\numberwithin{equation}{section}
\newcommand{\R}{\mathbb{R}}
\newcommand{\N}{\mathcal{N}}
\newcommand{\h}{\mathcal{H}_{\e}}
\newcommand{\J}{\mathcal{J}}
\newcommand{\I}{\mathcal{I}}
\newcommand{\B}{\mathcal{B}}
\newcommand{\p}{2^{*}_{s}}
\newcommand{\ri}{\rightarrow}
\DeclareMathOperator{\supp}{supp}
\DeclareMathOperator{\e}{\varepsilon}
\newtheorem{lem}{Lemma}[section]
\newtheorem{thm}{Theorem}[section]
\title[fractional critical Kirchhoff equations]{Concentrating solutions for a fractional Kirchhoff equation with critical growth}
\author[V. Ambrosio]{Vincenzo Ambrosio}
\address{Vincenzo Ambrosio\hfill\break\indent 
Dipartimento di Ingegneria Industriale e Scienze Matematiche \hfill\break\indent
Universit\`a Politecnica delle Marche\hfill\break\indent
Via Brecce Bianche, 12\hfill\break\indent
60131 Ancona (Italy)}
\email{v.ambrosio@univpm.it}
\keywords{fractional Kirchhoff equation; variational methods; critical growth}
\subjclass[2010]{47G20, 35R11, 35A15, 35B33}
\date{}
\begin{document}

\begin{abstract}
In this paper we consider the following class of fractional Kirchhoff equations with critical growth:
\begin{equation*}
\left\{
\begin{array}{ll}
\left(\e^{2s}a+\e^{4s-3}b\int_{\R^{3}}|(-\Delta)^{\frac{s}{2}}u|^{2}dx\right)(-\Delta)^{s}u+V(x)u=f(u)+|u|^{2^{*}_{s}-2}u \quad &\mbox{ in } \R^{3}, \\
u\in H^{s}(\R^{3}), \quad u>0 &\mbox{ in } \R^{3}, 
\end{array}
\right. 
\end{equation*}
where $\e>0$ is a small parameter, $a, b>0$ are constants, $s\in (\frac{3}{4}, 1)$, $2^{*}_{s}=\frac{6}{3-2s}$ is the fractional critical exponent, $(-\Delta)^{s}$ is the fractional Laplacian operator, $V$ is a positive continuous potential and $f$ is a superlinear continuous function with subcritical growth.
Using penalization techniques and variational methods, we prove the existence of a family of positive solutions $u_{\e}$ which concentrates around a local minimum of $V$ as $\e\ri 0$.
\end{abstract}
\maketitle

\section{Introduction}

This paper is devoted to the existence and concentration of positive solutions for the following fractional Kirchhoff type equation with critical nonlinearity:
\begin{equation}\label{P}
\left\{
\begin{array}{ll}
\left(\e^{2s}a+\e^{4s-3}b\int_{\R^{3}}|(-\Delta)^{\frac{s}{2}}u|^{2}dx\right)(-\Delta)^{s}u+V(x)u=f(u)+|u|^{2^{*}_{s}-2}u &\mbox{ in } \R^{3}, \\
u\in H^{s}(\R^{3}), \quad u>0 &\mbox{ in } \R^{3}, 
\end{array}
\right. 
\end{equation}
where $\e>0$ is a small parameter, $a, b>0$ are constants, $s\in (\frac{3}{4}, 1)$ is fixed,  $2^{*}_{s}=\frac{6}{3-2s}$ is the fractional critical exponent, and $(-\Delta)^{s}$ is the fractional Laplacian operator, which (up to normalization factors) may be defined for smooth functions $u:\R^{3}\rightarrow \R$ as
$$
(-\Delta)^{s}u(x)=-\frac{1}{2} \int_{\R^{3}} \frac{u(x+y)+u(x-y)-2u(x)}{|y|^{3+2s}} dy \quad (x\in \R^{3}),
$$
(see \cite{DPV, MBRS} and the references therein for further details and applications).\\
The potential $V:\R^{3}\rightarrow \R$ is a continuous function satisfying the following conditions introduced by del Pino and Felmer in \cite{DF}:
\begin{compactenum}[$(V_1)$]
\item $V_{1}:=\inf_{x\in \R^{3}} V(x)>0$,
\item there exists a bounded open set $\Lambda\subset \R^{3}$ such that
$$
0<V_{0}:=\inf_{\Lambda} V<\min_{\partial \Lambda} V,
$$
\end{compactenum}
while $f:\R\rightarrow \R$ is a continuous function fulfilling the following hypotheses:
\begin{compactenum}[$(f_1)$]
\item $f(t)=o(t^{3})$ as $t\rightarrow 0^{+}$,
\item there exist $q, \sigma\in (4, 2^{*}_{s})$, $C_{0}>0$ such that
$$
f(t)\geq C_{0} t^{q-1} \quad \forall t>0, \quad \lim_{t\rightarrow \infty} \frac{f(t)}{t^{\sigma-1}}=0,
$$
\item there exists $\vartheta\in (4, 2^{*}_{s})$ such that $0<\vartheta F(t)\leq t f(t)$ for all $t>0$,
\item the map $t\mapsto \frac{f(t)}{t^{3}}$ is increasing in $(0, \infty)$.
\end{compactenum}
Since we will look for positive solutions to \eqref{P}, we assume that $f(t)=0$ for $t\leq 0$.

We note that when $a=1$, $b=0$ and $\R^{3}$ is replaced by $\R^{N}$, then \eqref{P} reduces to a fractional Schr\"odinger equation of the type
\begin{equation}\label{FSE}
\e^{2s}(-\Delta)^{s}u+V(x)u=h(x, u) \mbox{ in } \R^{N},
\end{equation}
which has been introduced by Laskin \cite{Laskin1}  as a result of expanding the Feynman path integral, from the Brownian like to the L\'evy like quantum mechanical paths. Equation \eqref{FSE} has received a great interest by many mathematicians, and several results have been obtained under different and suitable assumptions on $V$ and $h$; see for instance \cite{Adie, AM, A1, A0, DDPW, DMV, FFV, FQT, I1, Secchi1, SZY} and the references therein. In particular way, the existence and concentration as $\e\ri 0$ of positive solutions to \eqref{FSE} has been widely investigated in recent years. For instance, D\'avila et al. \cite{DDPW} showed via Lyapunov-Schmidt reduction, that if the potential $V$ satisfies
$$
V\in C^{1, \alpha}(\R^{N})\cap L^{\infty}(\R^{N}) \mbox{ and } \inf_{x\in \R^{N}} V(x)>0,
$$
then (\ref{P}) has multi-peak  solutions.
Shang et al. \cite{SZY} used  Ljusternik-Schnirelmann theory to obtain multiple positive solutions for a fractional Schr\"odinger equation with critical growth
assuming that the potential $V: \R^{N}\rightarrow \R$ fulfills the following assumption proposed by Rabinowitz \cite{Rab}:
\begin{equation*}
V_{\infty}:=\liminf_{|x|\rightarrow \infty} V(x)>\inf_{x\in \R^{N}} V(x)=:V_{1}, \mbox{ where } V_{\infty}\in (0, \infty] \tag{V}.
\end{equation*}
Fall et al. \cite{FFV} established necessary and sufficient conditions on the smooth potential $V$ in order to produce concentration of solutions of (\ref{P}) when the parameter $\e$ converges to zero.  Moreover, when $V$ is coercive and has a unique global minimum, then ground-states concentrate at this point.
Alves and Miyagaki \cite{AM} (see also \cite{A1}) studied the existence and concentration of positive solutions to (\ref{P}), via a penalization approach, under assumptions $(V_1)$-$(V_2)$ and $f$ is a subcritical nonlinearity.

On the other hand, if we set $s=\e=1$ and we replace $f(u)+|u|^{\p-2}u$ by a more general nonlinearity $h(x, u)$, then \eqref{P} becomes the well-known classical Kirchhoff equation 
\begin{equation}\label{SKE}
-\left(a+b\int_{\R^{3}} |\nabla u|^{2}dx \right)\Delta u+V(x)u=h(x,u) \quad \mbox{ in } \R^{3},
\end{equation}
which is related to the stationary analogue of the Kirchhoff equation
\begin{equation}\label{KE}
\rho u_{tt} - \left( \frac{p_{0}}{h}+ \frac{E}{2L}\int_{0}^{L} |u_{x}|^{2} dx \right) u_{xx} =0,
\end{equation}
introduced by Kirchhoff \cite{Kir} in $1883$ as an extension of the classical D'Alembert's wave equation for describing  the transversal oscillations of a stretched string. Here  $L$ is the length of the string, $h$ is the area of the cross-section, $E$ is the young modulus (elastic modulus) of the material, $\rho$ is the mass density, and $p_{0}$ is the initial tension.
We refer to \cite{B, P} for the early classical studies dedicated to \eqref{KE}. We also note that nonlocal boundary value problems like \eqref{SKE} model several physical and biological systems where $u$ describes a process which depends on the average of itself, as for example, the population density; see \cite{ACM, ChL}.
However, only after the Lions' work \cite{LionsK}, where a functional analysis approach was proposed to attack a general Kirchhoff equation in arbitrary dimension with external force term, problem \eqref{SKE} began to catch the attention of several mathematicians; see \cite{ACF, CKW, FJ, HLP, HZ, WTXZ} and the references therein.
For instance, He and Zou \cite{HZ} obtained existence and multiplicity results for small $\e>0$ of the following perturbed Kirchhoff equation
\begin{align}\label{CKE}
-\left(a\varepsilon^{2}+b\varepsilon \int_{\R^{3}}|\nabla u|^{2} dx\right)\Delta u+V(x)u=g(u) \quad \mbox{ in } \mathbb{R}^{3},
\end{align} 
where the potential $V$ satisfies condition $(V)$ and $g$ is a subcritical nonlinearity. Wang et al. \cite{WTXZ} studied the multiplicity and concentration phenomenon for \eqref{CKE} when $g(u)=\lambda f(u)+|u|^{4}u$, $f$ is a continuous subcritical nonlinearity and $\lambda$ is large. Figueiredo and Santos Junior \cite{FJ} used the generalized Nehari manifold method to obtain a multiplicity result for a subcritical Kirchhoff equation under conditions $(V_1)$-$(V_2)$.
He et al. \cite{HLP} dealt with the existence and multiplicity of solutions to \eqref{CKE}, where $g(u)=f(u)+u^{5}$, $f\in C^{1}$ is a subcritical nonlinearity which does not satisfies the Ambrosetti-Rabinowitz condition \cite{AR} and $V$ fulfills $(V_1)$-$(V_2)$.

In the nonlocal framework, Fiscella and Valdinoci \cite{FV}  proposed for the first time a stationary fractional Kirchhoff variational model in a bounded domain $\Omega\subset \R^{N}$ with homogeneous Dirichlet boundary conditions and involving a critical nonlinearity:
\begin{align}\label{FKE}
\left\{
\begin{array}{ll}
M\left(\int_{\R^{N}}|(-\Delta)^{\frac{s}{2}}u|^{2}dx\right)(-\Delta)^{s}u=\lambda f(x, u)+|u|^{\p-2}u \quad &\mbox{ in } \Omega,\\
u=0 &\mbox{ in } \R^{N}\setminus \Omega, 
\end{array}
\right. 
\end{align}
where $M$ is a continuous Kirchhoff function whose model case is given by $M(t)=a+bt$.  
Their model takes care of the nonlocal aspect of the tension arising from nonlocal measurements of the fractional length of the string; see \cite{FV} for more details.
After the pioneering work \cite{FV}, several authors dealt with existence and multiplicity of solutions for \eqref{FKE}; see \cite{AFP, FMBS, MBRS, Ny} and their references. On the other hand, some interesting results for fractional Kirchhoff equations in $\R^{N}$ have been established in \cite{AI1, AI2, FP, LSZ, MRZ, PuSa, PXZ}.
For instance, Pucci and Saldi \cite{PuSa} obtained the existence and multiplicity of nontrivial solutions for a Kirchhoff type eigenvalue problem in $\R^N$ involving a critical nonlinearity.
Fiscella and Pucci \cite{FP}  dealt with stationary fractional Kirchhoff $p$-Laplacian equations involving critical Hardy-Sobolev nonlinearities and nonnegative potentials.
In \cite{AI1} a multiplicity result for a fractional Kirchhoff equation involving a Beresticky-Lions type nonlinearity is proved.
The author and Isernia \cite{AI2} used penalization method and  Lusternik-Schnirelmann category theory to study the existence and multiplicity of solutions for a fractional Schr\"odinger-Kirchhoff equation with subcritical nonlinearities; 
see also \cite{HZm} in which the authors used the approach in \cite{AI2} to consider a subcritical version of \eqref{P}.
Liu et al. \cite{LSZ}, via the monotonicity trick and the profile decomposition, proved the existence of ground states to a fractional Kirchhoff equation with critical nonlinearity in low dimension.

Motivated by the above works, in this paper we aim to study the existence and concentration behavior of solutions to \eqref{P} under assumptions $(V_1)$-$(V_2)$ and $(f_1)$-$(f_4)$. 
More precisely, our main result can be stated as follows:
\begin{thm}\label{thm1}
Assume that $(V_1)$-$(V_2)$ and $(f_1)$-$(f_4)$ hold. Then, 
there exists $\e_{0}>0$ such that, for each  $\e\in (0, \e_{0})$, problem \eqref{P} has a positive solution $u_{\e}$. Moreover, if $\eta_{\e}$ denotes a global maximum point of $u_{\e}$, then we have
$$
\lim_{\e\rightarrow 0} V(\eta_{\e})=V_{0},
$$
and there exists a constant $C >0$ such that
$$
0<u_{\varepsilon}(x)\leq \frac{C \varepsilon^{3+2s}}{\varepsilon^{3+2s}+ |x-\eta_{\varepsilon}|^{3+2s}} 
\quad \mbox{ for all } x\in \R^{3}. 
$$
\end{thm}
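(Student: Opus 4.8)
The plan is to follow the by-now classical penalization scheme of del Pino and Felmer, adapted to the fractional Kirchhoff setting, in several stages. First I would rescale: setting $v(x)=u(\e x)$ transforms \eqref{P} into an equation of the form $\bigl(a+b[v]_s^2\bigr)(-\Delta)^s v + V(\e x)v = f(v)+|v|^{\p-2}v$ on $\R^3$, where the nonlocal coefficient is now dimensionless; here one must use $s\in(\tfrac34,1)$ precisely so that the Kirchhoff term scales consistently and $\p=\frac{6}{3-2s}$ makes sense with $\vartheta,q,\sigma\in(4,\p)$ nonempty. Next I would introduce the penalized nonlinearity: fix $k>\frac{\vartheta}{\vartheta-4}$ (or a similar threshold tied to the Kirchhoff exponent $4$), set $\tilde f(x,t)=\chi_\Lambda(x)\,\bigl(f(t)+t_+^{\p-1}\bigr)+(1-\chi_\Lambda(x))\,\min\{f(t)+t_+^{\p-1},\tfrac{V_1}{k}t\}$, and study the modified problem whose solutions that happen to be small outside $\Lambda/\e$ are genuine solutions of \eqref{P}.

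The heart of the argument is the variational analysis of the penalized functional $\J_\e(v)=\frac{a}{2}[v]_s^2+\frac{b}{4}[v]_s^4+\frac12\int V(\e x)v^2 - \int \tilde F(\e x,v)$ on $H^s(\R^3)$. I would verify the mountain pass geometry (using $(f_1)$ for the local minimum at $0$ and $(f_3)$, together with the penalization structure outside $\Lambda$, for the ``mountains''), and then establish a local Palais–Smale condition below a critical threshold. This is where the critical exponent bites: I would compare the mountain pass level $c_\e$ with the level associated to the limit autonomous Kirchhoff problem and show $c_\e < $ some explicit constant involving the best fractional Sobolev constant $S_s$ (the analogue of $\frac{s}{3}S_s^{3/(2s)}$, corrected by the Kirchhoff coefficients $a,b$), using suitable truncated Talenti-type extremal functions as test functions. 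Below that threshold, any PS sequence is relatively compact: the nonlocal coefficient $[v_n]_s^2\to t_0$ converges along a subsequence, reducing the problem to a Schrödinger-type equation with the concentration-compactness dichotomy ruled out by the penalization outside $\Lambda$ and the sub-threshold energy inside. The main obstacle, as usual, is exactly this step — obtaining the strict energy inequality in the presence of both the Kirchhoff term (which raises the homogeneity and complicates the expansion of $\J_\e$ along test functions) and the fractional nonlocality (where the extremal functions are only known up to sharp decay, not in closed elementary form) — and keeping the threshold uniform in small $\e$.

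Once a solution $v_\e$ of the penalized problem exists for $\e$ small, I would prove the decay and concentration estimates. First, a Moser-type iteration (exploiting $s>\tfrac34$ so that $H^s\hookrightarrow L^\infty$ is within reach after bootstrapping, or rather a De Giorgi–Nash–Moser argument for the fractional operator with the $L^{\p}$-bound controlled uniformly) gives $\|v_\e\|_{L^\infty}\le C$ uniformly, and then a careful argument shows $v_\e\to 0$ uniformly outside small balls, so that the penalization is inactive and $v_\e$ solves the original equation. The location of the maximum point follows by a concentration argument: rescaling around the maximum $y_\e$ of $v_\e$, the limit profile solves the autonomous problem with potential $V(x_0)$ where $x_0=\lim \e y_\e$, and a comparison of energies forces $x_0$ to be a minimum of $V$ in $\overline\Lambda$, hence $V(x_0)=V_0$; undoing the scaling, $\eta_\e=\e y_\e$ and $V(\eta_\e)\to V_0$.

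Finally, the pointwise bound $0<u_\e(x)\le \dfrac{C\e^{3+2s}}{\e^{3+2s}+|x-\eta_\e|^{3+2s}}$ is obtained by constructing a suitable comparison (barrier) function. One builds $W(x)=\dfrac{\kappa}{1+|x|^{3+2s}}$ and checks, using the known estimate $(-\Delta)^s(1+|x|^{3+2s})^{-1}\le C(1+|x|)^{-(3+2s)-2s}\le c\,(1+|x|^{3+2s})^{-1}$ on the region where $v_\e$ is small, that $W$ is a supersolution of the linear inequality $\bigl(a+b[v_\e]_s^2\bigr)(-\Delta)^s W + \tfrac{V_1}{2}W\ge 0$ satisfied by $v_\e$ outside a large ball (where $f(v_\e)+v_\e^{\p-1}\le \tfrac{V_1}{2}v_\e$ by smallness); then a maximum principle for the fractional operator gives $v_\e\le C W$ on all of $\R^3$ after adjusting constants on the compact part. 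Translating and rescaling back to $u_\e(x)=v_\e(x/\e)$ and $\eta_\e=\e y_\e$ yields the stated estimate. The positivity $u_\e>0$ comes from the strong maximum principle for $(-\Delta)^s$ applied to the nonnegative solution produced by the mountain pass (one works with $\tilde f$ extended by $0$ for $t\le 0$, so $v_\e\ge 0$, and then $v_\e\not\equiv 0$ plus the maximum principle gives $v_\e>0$).
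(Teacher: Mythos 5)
Your proposal follows essentially the same strategy as the paper: del Pino--Felmer penalization after $\e$-rescaling, mountain-pass geometry for the modified functional, a threshold estimate using truncated Talenti extremals to get a strict sub-critical energy bound $c_\e < c_*$ (the constant $c_*$ in the paper is the maximum of $K(t)=\frac{aS_*}{2}t^{3-2s}+\frac{bS_*^2}{4}t^{6-4s}-\frac{1}{2^*_s}t^3$, exactly the Kirchhoff correction you anticipated), passage to the limit of the Kirchhoff coefficient $[v_n]_s^2\to B^2$ reducing compactness to a Schr\"odinger-type analysis, a Moser iteration for the uniform $L^\infty$ bound, and a barrier of the form $\kappa(1+|x|^{3+2s})^{-1}$ for the polynomial decay and localization of maxima. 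Your penalization threshold is written as $k>\frac{\vartheta}{\vartheta-4}$ where the paper takes $K>\frac{2\vartheta}{\vartheta-2}$, and the $\min$-formula for $\tilde f$ reproduces the paper's truncation at $a_0$ by monotonicity of $t\mapsto (f(t)+t^{2^*_s-1})/t$; these are cosmetic. One minor slip: $H^s(\R^3)$ for $s<3/2$ does not embed into $L^\infty$, so the ``bootstrapping from $H^s\hookrightarrow L^\infty$'' option you list first is not available, but your fallback (De Giorgi--Nash--Moser/Moser iteration) is precisely what the paper does. You also leave implicit the step showing $B^2=[\tilde u]_s^2$ so that the weak limit actually solves the Kirchhoff equation (the paper handles this via the Nehari comparison in the proof of Theorem~\ref{GSfcK}), but this is a standard device in the Kirchhoff literature and is consistent with the route you describe.
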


The proof of Theorem \ref{thm1} will be done via appropriate variational arguments. After considering the $\e$-rescaled problem associated with \eqref{P}, 
we use a variant of the penalization technique introduced in \cite{DF} (see also \cite{ADOS, FF}) which  consists in modifying in a suitable way the nonlinearity outside $\Lambda$, solving a modified problem and then check that, for $\e>0$ small enough, the solutions of the modified problem are indeed solutions of the original one. These solutions will be obtained as critical points of the modified energy functional $\J_{\e}$ which, in view of the growth assumptions on $f$ and the auxiliary nonlinearity, possesses a mountain pass geometry \cite{AR}. In order to recover some compactness properties for $\J_{\e}$, we have to circumvent several difficulties which make our study rather delicate. The first one is related to the presence of the Kirchhoff term in \eqref{P} which does not permit to verify in a standard way that if $u$ is the weak limit of a Palais-Smale sequence ($(PS)$ in short) $\{u_{n}\}_{n\in \mathbb{N}}$ for $\J_{\e}$, then $u$ is a weak solution for the modified problem. 
The second one is due to the lack of compactness caused by  the unboundedness of the domain $\R^{3}$ and the critical Sobolev exponent.
Anyway, we will be able to overcome these problems looking for critical points of a suitable functional whose quadratic part involves the limit term of $(a+b[u_{n}]^{2}_{s})$, and showing that the mountain pass level $c_{\e}$ of $\J_{\e}$ is strictly less than a  threshold value related to the best constant of the embedding $H^{s}(\R^{3})$ in $L^{\p}(\R^{3})$.
Then, applying mountain pass lemma, we will deduce the existence of a positive solution for the modified problem.
Finally, combining a compactness argument with a Moser iteration procedure \cite{Moser}, we prove that the solution of the modified problem is also a solution to the original one for $\e>0$ small enough, and that it decays at zero at infinity with polynomial rate.
To our knowledge, this is the first time that concentration phenomenon for problem \eqref{P} is investigated in the literature.

\noindent
The paper is organized as follows: in Section $2$ we introduce the modified problem and we provide some technical results. In Section $3$ we give the proof of Theorem \ref{thm1}.

\section{The modified problem}

\subsection{Preliminaries}\hfill\\
\noindent
Here we fix the notations and we recall some useful preliminary results on fractional Sobolev spaces  (see also \cite{DPV, MBRS} for more details).\\
If $A\subset \R^{3}$, we denote by $|u|_{L^{q}(A)}$ the $L^{q}(A)$-norm of a function $u:\R^{3}\rightarrow \R$, and by $|u|_{q}$ its $L^{q}(\R^{3})$-norm. We denote by $\B_{r}(x)$ the ball centered at $x\in \R^{3}$ with radius $r>0$. When $x=0$, we put $\B_{r}=\B_{r}(0)$.
Let us define $\mathcal{D}^{s,2}(\R^{3})$ as the completion of $C^{\infty}_{c}(\R^{3})$ with respect to the norm
$$
[u]^{2}_{s}:=\iint_{\R^{6}} \frac{|u(x)-u(y)|^{2}}{|x-y|^{3+2s}} dx dy =\int_{\R^{3}}|(-\Delta)^{\frac{s}{2}}u|^{2}dx,
$$
where the second identity holds up to a constant; see \cite{DPV}.
Then we consider the fractional Sobolev space
$$
H^{s}(\R^{3}):=\Bigl\{u\in L^{2}(\R^{3}): [u]_{s}<\infty\Bigr\}
$$
endowed with the norm
$$
\|u\|^{2}:=[u]^{2}_{s}+|u|_{2}^{2}.
$$
We recall the following main embeddings for the fractional Sobolev spaces:
\begin{thm}\cite{DPV}\label{Sembedding}
Let $s\in (0,1)$. Then there exists a sharp constant $S_{*}=S_{*}(s)>0$ such that for any $u\in \mathcal{D}^{s,2}(\R^{3})$
\begin{equation*}
|u|^{2}_{\p} \leq S_{*}^{-1}[u]^{2}_{s}.
\end{equation*}
Moreover, $H^{s}(\R^{3})$ is continuously embedded in $L^{p}(\R^{3})$ for any $p\in [2, 2^{*}_{s}]$ and compactly in $L^{p}_{loc}(\R^{3})$ for any $p\in [1, 2^{*}_{s})$. 
\end{thm}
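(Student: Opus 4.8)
The statement splits into three parts, which I would handle in order: the fractional Sobolev inequality (which simultaneously yields the sharp constant $S_{*}$ and the bound $S_{*}>0$), then the continuous embeddings on all of $\R^{3}$, then the local compactness; throughout one uses freely that, up to a positive normalization constant, $[u]^{2}_{s}=\int_{\R^{3}}|\xi|^{2s}|\widehat{u}(\xi)|^{2}\,d\xi$. For the inequality it suffices to prove $|u|_{\p}\leq C(s)\,[u]_{s}$ for $u\in C^{\infty}_{c}(\R^{3})$ and then extend to $\mathcal{D}^{s,2}(\R^{3})$ by density. I would set $v:=(-\Delta)^{\frac{s}{2}}u\in L^{2}(\R^{3})$, so that $|v|_{2}=[u]_{s}$ and $u=(-\Delta)^{-\frac{s}{2}}v$ is the Riesz potential of order $s$ of $v$, and apply the Hardy--Littlewood--Sobolev inequality with the exponent pair $(2,\p)$; this pair is admissible because $\frac{1}{\p}=\frac{1}{2}-\frac{s}{3}$ (equivalently $\p=\frac{6}{3-2s}$, with $0<s<3$), and it gives $|u|_{\p}\leq C|v|_{2}=C[u]_{s}$. (Alternatively one may use the purely elementary dyadic estimate of the Gagliardo seminorm from \cite{DPV}.) One then defines
\[
S_{*}:=\inf\Bigl\{[u]^{2}_{s} : u\in \mathcal{D}^{s,2}(\R^{3}),\ |u|_{\p}=1\Bigr\},
\]
so that the inequality just established forces $S_{*}>0$ and $S_{*}$ is, by construction, the best (sharp) constant for which $|u|^{2}_{\p}\leq S_{*}^{-1}[u]^{2}_{s}$ holds on $\mathcal{D}^{s,2}(\R^{3})$. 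For the continuous embeddings: if $u\in H^{s}(\R^{3})$ then $|u|_{2}\leq\|u\|$ and $[u]_{s}\leq\|u\|$, hence $|u|_{\p}\leq S_{*}^{-1/2}\|u\|$, which together with the trivial case $p=2$ covers the two endpoints; for $p\in(2,\p)$ one picks $\theta\in(0,1)$ with $\frac{1}{p}=\frac{1-\theta}{2}+\frac{\theta}{\p}$ and applies the interpolation inequality $|u|_{p}\leq|u|_{2}^{1-\theta}|u|_{\p}^{\theta}$ to get $|u|_{p}\leq C\|u\|$.

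For the local compactness, let $\Omega\subset\R^{3}$ be bounded and let $\{u_{n}\}$ be bounded in $H^{s}(\R^{3})$. The key estimate is equicontinuity of translations in $L^{2}$: using Plancherel and the elementary bound $2(1-\cos t)\leq 2^{2-2s}|t|^{2s}$, valid for every $t\in\R$ since $0<s<1$, one gets, for all $h\in\R^{3}$,
\[
|u_{n}(\cdot+h)-u_{n}|_{2}^{2}\leq C\int_{\R^{3}}2\bigl(1-\cos(\xi\cdot h)\bigr)|\widehat{u_{n}}(\xi)|^{2}\,d\xi\leq C|h|^{2s}[u_{n}]^{2}_{s}\leq C|h|^{2s}.
\]
Fixing $\chi\in C^{\infty}_{c}(\R^{3})$ with $\chi\equiv1$ on $\Omega$, the functions $\chi u_{n}$ are supported in one fixed compact set, uniformly bounded in $L^{2}$, and, by the estimate above and the smoothness of $\chi$, satisfy $\sup_{n}|(\chi u_{n})(\cdot+h)-\chi u_{n}|_{2}\ri0$ as $|h|\ri0$; the Riesz--Fr\'echet--Kolmogorov criterion then produces a subsequence of $\{u_{n}\}$ that converges in $L^{2}(\Omega)$. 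For $p\in[1,2)$, H\"older's inequality on the bounded set $\Omega$ gives $|u_{n}-u_{m}|_{L^{p}(\Omega)}\leq|\Omega|^{\frac{1}{p}-\frac{1}{2}}|u_{n}-u_{m}|_{L^{2}(\Omega)}$, and for $p\in(2,\p)$ interpolation together with the uniform bound $|u_{n}|_{\p}\leq C$ coming from the Sobolev inequality gives $|u_{n}-u_{m}|_{L^{p}(\Omega)}\leq|u_{n}-u_{m}|_{L^{2}(\Omega)}^{1-\theta}(2C)^{\theta}$; in both cases the subsequence is Cauchy in $L^{p}(\Omega)$, and a diagonal argument over the exhaustion $\R^{3}=\bigcup_{k}\B_{k}$ upgrades this to convergence in $L^{p}_{loc}(\R^{3})$.

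The only genuinely nontrivial ingredient is the Sobolev inequality of the first step, and even there obtaining merely a positive constant is immediate from Hardy--Littlewood--Sobolev; everything else is soft functional analysis. The hard part would appear only if one insisted on computing $S_{*}$ explicitly and describing its extremals, because then one must deal with the non-compactness of minimizing sequences caused by the invariance of the quotient $[u]^{2}_{s}/|u|^{2}_{\p}$ under dilations and translations, which is handled by symmetric rearrangement and a conformal change of variables (or by concentration--compactness); this, however, is not needed for the statement. In the compactness step the only mild technical point is the localization via the cut-off $\chi$ required to apply the Riesz--Kolmogorov criterion on the merely bounded set $\Omega$.
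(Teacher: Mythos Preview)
The paper does not supply its own proof of this theorem: it is stated with the citation \cite{DPV} and used as a standard tool, so there is no argument in the paper to compare against. Your proof is correct and is essentially the classical route one finds in the cited reference: Hardy--Littlewood--Sobolev (via the Riesz potential representation $u=(-\Delta)^{-s/2}v$) for the critical inequality, interpolation for the intermediate exponents, and the Riesz--Fr\'echet--Kolmogorov criterion fed by the Fourier-side translation estimate for local compactness. One very minor remark: in the cut-off step you should also note that $|(\chi u_{n})(\cdot+h)-\chi u_{n}|_{2}\leq |\chi|_{\infty}|u_{n}(\cdot+h)-u_{n}|_{2}+|\chi(\cdot+h)-\chi|_{\infty}|u_{n}|_{2}$, so that the equicontinuity of $\chi u_{n}$ follows from your $|h|^{s}$ bound together with the Lipschitz continuity of $\chi$; you allude to this (``by the smoothness of $\chi$'') but it is worth making the splitting explicit.
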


\noindent
The following lemma is a version of the well-known Lions type result:
\begin{lem}\cite{FQT}\label{Lions}
If $\{u_{n}\}_{n\in \mathbb{N}}$ is a bounded sequence in $H^{s}(\R^{3})$ and if
$$
\lim_{n \rightarrow \infty} \sup_{y\in \R^{3}} \int_{B_{R}(y)} |u_{n}|^{2} dx=0
$$
for some $R>0$, then $u_{n}\rightarrow 0$ in $L^{r}(\R^{3})$ for all $r\in (2, 2^{*}_{s})$.
\end{lem}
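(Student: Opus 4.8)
The plan is to adapt the classical Lions vanishing lemma to the fractional setting, combining the interpolation inequality for Lebesgue norms, the fractional Sobolev embedding on balls, and a covering of $\R^{3}$ by balls with bounded overlap. First I would reduce the statement to proving that $u_{n}\ri 0$ in $L^{q_{0}}(\R^{3})$ for a single conveniently chosen exponent $q_{0}\in (2,\p)$. Indeed, by Theorem \ref{Sembedding} the sequence $\{u_{n}\}$ is bounded in $L^{2}(\R^{3})\cap L^{\p}(\R^{3})$, so once convergence in $L^{q_{0}}$ is known, any $r\in (2,\p)$ can be written as an interpolation exponent between $2$ and $q_{0}$ (when $r\le q_{0}$) or between $q_{0}$ and $\p$ (when $r\ge q_{0}$), and the elementary inequality $|u_{n}|_{r}\le |u_{n}|_{p_{1}}^{1-t}\,|u_{n}|_{p_{2}}^{t}$ with $t\in (0,1)$ forces $|u_{n}|_{r}\ri 0$.

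For the core step I would fix a countable covering $\{B_{R}(y_{i})\}_{i}$ of $\R^{3}$ (with the $y_{i}$ on a suitably scaled lattice, say) such that every point of $\R^{3}$ belongs to at most $N_{0}=N_{0}(R)\in \mathbb{N}$ of the balls, and choose $q_{0}\in (2,\p)$ so close to $\p$ that the interpolation exponent $\lambda \in (0,1)$ determined by $\frac{1}{q_{0}}=\frac{1-\lambda}{2}+\frac{\lambda}{\p}$ satisfies $\lambda q_{0}>2$ (possible since $\lambda q_{0}=\frac{\p(q_{0}-2)}{\p-2}\to \p>2$ as $q_{0}\uparrow \p$). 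On each ball, the interpolation inequality $|u_{n}|_{L^{q_{0}}(B_{R}(y_{i}))}\le |u_{n}|^{1-\lambda}_{L^{2}(B_{R}(y_{i}))}\,|u_{n}|^{\lambda}_{L^{\p}(B_{R}(y_{i}))}$ together with the fractional Sobolev embedding $H^{s}(B_{R}(y_{i}))\hookrightarrow L^{\p}(B_{R}(y_{i}))$ — whose constant does not depend on $i$ since the balls are congruent — would give, raising to the power $q_{0}$,
$$
\int_{B_{R}(y_{i})}|u_{n}|^{q_{0}}\,dx\le C\Bigl(\int_{B_{R}(y_{i})}|u_{n}|^{2}\,dx\Bigr)^{\frac{(1-\lambda)q_{0}}{2}}\|u_{n}\|^{\lambda q_{0}}_{H^{s}(B_{R}(y_{i}))},
$$
where we set $\|u_{n}\|^{2}_{H^{s}(B_{R}(y_{i}))}:=|u_{n}|^{2}_{L^{2}(B_{R}(y_{i}))}+\iint_{B_{R}(y_{i})\times B_{R}(y_{i})}\frac{|u_{n}(x)-u_{n}(z)|^{2}}{|x-z|^{3+2s}}\,dx\,dz\le \|u_{n}\|^{2}$. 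Since $\lambda q_{0}-2>0$ and $\|u_{n}\|_{H^{s}(B_{R}(y_{i}))}\le \|u_{n}\|\le C$, the excess power is absorbed, $\|u_{n}\|^{\lambda q_{0}}_{H^{s}(B_{R}(y_{i}))}\le C\,\|u_{n}\|^{2}_{H^{s}(B_{R}(y_{i}))}$, while $\bigl(\int_{B_{R}(y_{i})}|u_{n}|^{2}\bigr)^{(1-\lambda)q_{0}/2}\le \delta_{n}^{(1-\lambda)q_{0}/2}$, where $\delta_{n}:=\sup_{y\in \R^{3}}\int_{B_{R}(y)}|u_{n}|^{2}\,dx\ri 0$ by hypothesis.

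Summing over $i$ and invoking the bounded overlap — which gives $\sum_{i}|u_{n}|^{2}_{L^{2}(B_{R}(y_{i}))}\le N_{0}|u_{n}|^{2}_{2}$ and $\sum_{i}\iint_{B_{R}(y_{i})\times B_{R}(y_{i})}\frac{|u_{n}(x)-u_{n}(z)|^{2}}{|x-z|^{3+2s}}\,dx\,dz\le N_{0}[u_{n}]^{2}_{s}$, because a pair $(x,z)$ lies in $B_{R}(y_{i})\times B_{R}(y_{i})$ for at most $N_{0}$ indices $i$ — I would conclude
$$
\int_{\R^{3}}|u_{n}|^{q_{0}}\,dx\le \sum_{i}\int_{B_{R}(y_{i})}|u_{n}|^{q_{0}}\,dx\le C\,\delta_{n}^{(1-\lambda)q_{0}/2}\sum_{i}\|u_{n}\|^{2}_{H^{s}(B_{R}(y_{i}))}\le C\,N_{0}\,\|u_{n}\|^{2}\,\delta_{n}^{(1-\lambda)q_{0}/2}\ri 0,
$$
which yields $u_{n}\ri 0$ in $L^{q_{0}}(\R^{3})$ and hence, by the reduction, in $L^{r}(\R^{3})$ for every $r\in (2,\p)$. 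I expect the only genuinely delicate points to be the uniformity in $i$ of the constant in the local fractional Sobolev embedding (harmless, since the balls are translates of one another) and the compatibility of the Gagliardo seminorms with the covering, i.e. $\sum_{i}[u_{n}]^{2}_{H^{s}(B_{R}(y_{i}))}\le N_{0}[u_{n}]^{2}_{s}$; beyond that the argument is the same bookkeeping as in the local case, and selecting $q_{0}$ with $\lambda q_{0}>2$ from the start is precisely what lets the summation close without wasting the smallness of $\delta_{n}$.
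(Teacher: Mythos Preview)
The paper does not supply its own proof of this lemma: it is stated with the citation \cite{FQT} and used as a black box throughout. Your argument is the standard adaptation of Lions' vanishing lemma to the fractional setting --- interpolation on each ball of a bounded-overlap covering, the local embedding $H^{s}(B_{R})\hookrightarrow L^{2^{*}_{s}}(B_{R})$ with a translation-invariant constant, and the choice of $q_{0}$ with $\lambda q_{0}>2$ so that the excess power can be absorbed by the global $H^{s}$-bound before summing --- and it is correct; this is essentially the argument in the cited reference, so there is nothing to contrast.
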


\noindent
We also recall the following useful technical result.
\begin{lem}\cite{PP}\label{pp}
Let $u\in \mathcal{D}^{s, 2}(\R^{3})$. Let $\varphi\in C^{\infty}_{c}(\R^{3})$ and for each $r>0$ we define $\varphi_{r}(x)=\varphi(x/r)$. Then, $[u \varphi_{r}]_{s} \rightarrow 0$ as $r\rightarrow 0$.
If in addition $\varphi=1$ in a neighborhood of the origin, then
$[u \varphi_{r}]_{s} \rightarrow [u]_{s}$ as $r\rightarrow \infty$.
\end{lem}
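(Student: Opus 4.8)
The plan is to reduce the whole estimate to the elementary product splitting
\[
u(x)\varphi_r(x)-u(y)\varphi_r(y)=\varphi_r(x)\bigl(u(x)-u(y)\bigr)+u(y)\bigl(\varphi_r(x)-\varphi_r(y)\bigr),
\]
which, combined with $(A+B)^{2}\le 2A^{2}+2B^{2}$, gives $[u\varphi_r]^{2}_{s}\le 2A_r+2B_r$ with
\[
A_r:=\iint_{\R^{6}}\frac{\varphi_r(x)^{2}\,|u(x)-u(y)|^{2}}{|x-y|^{3+2s}}\,dx\,dy,\qquad
B_r:=\iint_{\R^{6}}\frac{u(y)^{2}\,|\varphi_r(x)-\varphi_r(y)|^{2}}{|x-y|^{3+2s}}\,dx\,dy .
\]
Since $0\le\varphi_r^{2}\le|\varphi|_{\infty}^{2}$ and $u\in\mathcal D^{s,2}(\R^{3})$, the integrand defining $A_r$ is dominated by the $L^{1}(\R^{6})$ function $|\varphi|_{\infty}^{2}|u(x)-u(y)|^{2}|x-y|^{-3-2s}$, while $\varphi_r(x)^{2}\to0$ for every $x\neq0$ as $r\to0$ because $\varphi$ has compact support; by dominated convergence $A_r\to0$ as $r\to0$. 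If moreover $\varphi\equiv1$ on a ball $\B_{\delta_0}$, then $\varphi_r(x)\to1$ for every $x$ as $r\to\infty$, and the same argument applied with $1-\varphi_r$ in place of $\varphi_r$ shows that $\iint_{\R^{6}}(1-\varphi_r(x))^{2}|u(x)-u(y)|^{2}|x-y|^{-3-2s}\,dx\,dy\to0$ as $r\to\infty$.

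The main work is the cross term $B_r$. Setting $g(\eta):=\int_{\R^{3}}|\varphi(\xi)-\varphi(\eta)|^{2}|\xi-\eta|^{-3-2s}\,d\xi$, the substitution $x=r\xi$ in the inner integral produces the scaling identity $B_r=\int_{\R^{3}}u(y)^{2}\,r^{-2s}g(y/r)\,dy$. The first step is to check $g\in L^{1}(\R^{3})\cap L^{\infty}(\R^{3})$: the uniform bound follows by splitting at $|\xi-\eta|=1$, using the Lipschitz estimate $|\varphi(\xi)-\varphi(\eta)|\le|\nabla\varphi|_{\infty}|\xi-\eta|$ near the diagonal — so the integrand is $\lesssim|\xi-\eta|^{-1-2s}$, locally integrable since $s<1$ — and $|\varphi(\xi)-\varphi(\eta)|\le 2|\varphi|_{\infty}$ away from it; while $\int_{\R^{3}}g=[\varphi]^{2}_{s}<\infty$ by Tonelli. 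In particular $g\in L^{3/(2s)}(\R^{3})$, and the obvious scalings give $|r^{-2s}g(\cdot/r)|_{1}=r^{3-2s}[\varphi]^{2}_{s}$, $|r^{-2s}g(\cdot/r)|_{3/(2s)}=|g|_{3/(2s)}$ and $|r^{-2s}g(\cdot/r)|_{\infty}=r^{-2s}|g|_{\infty}$. To prove $B_r\to0$ I would truncate $u$ at a level $K$ (and, in the regime $r\to\infty$, split the remaining bounded piece once more at a small level $\delta$): on $\{|u|>K\}$ one bounds $B_r$ via H\"older with the conjugate pair $(\p/2,\,3/(2s))$ and the Sobolev embedding $u\in L^{\p}(\R^{3})$ of Theorem~\ref{Sembedding}, obtaining a bound $|g|_{3/(2s)}\bigl(\int_{\{|u|>K\}}|u|^{\p}\bigr)^{2/\p}$ that is small uniformly in $r$ once $K$ is large; on the bounded part one uses, for $K$ (and $\delta$) fixed, that $|r^{-2s}g(\cdot/r)|_{1}\to0$ as $r\to0$, respectively that $|r^{-2s}g(\cdot/r)|_{\infty}\to0$ as $r\to\infty$ together with $|\{|u|>\delta\}|<\infty$ (Chebyshev and $u\in L^{\p}$). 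Sending first $r$ and then $K$ (and $\delta$) to their limits yields $B_r\to0$, both as $r\to0$ and as $r\to\infty$.

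It remains to combine the two estimates. First, $[u\varphi_r]^{2}_{s}\le 2A_r+2B_r\to0$ as $r\to0$, which is the first assertion. For the second, assume $\varphi\equiv1$ near the origin and write $u\varphi_r=u-u(1-\varphi_r)$; since $(1-\varphi_r(x))-(1-\varphi_r(y))=-(\varphi_r(x)-\varphi_r(y))$, the product splitting applied to $u(1-\varphi_r)$ yields exactly the same cross term $B_r$, so by the estimates above $[u(1-\varphi_r)]^{2}_{s}\to0$ as $r\to\infty$, whence $\limsup_{r\to\infty}[u\varphi_r]_{s}\le[u]_{s}$ by the triangle inequality for $[\cdot]_{s}$. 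The reverse bound $\liminf_{r\to\infty}[u\varphi_r]_{s}\ge[u]_{s}$ is Fatou's lemma applied to the double integral defining $[u\varphi_r]^{2}_{s}$, since $u\varphi_r\to u$ a.e. in $\R^{3}$; together these give $[u\varphi_r]_{s}\to[u]_{s}$. The only genuine obstacle is the term $B_r$: because $u$ belongs to $L^{\p}(\R^{3})$ and not to $L^{2}(\R^{3})$, it cannot be absorbed by a crude pointwise bound, and one really needs the scaling identity, the $L^{1}\cap L^{\infty}$ control of $g$, and the truncation of $u$ adapted to the critical exponent $\p$; the remaining ingredients — the splitting, the two uses of dominated convergence, and Fatou — are routine.
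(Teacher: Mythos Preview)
The paper does not prove this lemma; it is quoted from \cite{PP} and used as a black box, so there is no in-paper argument to compare against. Your proof is the standard one and is essentially correct: the product splitting, the dominated-convergence control of $A_r$, the scaling identity $B_r=\int_{\R^3} u(y)^{2}\,r^{-2s}g(y/r)\,dy$, the fact that $g\in L^{1}\cap L^{\infty}$ (hence $g\in L^{3/(2s)}$ with scale-invariant norm), and the Fatou/triangle-inequality finish for $r\to\infty$ are all right.

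There is one small hole in the treatment of $B_r$ as $r\to\infty$. You split $\{|u|\le K\}$ further at a level $\delta$ and handle $\{\delta<|u|\le K\}$ via $|r^{-2s}g(\cdot/r)|_{\infty}\to0$ together with $|\{|u|>\delta\}|<\infty$; but you never say what controls the remaining piece $\{|u|\le\delta\}$, and ``sending $\delta$ to its limit'' does nothing unless that piece has already been made small. The fix is immediate with the very tool you used on $\{|u|>K\}$:
\[
\int_{\{|u|\le\delta\}} u^{2}\,r^{-2s}g(y/r)\,dy
\;\le\; |g|_{3/(2s)}\Bigl(\int_{\{|u|\le\delta\}}|u|^{2^{*}_{s}}\Bigr)^{2/2^{*}_{s}},
\]
and the right-hand side tends to $0$ as $\delta\to0$ by dominated convergence, since $\chi_{\{|u|\le\delta\}}|u|^{2^{*}_{s}}\to0$ a.e.\ and is dominated by $|u|^{2^{*}_{s}}\in L^{1}$. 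Alternatively, you can avoid the three-way split altogether by a density argument: approximate $u$ in $L^{2^{*}_{s}}(\R^{3})$ by $v\in C^{\infty}_{c}(\R^{3})$, note that $|B_r(u)-B_r(v)|\le |u+v|_{2^{*}_{s}}|u-v|_{2^{*}_{s}}|g|_{3/(2s)}$ is small uniformly in $r$, while $B_r(v)\le |v|_{\infty}^{2}\,|\mathrm{supp}\,v|\,r^{-2s}|g|_{\infty}\to0$. With either patch the argument is complete.
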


\subsection{Functional Setting}\hfill\\
In order to study \eqref{P}, we use the change of variable $x\mapsto \e x$ and we will look for solutions to
\begin{equation}\label{Pe}
\left\{
\begin{array}{ll}
(a+b[u]^{2}_{s})(-\Delta)^{s}u+V(\e x)u=f(u)+|u|^{2^{*}_{s}-2}u \quad &\mbox{ in } \R^{3}, \\
u\in H^{s}(\R^{3}), \quad u>0 &\mbox{ in } \R^{3}.
\end{array}
\right. 
\end{equation}

Now, we introduce a penalization method in the spirit of \cite{DF} which will be fundamental to obtain our main result.
First of all,  without loss of generality, we will assume that
$$
0\in \Lambda \mbox{ and } V(0)=V_{0}=\inf_{\Lambda} V.
$$
Let $K>\frac{2\vartheta}{\vartheta-2}$ and $a_{0}>0$ be such that 
\begin{align}\label{V0Ka}
f(a_{0})+a_{0}^{2^{*}_{s}-1}=\frac{V_{1}}{K}a_{0}
\end{align} 
and we define
\begin{equation*}
\tilde{f}(t):=\left\{
\begin{array}{ll}
f(t)+(t^+)^{2^{*}_{s}-1} &\mbox{ if } t\leq a_{0},\\
\frac{V_{1}}{K}t &\mbox{ if } t>a_{0},
\end{array}
\right.
\end{equation*}
and 
\begin{equation*}
g(x, t):=\left\{
\begin{array}{ll}
\chi_{\Lambda}(x)(f(t)+(t^{+})^{2^{*}_{s}-1})+(1-\chi_{\Lambda}(x))\tilde{f}(t) &\mbox{ if } t> 0,\\
0 &\mbox{ if } t\leq 0.
\end{array}
\right.
\end{equation*}

It is easy to check that $g$ satisfies the following properties:
\begin{compactenum}[$(g_1)$]
\item $\lim_{t\rightarrow 0^{+}} \frac{g(x,t)}{t^{3}}=0$ \quad uniformly with respect to $x\in \R^{3}$,
\item $g(x,t)\leq f(t)+t^{2^{*}_{s}-1}$ \quad for all $x\in \R^{3}$, $t>0$,
\item $(i)$ $0\leq \vartheta G(x,t)<g(x,t)t$ \quad for all $x\in \Lambda$ and $t>0$,\\
$(ii)$ $0\leq 2G(x,t)<g(x,t)t\leq \frac{V_{1}}{K}t^{2}$ \quad for all $x\in \R^{3}\setminus\Lambda$ and $t>0$,
\item for each $x\in \Lambda$ the function $\frac{g(x,t)}{t^{3}}$ is increasing in $(0, \infty)$, and for each $x\in \R^{3}\setminus\Lambda$ the function $\frac{g(x,t)}{t^{3}}$ is increasing in $(0, a_{0})$. 
\end{compactenum}
Then, we consider the following modified problem
\begin{equation}\label{MP}
\left\{
\begin{array}{ll}
(a+b[u]^{2}_{s})(-\Delta)^{s}u+V(\e x)u=g(\e x, u) \quad &\mbox{ in } \R^{3}, \\
u\in H^{s}(\R^{3}), \quad u>0 &\mbox{ in } \R^{3}. 
\end{array}
\right. 
\end{equation}
The corresponding energy functional is given by 
$$
\J_{\e}(u)=\frac{1}{2}\|u\|^{2}_{\e}+\frac{b}{4}[u]^{4}_{s}-\int_{\R^{3}} G(\e x, u)\, dx,
$$
which is well-defined on the space
$$
\h:=\left\{u\in H^{s}(\R^{3}): \int_{\R^{3}} V(\e x)u^{2} \,dx<\infty\right\}
$$
endowed with the norm
$$
\|u\|_{\e}^{2}:=a[u]^{2}_{s}+\int_{\R^{3}} V(\e x)u^{2} \, dx.
$$
Clearly $\h$ is a Hilbert space with the following inner product 
$$
(u, v)_{\e}:=a\iint_{\R^{6}} \frac{(u(x)-u(y))(v(x)-v(y))}{|x-y|^{3+2s}} \, dxdy+\int_{\R^{3}} V(\e x)uv \, dx.
$$
It is standard to show that $\J_{\e}\in C^{1}(\h, \R)$ and its differential is given by
$$
\langle \J'_{\e}(u),v\rangle=(u, v)_{\e}+b[u]^{2}_{s}\iint_{\R^{6}} \frac{(u(x)-u(y))(v(x)-v(y))}{|x-y|^{3+2s}} \, dxdy    -\int_{\R^{3}} g(\e x, u)v\, dx
$$
for any $u,v\in \h$. 
Let us introduce the Nehari manifold associated with \eqref{MP}, that is,
$$
\mathcal{N}_{\e}:=\Bigl\{u\in \h\setminus\{0\}: \langle \J'_{\e}(u),u\rangle=0\Bigr\}.
$$

\noindent
We begin by proving that $\J_{\e}$ possesses a nice geometric structure:
\begin{lem}\label{lem2.2}
The functional $\J_{\e}$ has a mountain-pass geometry:
\begin{compactenum}[$(a)$]
\item there exist $\alpha, \rho>0$ such that $\mathcal{J}_{\e}(u) \geq \alpha $ with $\|u\|_{\e}= \rho$; 
\item there exists $e\in \mathcal{H}_{\e}$ with $\|e\|_{\e}>\rho$  such that $\mathcal{J}_{\e}(e)<0$.
\end{compactenum}
\end{lem}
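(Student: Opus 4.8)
The plan is to check the two conditions of the mountain-pass geometry separately. For $(a)$ I would control the nonlinear term near the origin by means of the fractional Sobolev embeddings and the growth recorded in $(g_1)$--$(g_2)$; for $(b)$ I would let a single ray $t\varphi$, with $\varphi$ a fixed bump supported near $0$, drive $\J_{\e}$ to $-\infty$ using the super-quartic growth of $G$ inside $\Lambda$.

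\emph{Item $(a)$.} First I would observe that $(g_1)$ gives $G(x,t)=o(|t|^{4})$ as $t\to 0$, while $(g_2)$ together with the subcritical growth in $(f_2)$ controls $G$ for large $t$; combining the two yields a constant $C>0$ such that $G(x,t)\leq C(|t|^{4}+|t|^{\p})$ for all $x\in\R^{3}$ and $t\in\R$. Then $\int_{\R^{3}}G(\e x,u)\,dx\leq C(|u|_{4}^{4}+|u|_{\p}^{\p})$; since $s\in(\tfrac34,1)$ forces $4<\p$, Theorem~\ref{Sembedding} gives the continuous embeddings $\h\hookrightarrow L^{4}(\R^{3})$ and $\h\hookrightarrow L^{\p}(\R^{3})$, and $(V_1)$ yields $\|u\|\leq c\,\|u\|_{\e}$ with $c$ independent of $\e$, so the integral is bounded by $C'(\|u\|_{\e}^{4}+\|u\|_{\e}^{\p})$. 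Discarding the nonnegative Kirchhoff term $\frac{b}{4}[u]_{s}^{4}$, I would arrive at
$$
\J_{\e}(u)\ \geq\ \frac12\|u\|_{\e}^{2}-C'\|u\|_{\e}^{4}-C'\|u\|_{\e}^{\p},
$$
and since $4>2$ and $\p>2$, taking $\|u\|_{\e}=\rho$ with $\rho$ small (and independent of $\e$) makes the right-hand side $\geq\alpha$ for some $\alpha>0$.

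\emph{Item $(b)$.} As $0\in\Lambda$ and $\Lambda$ is open, I would fix $r_{0}>0$ with $\B_{r_{0}}\subset\Lambda$ and a nonnegative $\varphi\in C^{\infty}_{c}(\B_{r_{0}})$, $\varphi\not\equiv 0$ (which lies in $\h$ since $V$ is continuous). Then $\e\,\supp\varphi\subset\B_{r_{0}}\subset\Lambda$ for every $\e\in(0,1]$, so on $\supp\varphi$ one has $g(\e x,t)=f(t)+(t^{+})^{\p-1}$, whence $G(\e x,t\varphi)\geq F(t\varphi)\geq\frac{C_{0}}{q}t^{q}\varphi^{q}$ for $t>0$ by $(f_2)$. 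Consequently
$$
\J_{\e}(t\varphi)\ \leq\ \frac{t^{2}}{2}\|\varphi\|_{\e}^{2}+\frac{b\,t^{4}}{4}[\varphi]_{s}^{4}-\frac{C_{0}\,t^{q}}{q}|\varphi|_{q}^{q},
$$
and since $q>4$ the right-hand side tends to $-\infty$ as $t\to+\infty$. Choosing $t_{0}>0$ with $\J_{\e}(t_{0}\varphi)<0$ and $\|t_{0}\varphi\|_{\e}>\rho$ and setting $e:=t_{0}\varphi$ completes $(b)$.

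\emph{Main obstacle.} The statement is not deep, but the point that needs care is the Kirchhoff contribution $\frac{b}{4}[u]_{s}^{4}$: in $(a)$ it is nonnegative and can simply be dropped, whereas in $(b)$ it generates the quartic term $\frac{b}{4}t^{4}[\varphi]_{s}^{4}$, which would prevent $\J_{\e}(t\varphi)\to-\infty$ if $f$ were merely superquadratic --- it is overcome precisely because $(f_2)$ demands $q>4$ rather than only $q>2$. The only other thing to watch is that the embedding $\h\hookrightarrow L^{4}(\R^{3})$ used in $(a)$ requires $4\leq\p$, i.e. the standing assumption $s\in(\tfrac34,1)$; beyond this, everything reduces to routine uses of Theorem~\ref{Sembedding}.
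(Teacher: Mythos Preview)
Your proof is correct and follows essentially the same approach as the paper: growth estimates on $G$ combined with the Sobolev embeddings for $(a)$, and super-quartic growth of $G$ inside $\Lambda$ to overwhelm the Kirchhoff term for $(b)$. The only cosmetic differences are that the paper bounds $G(x,t)\leq \xi t^{2}+C_{\xi}t^{\p}$ (with small $\xi$) rather than your $C(t^{4}+t^{\p})$, and for $(b)$ it invokes $(g_3)$--$(i)$ with exponent $\vartheta$ instead of $(f_2)$ with exponent $q$; also, the paper takes a test function supported in $\Lambda_{\e}$ for each fixed $\e$, whereas your single $\varphi$ works only for $\e\in(0,1]$, a harmless restriction since only small $\e$ matters.
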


\begin{proof}
$(a)$ By assumptions $(g_1)$ and $(g_2)$ we deduce that for any $\xi>0$ there exists $C_{\xi}>0$ such that
\begin{equation*}
\mathcal{J}_{\e}(u)\geq\frac{1}{2}\|u\|_{\e}^{2}- \int_{\R^{3}} G(\e x, u)\, dx \geq \frac{1}{2} \|u\|_{\e}^{2} -  \xi C\|u\|_{\e}^{2}- C_{\xi}C\|u\|_{\e}^{2^{*}_{s}}. 
\end{equation*}
Then, there exist $\alpha, \rho>0$ such that $\mathcal{J}_{\e}(u) \geq \alpha $ with $\|u\|_{\e}= \rho$.\\
$(b)$ Using $(g_3)$-$(i)$, we deduce that for any $u\in C^{\infty}_{c}(\R^{3})\setminus\{0\}$ such that $u\geq 0$ and $supp(u)\subset \Lambda_{\e}$, and for all $\tau>0$ it holds
\begin{align}\label{b2.2}
\mathcal{J}_{\e}(\tau u)&= \frac{\tau^{2}}{2} \|u\|_{\e}^{2}+ b\frac{\tau^{4}}{4} [u]^{4}_{s} -\int_{\Lambda_{\e}} G(\e x, \tau u)\, dx\nonumber\\
&\leq \frac{\tau^{2}}{2} \|u\|_{\e}^{2}+ b\frac{\tau^{4}}{4} [u]^{4}_{s} - C_{1} \tau^{\vartheta}\int_{\Lambda_{\e}} u^{\vartheta} \, dx + C_{2}, 
\end{align}
for some constants $C_1, C_{2}>0$. Recalling that $\vartheta \in (4,2^{*}_{s})$ we can conclude that $\mathcal{J}_{\e}(\tau u)\rightarrow -\infty \mbox{ as } \tau\rightarrow \infty$. 
\end{proof}

\noindent
In view of Lemma \ref{lem2.2}, we can use a variant of the mountain-pass theorem without $(PS)$-condition (see \cite{Willem}) 
to deduce the existence of a Palais-Smale sequence $\{u_{n}\}_{n\in \mathbb{N}}\subset \h$ such that
\begin{align}\label{PSu}
\J_{\e}(u_{n})=c_{\e}+o_{n}(1) \quad \mbox{ and } \quad \J'_{\e}(u_{n})=o_{n}(1)
\end{align}
where
\begin{align}\label{minimax}
c_{\e}:=\inf_{\gamma\in \Gamma_{\e}} \max_{t\in [0,1]} \J_{\e}(\gamma(t)) \quad \mbox{ and } \quad \Gamma_{\e}:=\Bigl\{\gamma\in C([0, 1], \h): \gamma(0)=0, \J_{\e}(\gamma(1))\leq 0\Bigr\}.
\end{align}
As in \cite{Willem}, we can use the following equivalent characterization of $c_{\e}$ more appropriate for our aim:
$$
c_{\e}=\inf_{u\in \h\setminus\{0\}} \max_{t\geq 0} \J_{\e}(t u).
$$
Moreover, from the monotonicity of $g$, it is easy to see that for all $u\in \h\setminus\{0\}$ there exists a unique $t_{0}=t_{0}(u)>0$ such that 
$$
\J_{\e}(t_{0}u)=\max_{t\geq 0} \J_{\e}(tu).
$$
In the next lemma, we will see that $c_{\e}$ is less then a threshold value involving the best constant $S_{*}$ of Sobolev embedding $\mathcal{D}^{s,2}(\R^{3})$ in $L^{\p}(\R^{3})$. More precisely:
\begin{lem}\label{lem444}
There exists $T>0$ such that
$$
c_{\e}<\frac{a}{2}S_{*}T^{3-2s}+\frac{b}{4}S_{*}^{2}T^{6-4s}-\frac{1}{2^{*}_{s}}T^{3}=:c_{*}
$$
for all $\e>0$.
\end{lem}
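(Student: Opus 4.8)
The plan is to estimate the minimax level $c_{\e}$ by testing against a truncated, rescaled extremal of the fractional Sobolev inequality of Theorem~\ref{Sembedding}, exploiting the subcritical term forced by $(f_2)$ to beat the error terms coming from the truncation and the potential. First I would fix $T$. Along optimal profiles the relevant one–dimensional energy is
\[
h(t):=\frac{a}{2}S_{*}^{3/(2s)}t^{2}+\frac{b}{4}S_{*}^{3/s}t^{4}-\frac{1}{2^{*}_{s}}S_{*}^{3/(2s)}t^{2^{*}_{s}},\qquad t\geq 0.
\]
Because $s>\tfrac34$ gives $2^{*}_{s}>4$, $h$ is positive near the origin and tends to $-\infty$, hence attains a positive maximum at a unique $t^{*}>0$; an elementary computation shows that if $T>0$ is defined by $S_{*}T^{3-2s}=S_{*}^{3/(2s)}(t^{*})^{2}$, then $c_{*}=h(t^{*})=\max_{t\geq0}h(t)$. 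With this choice of $T$, the statement reduces to proving $c_{\e}<\max_{t\geq0}h(t)$ for every $\e>0$.

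Next, the competitor. Let $U$ be the Aubin--Talenti extremal of Theorem~\ref{Sembedding}, normalised so that $(-\Delta)^{s}U=U^{2^{*}_{s}-1}$ (hence $[U]_{s}^{2}=|U|_{2^{*}_{s}}^{2^{*}_{s}}=S_{*}^{3/(2s)}$), and let $U_{\mu}(x)=\mu^{-(3-2s)/2}U(x/\mu)$. Since $0\in\Lambda$ is interior, fix $\rho_{0}>0$ with $\B_{\rho_{0}}\subset\Lambda$, so that $\B_{\rho_{0}/\e}\subset\Lambda_{\e}$; fix $\eta\in C^{\infty}_{c}(\B_{1})$ with $0\leq\eta\leq1$, $\eta\equiv1$ on $\B_{1/2}$, and put $w_{\mu,R}(x):=\eta(x/R)U_{\mu}(x)$, supported in $\B_{R}$. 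Lemma~\ref{pp} and the standard fractional Brezis--Nirenberg estimates give, as $\mu/R\to0$,
\[
[w_{\mu,R}]_{s}^{2}=S_{*}^{3/(2s)}+O\!\big((\mu/R)^{3-2s}\big),\qquad |w_{\mu,R}|_{2^{*}_{s}}^{2^{*}_{s}}=S_{*}^{3/(2s)}+O\!\big((\mu/R)^{3}\big),
\]
\[
|w_{\mu,R}|_{q}^{q}\gtrsim\mu^{\,3-(3-2s)q/2},\qquad |w_{\mu,R}|_{2}^{2}\lesssim R^{4s-3}\mu^{3-2s},
\]
the last two exploiting $q\in(4,2^{*}_{s})$ and $s>\tfrac34$ (so $U\notin L^{2}(\R^{3})$ and $\int_{\B_{M}}U^{2}\lesssim M^{4s-3}$). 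For a given $\e$ I choose $R_{\e}\leq\rho_{0}/\e$ (so that $\supp w_{\mu,R_{\e}}\subset\Lambda_{\e}$) and $\mu_{\e}$ with $\mu_{\e}/R_{\e}$ small, and set $w_{\e}:=w_{\mu_{\e},R_{\e}}\geq0$. On $\supp w_{\e}$ one has $g(\e x,\cdot)=f(\cdot)+(\cdot)^{2^{*}_{s}-1}$, $F\geq0$, $F(\tau)\geq\frac{C_{0}}{q}\tau^{q}$ by $(f_2)$, and $V(\e x)\leq V_{M}:=\max_{\overline{\Lambda}}V$, whence
\[
\J_{\e}(tw_{\e})\leq\frac{t^{2}}{2}\big(a[w_{\e}]_{s}^{2}+V_{M}|w_{\e}|_{2}^{2}\big)+\frac{b}{4}t^{4}[w_{\e}]_{s}^{4}-\frac{C_{0}}{q}t^{q}|w_{\e}|_{q}^{q}-\frac{t^{2^{*}_{s}}}{2^{*}_{s}}|w_{\e}|_{2^{*}_{s}}^{2^{*}_{s}}=:\psi_{\e}(t).
\]

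It then remains to optimise $\psi_{\e}$ over $t\geq0$. Since $2^{*}_{s}>4$, $\psi_{\e}(0)=0$ and $\psi_{\e}\to-\infty$, so $\max_{t\geq0}\psi_{\e}=\psi_{\e}(t_{\e})$ for some $t_{\e}>0$; from $\psi_{\e}'(t_{\e})=0$ and the estimates above one checks that $t_{\e}$ stays in a fixed interval $[\tau_{1},\tau_{2}]\subset(0,\infty)$ (the upper bound because $2^{*}_{s}-2>2$, the lower bound because $\max_{t}\psi_{\e}$ is bounded below by a positive constant). Discarding the subcritical term and inserting the Sobolev estimates gives $\max_{t\geq0}\psi_{\e}\leq c_{*}+C\mathcal E_{\e}$ with $\mathcal E_{\e}:=(\mu_{\e}/R_{\e})^{3-2s}+(\mu_{\e}/R_{\e})^{3}+R_{\e}^{4s-3}\mu_{\e}^{3-2s}$; reinstating that term at $t_{\e}\geq\tau_{1}$ yields
\[
c_{\e}\leq\max_{t\geq0}\J_{\e}(tw_{\e})\leq\max_{t\geq0}\psi_{\e}(t)\leq c_{*}+C\mathcal E_{\e}-c'\mu_{\e}^{\,3-(3-2s)q/2}.
\]
Since $q>4$ and $s<1$ force $3-(3-2s)q/2<3-2s$, the last term already dominates the first piece of $\mathcal E_{\e}$. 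The delicate part — and the main obstacle — is to calibrate $\mu_{\e}$ against $R_{\e}$ so that it dominates all of $\mathcal E_{\e}$ \emph{uniformly in} $\e>0$: for $\e$ in a bounded range one takes a fixed admissible radius $R_{0}$ and a fixed small $\mu_{0}$, where $\mathcal E_{\e}\lesssim\mu_{0}^{3-2s}\ll\mu_{0}^{\,3-(3-2s)q/2}$; for $\e\to\infty$ one is forced to shrink the cut-off, $R_{\e}=\rho_{0}/\e\to0$, and takes $\mu_{\e}=\rho_{0}\e^{-\kappa}$ with $\kappa$ large enough (e.g.\ $\kappa>\max\{6/((3-2s)q),\,(3-2s)/((3-2s)q/2-2s)\}$), so that $\mathcal E_{\e}/\mu_{\e}^{\,3-(3-2s)q/2}$ is a sum of powers of $\e$ with strictly negative exponent, hence $<1$ for $\e$ large, a continuity argument covering the intermediate range. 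In every case $\max_{t\geq0}\psi_{\e}(t)<c_{*}$, hence $c_{\e}<c_{*}$ for all $\e>0$. Apart from this $\e$–uniform balancing, the supporting technical points are the sharp fractional Brezis--Nirenberg estimates for $w_{\mu,R}$ (notably the $L^{2}$ bound, where $s>\tfrac34$ is used) and the two-sided control of $t_{\e}$.
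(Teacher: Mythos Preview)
Your proof is correct and follows the same Brezis--Nirenberg template as the paper: test the minimax level against a truncated Aubin--Talenti profile supported in $\Lambda_{\e}$, and use the subcritical gain from $(f_2)$ to absorb the $O(\mu^{3-2s})$ truncation and potential errors (the key inequality $3-\tfrac{(3-2s)q}{2}<3-2s$ for $q>4$ is exactly the one the paper uses). Two points of contrast are worth recording. First, the paper optimises along the \emph{dilation} path $\gamma_{h}(t)=v_{h}(\cdot/t)$, which produces the profile $K(t)=\tfrac{aS_{*}}{2}t^{3-2s}+\tfrac{bS_{*}^{2}}{4}t^{6-4s}-\tfrac{1}{2^{*}_{s}}t^{3}$, whereas you optimise along the scalar path $t\mapsto tw_{\e}$, giving your $h(t)$; the two are related by the substitution $t=S_{*}^{1/(2s)}\tau^{2/(3-2s)}$, so they share the same maximum value $c_{*}$ (your identification of $T$ via $S_{*}T^{3-2s}=S_{*}^{3/(2s)}(t^{*})^{2}$ is exactly this change of variables). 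Second --- and this is where you make the argument harder than necessary --- you flag the ``$\e$-uniform balancing'' of $\mu_{\e}$ against $R_{\e}$ as the main obstacle, but the statement is pointwise in $\e$, not uniform: for each fixed $\e>0$ one simply picks any admissible radius with $\B_{2\rho}\subset\Lambda_{\e}$ and then takes the concentration parameter small enough depending on $\rho$ (hence on $\e$). This is precisely what the paper does (it fixes $\rho$ with $\B_{2\rho}\subset\Lambda_{\e}$, normalises $\rho=1$, and sends $h\to0$), so your case split between bounded $\e$, $\e\to\infty$, and intermediate ranges can be dropped entirely.
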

\begin{proof}
We argue as in \cite{LSZ}.
Let $\eta\in C^{\infty}_{c}(\R^{3})$ be a cut-off function such that $\eta=1$ in $\B_{\rho}$, $\supp(\eta)\subset \B_{2\rho}$ and $0\leq \eta\leq 1$, where $\B_{2\rho}\subset \Lambda_{\e}$. For simplicity, we assume that $\rho=1$.  
We know (see \cite{CT}) that $S_{*}$ is achieved by $U(x)=\kappa(\mu^{2}+|x-x_{0}|^{2})^{-\frac{3-2s}{2}}$, with $\kappa\in \R$, $\mu>0$ and $x_{0}\in \R^{3}$. Taking $x_{0}=0$, as in \cite{SV2}, we can define 
$$
v_{h}(x):=\eta(x)u_{h}(x) \quad \forall h>0,
$$ 
where 
$$
u_{h}(x):=h^{-\frac{3-2s}{2}}u^{*}(x/h) \mbox{ and } u^{*}(x):=\frac{U(x/S_{*}^{\frac{1}{2s}})}{|U|_{2^{*}_{s}}}.
$$
Then $(-\Delta)^{s}u_{h}=|u_{h}|^{2^{*}_{s}-2}u_{h}$ in $\R^{3}$ and $[u_{h}]^{2}_{s}=|u_{h}|^{2^{*}_{s}}_{2^{*}_{s}}=S_{*}^{\frac{3}{2s}}$. We also recall the following useful estimates: 
\begin{align}
&A_{h}:=[v_{h}]^{2}_{s}=S^{\frac{3}{2s}}_{*}+O(h^{3-2s}) \label{sq1}\\
&B_{h}:=|v_{h}|_{2}^{2}=O(h^{3-2s}) \label{sq2}\\
&C_{h}:=|v_{h}|_{q}^{q} \geq 
\left\{
\begin{array}{ll}
O(h^{3-\frac{(3-2s)q}{2}}) &\mbox{ if } q>\frac{3}{3-2s}\\
O(\log (\frac{1}{h})h^{3-\frac{(3-2s)q}{2}}) &\mbox{ if } q=\frac{3}{3-2s} \\
O(h^{\frac{(3-2s)q}{2}}) &\mbox{ if } q<\frac{3}{3-2s}
\end{array}
\right.\label{sq3}\\
&D_{h}:=|v_{h}|_{2^{*}_{s}}^{2^{*}_{s}}=S^{\frac{3}{2s}}_{*}+O(h^{3}). \label{sq4}
\end{align}
Let us note that for all $h>0$ there exists $t_{0}>0$ such that $\J_{\e}(\gamma_{h}(t_{0}))<0$, where $\gamma_{h}(t)=v_{h}(\cdot/t)$.
Indeed, setting $V_{2}:=\max_{x\in \overline{\Lambda}} V(x)$, by $(f_2)$ we have
\begin{align}\label{sq5}
\J_{\e}(\gamma_{h}(t))&\leq \frac{a}{2}t^{3-2s}[v_{h}]^{2}_{s}+\frac{V_{2}}{2} t^{3} |v_{h}|_{2}^{2}+\frac{b}{4}t^{6-4s}[v_{h}]^{4}_{s}-\frac{t^{3}}{2^{*}_{s}}|v_{h}|^{2^{*}_{s}}_{2^{*}_{s}}-\frac{t^{3}}{q}|v_{h}|_{q}^{q}C_{0} \nonumber \\
&=\frac{a}{2}t^{3-2s}A_{h}+\frac{b}{4}A_{h}^{2}t^{6-4s}+\left(V_{2}\frac{B_{h}}{2}-\frac{D_{h}}{2^{*}_{s}}-\frac{C_{0}C_{h}}{q}\right) t^{3}.
\end{align}
Since $0<6-4s<3$, we can use \eqref{sq2} to deduce that
$$
V_{2}\frac{B_{h}}{2}-\frac{D_{h}}{2^{*}_{s}}\rightarrow -\frac{1}{2^{*}_{s}} S_{*}^{\frac{3}{2s}}
$$
as $h\rightarrow 0$. Hence, using \eqref{sq1}, we can see that for all $h>0$ sufficiently small $\J_{\e}(\gamma_{h}(t))\rightarrow -\infty$ as $t\rightarrow \infty$, that is there exists $t_{0}>0$ such that $\J_{\e}(\gamma_{h}(t_{0}))<0$. 

Now, as $t\rightarrow 0^{+}$, we have 
\begin{align*}
[\gamma_{h}(t)]^{2}_{s} + |\gamma_{h}(t)|_{2}^{2}  = t^{3-2s} A_{h} + t^{3}B_{h}\rightarrow 0 \mbox{ uniformly for } h>0 \mbox{ small. }
\end{align*}
We set $\gamma_{h}(0)=0$. Then $\gamma_{h}(t_{0}\cdot) \in \Gamma_{\e}$, where $\Gamma_{\e}$ is defined as in \eqref{minimax} and we infer that 
$$
c_{\e}\leq \sup_{t\geq 0} \J_{\e}(\gamma_{h}(t)).
$$

Taking into account that $c_{\e}>0$, by \eqref{sq5} there exists $t_{h}>0$ such that 
\begin{align*}
\sup_{t\geq 0} \J_{\e}(\gamma_{h}(t))= \J_{\e}(\gamma_{h}(t_{h})). 
\end{align*}
In the light of \eqref{sq1}, \eqref{sq3} and \eqref{sq5} we deduce that $\J_{\e}(\gamma_{h}(t)) \rightarrow 0^{+}$ as $t\rightarrow 0^{+}$ and $\J_{\e}(\gamma_{h}(t))\rightarrow -\infty$ as $t\rightarrow \infty$ uniformly for $h>0$ small. 
Then there exist $t_{1}, t_{2}>0$ (independent of $h>0$) satisfying $t_{1}\leq t_{h}\leq t_{2}$. 

Set 
\begin{align*}
H_{h}(t):= \frac{aA_{h}}{2} t^{3-2s} + \frac{bA_{h}^{2}}{4}t^{6-4s} -\frac{D_{h}}{2^{*}_{s}} t^{3}. 
\end{align*}
Therefore, 
\begin{align*}
c_{\e}\leq \sup_{t\geq 0} H_{h}(t) + \left(\frac{V_{2}B_{h}}{2} -\frac{C_{0}C_{h}}{q}\right) t_{h}^{3}. 
\end{align*}
From \eqref{sq3}, for any $q\in (2, 2^{*}_{s})$, we have $C_{h}\geq O(h^{3-\frac{(3-2s)q}{2}})$. Then, by \eqref{sq2}, we can infer
\begin{align*}
c_{\e}\leq \sup_{t\geq0} H_{h}(t) + O(h^{3-2s}) - O(C_{0} h^{3-\frac{(3-2s)q}{2}}). 
\end{align*}
Since $3-2s>0$ and $3-\frac{(3-2s)q}{2}>0$, we obtain 
\begin{align*}
\sup_{t\geq0} H_{h}(t) \geq \frac{c_{\e}}{2} \quad \mbox{ uniformly for } h>0 \mbox{ small. }
\end{align*}
Arguing as above, there exist $t_{3}, t_{4}>0$ (independent of $h>0$) such that 
\begin{align*}
\sup_{t\geq0} H_{h}(t)= \sup_{t\in [t_{3}, t_{4}]} H_{h}(t). 
\end{align*}
By \eqref{sq1} we deduce 
\begin{align}\label{sq6}
c_{\e}\leq \sup_{t\geq0} K(S_{*}^{\frac{1}{2s}} t) +O(h^{3-2s}) - O(C_{0} h^{3-\frac{(3-2s)q}{2}}), 
\end{align}
where 
\begin{align*}
K(t):= \frac{aS_{s}}{2} t^{3-2s} +\frac{bS_{s}^{2}}{4} t^{6-4s} -\frac{1}{2^{*}_{s}} t^{3}. 
\end{align*}
Let us note that for $t>0$,
\begin{align*}
K'(t)&= \frac{3-2s}{2} aS_{*} t^{2-2s} + \frac{3-2s}{2} bS_{*}^{2} t^{5-4s} - \frac{3-2s}{2} t^{2} \\
&= \frac{(3-2s)t^{2-2s}}{2} \left( aS_{*} + bS_{*}^{2} t^{3-2s} -t^{2s}\right) =: \frac{(3-2s)t^{2-2s}}{2} \tilde{K}(t). 
\end{align*}
Moreover, 
\begin{align*}
\tilde{K}'(t)= bS_{*} (3-2s)t^{2-2s} - 2s t^{2s-1}= t^{2-2s} [bS_{*}^{2}(3-2s) - 2s t^{4s-3}]. 
\end{align*}
Since $4s>3$, there exists a unique $T>0$ such that $\tilde{K}(t)>0$ for $t\in (0, T)$ and $\tilde{K}(t)<0$ for $t>T$. Thus, $T$ is the unique maximum point of $K(t)$. In virtue of \eqref{sq6} we have 
\begin{align}\label{sq7}
c_{\e}\leq K(T)+ O(h^{3-2s}) - O(C_{0} h^{3-\frac{(3-2s)q}{2}}). 
\end{align}
If $q>\frac{4s}{3-2s}$, then $0<3-\frac{(3-2s)q}{2}<3-2s$, and by \eqref{sq7}, for any fixed $C_{0}>0$, it holds $c_{\e}<K(T)$ for $h>0$ small. 
If $2<q<\frac{4s}{3-2s}$, then, for $h>0$ small and $C_{0}>h^{\frac{(3-2s)q}{2}-2s-1}$, we also have $c_{\e}<K(T)$. 
\end{proof}

\begin{lem}\label{lem2.4}
Every sequence $\{u_{n}\}_{n\in\mathbb{N}}$ satisfying \eqref{PSu} is bounded in $\mathcal{H}_{\e}$.
\end{lem}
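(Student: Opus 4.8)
The plan is to exploit the superlinearity of $f$ encoded in $(f_3)$ — inherited by the penalized nonlinearity through $(g_3)$ — via the standard device of estimating the combination $\J_\e(u_n)-\frac1\vartheta\langle\J_\e'(u_n),u_n\rangle$. Since $\J_\e'(u_n)\to 0$ in $\h^{*}$ we have $|\langle\J_\e'(u_n),u_n\rangle|\le o_n(1)\|u_n\|_\e$, so that \eqref{PSu} yields
\begin{align*}
c_\e+o_n(1)+o_n(1)\|u_n\|_\e &\ge \J_\e(u_n)-\frac1\vartheta\langle\J_\e'(u_n),u_n\rangle \\
&= \Big(\frac12-\frac1\vartheta\Big)\|u_n\|_\e^{2}+\Big(\frac14-\frac1\vartheta\Big)b[u_n]_s^{4} \\
&\quad +\int_{\R^{3}}\Big(\frac1\vartheta g(\e x,u_n)u_n-G(\e x,u_n)\Big)\,dx.
\end{align*}
As $\vartheta\in(4,2^{*}_{s})$, both $\frac12-\frac1\vartheta$ and $\frac14-\frac1\vartheta$ are positive, so the Kirchhoff term may simply be discarded; it only helps.

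To control the remaining integral I would split $\R^{3}=\Lambda_\e\cup(\R^{3}\setminus\Lambda_\e)$, where $\Lambda_\e=\{x:\e x\in\Lambda\}$. On $\Lambda_\e$, property $(g_3)$-$(i)$ gives $\frac1\vartheta g(\e x,u_n)u_n-G(\e x,u_n)\ge 0$. On $\R^{3}\setminus\Lambda_\e$, property $(g_3)$-$(ii)$ gives $G(\e x,u_n)\le\frac12 g(\e x,u_n)u_n$ and $g(\e x,u_n)u_n\le\frac{V_1}{K}u_n^{2}$, whence
\[
\frac1\vartheta g(\e x,u_n)u_n-G(\e x,u_n)\ \ge\ \Big(\frac1\vartheta-\frac12\Big)g(\e x,u_n)u_n\ \ge\ -\frac{\vartheta-2}{2\vartheta K}\,V_1 u_n^{2},
\]
the last inequality because $\frac1\vartheta-\frac12<0$ and $0\le g(\e x,u_n)u_n\le\frac{V_1}{K}u_n^{2}$. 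Integrating and using $V_1\int_{\R^{3}}u_n^{2}\,dx\le\int_{\R^{3}}V(\e x)u_n^{2}\,dx\le\|u_n\|_\e^{2}$, the negative part of the integral is bounded below by $-\frac{\vartheta-2}{2\vartheta K}\|u_n\|_\e^{2}$.

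Combining the estimates and recalling $\frac12-\frac1\vartheta=\frac{\vartheta-2}{2\vartheta}$, one obtains
\[
c_\e+o_n(1)+o_n(1)\|u_n\|_\e\ \ge\ \frac{\vartheta-2}{2\vartheta}\Big(1-\frac1K\Big)\|u_n\|_\e^{2}.
\]
Since $K>\frac{2\vartheta}{\vartheta-2}>1$, the coefficient on the right is strictly positive, hence $\|u_n\|_\e^{2}\le C\big(1+\|u_n\|_\e\big)$ with $C$ independent of $n$, and therefore $\{u_n\}$ is bounded in $\h$. The argument is essentially routine; the only point requiring care is the region outside $\Lambda_\e$, where the modified nonlinearity is dominated only by the linear term $\frac{V_1}{K}t$, so it is crucial that $K$ was fixed large (here $K>1$ already suffices, while the sharper bound $K>\frac{2\vartheta}{\vartheta-2}$ is exploited later). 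Unlike the passage to the weak limit in a $(PS)$ sequence — where the nonlocal Kirchhoff term genuinely complicates matters — there is no real obstacle in this step.
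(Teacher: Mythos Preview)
Your proof is correct and follows essentially the same route as the paper: estimate $\J_\e(u_n)-\frac{1}{\vartheta}\langle\J'_\e(u_n),u_n\rangle$, drop the nonnegative Kirchhoff contribution (since $\vartheta>4$), split the integral over $\Lambda_\e$ and its complement, and use $(g_3)$ on each piece to arrive at the same lower bound $\frac{\vartheta-2}{2\vartheta}\bigl(1-\frac{1}{K}\bigr)\|u_n\|_\e^{2}$. Your remark that $K>1$ already suffices for this particular step is accurate; the paper simply records the standing hypothesis.
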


\begin{proof}
In view of $(g_3)$ we can deduce that 
\begin{align}\label{cancan}
c_{\e}+ o_{n}(1)\|u_{n}\|_{\e}&\geq \mathcal{J}_{\e}(u_{n})- \frac{1}{\vartheta} \langle \mathcal{J}_{\e}'(u_{n}), u_{n}\rangle\\
&= \left(\frac{\vartheta-2}{2\vartheta}  \right)\|u_{n}\|^{2}_{\e}+b \left(\frac{\vartheta-4}{4\vartheta}  \right)[u_{n}]^{4}+\frac{1}{\vartheta}\int_{\R^{3}\setminus \Lambda_{\e}} [g(\e x, u_{n})u_{n}- \vartheta G(\e x, u_{n})]\, dx \nonumber \\
&\quad + \frac{1}{\vartheta}\int_{\Lambda_{\e}} [g(\e x, u_{n})u_{n}- \vartheta G(\e x, u_{n})]\, dx \nonumber\\
&\geq  \left(\frac{\vartheta-2}{2\vartheta}  \right)\|u_{n}\|^{2}_{\e}+\frac{1}{\vartheta}\int_{\R^{3}\setminus \Lambda_{\e}} [g(\e x, u_{n})u_{n}- \vartheta G(\e x, u_{n})]\, dx \nonumber\\
&\geq  \left(\frac{\vartheta-2}{2\vartheta}  \right)\|u_{n}\|^{2}_{\e} -\left(\frac{\vartheta-2}{2\vartheta}\right) \frac{1}{K}\int_{\R^{3}\setminus \Lambda_{\e}} V(\e x) u^{2}_{n}dx\nonumber\\
& \geq  \left(\frac{\vartheta-2}{2\vartheta}  \right)\left(1- \frac{1}{K}\right)\|u_{n}\|^{2}_{\e}.
\end{align}
Since $\vartheta >4$ and $K>2$, we can conclude that $\{u_{n}\}_{n\in\mathbb{N}}$ is bounded in $\mathcal{H}_{\e}$.
\end{proof}

\begin{lem}\label{lem445}
There exist a sequence $\{z_{n}\}_{n\in \mathbb{N}}\subset \R^{3}$ and $R, \beta>0$ such that 
\begin{align*}
\int_{B_{R}(z_{n})} u_{n}^{2} \, dx \geq \beta. 
\end{align*}
Moreover, $\{z_{n}\}_{n\in \mathbb{N}}$ is bounded in $\R^{3}$. 
\end{lem}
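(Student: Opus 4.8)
\emph{Proof proposal.} I would prove the lemma in two parts: first exclude vanishing, then localise the bubbles.

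\emph{Part 1 (non-vanishing).} Suppose, for contradiction, that $\lim_{n}\sup_{y\in\R^3}\int_{B_R(y)}u_n^2\,dx=0$ for every $R>0$. By Lemma \ref{lem2.4} and $(V_1)$ the sequence $\{u_n\}$ is bounded in $H^s(\R^3)$, so Lemma \ref{Lions} gives $u_n\to 0$ in $L^r(\R^3)$ for every $r\in(2,\p)$; note that $4\in(2,\p)$ since $s>\tfrac34$. Hence, by $(f_1)$--$(f_3)$, $\int_{\R^3}f(u_n)u_n\,dx\to0$ and $\int_{\R^3}F(u_n)\,dx\to0$. Passing to a subsequence, set $\ell:=\lim[u_n]_s^2$, $V_*:=\lim\int V(\e x)u_n^2\,dx$, $e:=\lim\int_{\Lambda_\e}(u_n^+)^{\p}\,dx$, $p:=\lim\int_{\R^3\setminus\Lambda_\e}g(\e x,u_n)u_n\,dx$ and $Q:=\lim\int_{\R^3\setminus\Lambda_\e}G(\e x,u_n)\,dx$; by $(g_3)(ii)$ one has $0\le 2Q\le p\le\frac1K V_*$, and Theorem \ref{Sembedding} gives $e\le S_*^{-\p/2}\ell^{\p/2}$. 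Since $\langle\J'_{\e}(u_n),u_n\rangle\to0$ and $\{u_n\}$ is bounded, $a\ell+V_*+b\ell^2=e+p$, whence $V_*\ge0$ and $p\le\frac1KV_*$ force $e\ge a\ell+b\ell^2$.

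\emph{Part 2 (energy identity and contradiction).} Substituting $V_*=e+p-a\ell-b\ell^2$ into $\J_{\e}(u_n)\to c_{\e}$ gives
$$c_{\e}=\Big(\tfrac12-\tfrac1{\p}\Big)e-\tfrac b4\ell^2+\Big(\tfrac p2-Q\Big)\ \ge\ \Big(\tfrac12-\tfrac1{\p}\Big)e-\tfrac b4\ell^2 .$$
If $\ell=0$ then $e=V_*=p=Q=0$, so $c_{\e}=0$, contradicting $c_{\e}\ge\alpha>0$ (Lemma \ref{lem2.2}). If $\ell>0$, then $a\ell+b\ell^2\le e\le S_*^{-\p/2}\ell^{\p/2}$, i.e. $S_*^{-\p/2}\ell^{\p/2-1}-a-b\ell\ge0$; this map is strictly convex, equals $-a$ at $0^+$ and tends to $+\infty$ (here $4s>3$ is used), so it has a unique positive zero $\ell^*$, and the constraint forces $\ell\ge\ell^*$. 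Then, using $\p>4$, $e\ge a\ell+b\ell^2$ and monotonicity in $\ell$,
$$c_{\e}\ \ge\ \Big(\tfrac12-\tfrac1{\p}\Big)(a\ell+b\ell^2)-\tfrac b4\ell^2=\Big(\tfrac12-\tfrac1{\p}\Big)a\ell+\Big(\tfrac14-\tfrac1{\p}\Big)b\ell^2\ \ge\ \Big(\tfrac12-\tfrac1{\p}\Big)a\ell^*+\Big(\tfrac14-\tfrac1{\p}\Big)b(\ell^*)^2 .$$
Taking $\ell^*=S_*T^{3-2s}$ with $T$ as in Lemma \ref{lem444}, the relation $aS_*+bS_*^2T^{3-2s}=T^{2s}$ (i.e. $\tilde K(T)=0$) gives $a\ell^*+b(\ell^*)^2=S_*^{-\p/2}(\ell^*)^{\p/2}=T^3$, so the last quantity equals $\frac a2S_*T^{3-2s}+\frac b4S_*^2T^{6-4s}-\frac1{\p}T^3=c_*$. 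Thus $c_{\e}\ge c_*$, contradicting Lemma \ref{lem444}. Hence vanishing fails, and there exist $\beta>0$, $\{z_n\}\subset\R^3$, and $R>0$ (which we may take as large as we wish) with $\int_{B_R(z_n)}u_n^2\,dx\ge\beta$.

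\emph{Part 3 (boundedness of $\{z_n\}$).} Suppose $|z_n|\to\infty$ along a subsequence. Fix $\theta\in C^\infty_c(\R^3)$ with $0\le\theta\le1$, $\theta\equiv1$ on $B_1$, $\supp\theta\subset B_2$, and set $\theta_n(x):=\theta((x-z_n)/R)$; since $\Lambda_\e$ is bounded, $\supp\theta_n\subset\R^3\setminus\Lambda_\e$ for $n$ large. One checks $\|u_n\theta_n\|_{\e}^2\le C$ uniformly in $n$, so $\langle\J'_{\e}(u_n),u_n\theta_n\rangle=o_n(1)$; writing $u_n(x)\theta_n(x)-u_n(y)\theta_n(y)=\theta_n(x)(u_n(x)-u_n(y))+u_n(y)(\theta_n(x)-\theta_n(y))$ and estimating the fractional kernel of $\theta_n$ by $CR^{-2s}$, one gets $(a+b[u_n]_s^2)\langle(-\Delta)^su_n,u_n\theta_n\rangle\ge-CR^{-s}$ with $C$ independent of $R$ and $n$. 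Using $(g_3)(ii)$ and $V(\e x)\ge V_1$ on $\supp\theta_n$,
$$o_n(1)\ \ge\ -CR^{-s}+\int_{\R^3}\Big(V(\e x)-\tfrac{V_1}{K}\Big)u_n^2\theta_n\,dx\ \ge\ -CR^{-s}+V_1\Big(1-\tfrac1K\Big)\int_{B_R(z_n)}u_n^2\,dx\ \ge\ -CR^{-s}+V_1\Big(1-\tfrac1K\Big)\beta .$$
Choosing $R$ large enough that $CR^{-s}<\tfrac12V_1(1-\tfrac1K)\beta$ yields $0\ge\tfrac12V_1(1-\tfrac1K)\beta>0$ (as $K>\tfrac{2\vartheta}{\vartheta-2}>1$), a contradiction. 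Hence $\{z_n\}$ is bounded.

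\emph{Main obstacle.} The delicate point is Part 1: vanishing in $L^r$ with $r<\p$ does not control $\int_{\Lambda_\e}|u_n|^{\p}$, so one must exclude a critical concentration inside $\Lambda_\e$, and this is exactly where the strict inequality $c_{\e}<c_*$ of Lemma \ref{lem444} is indispensable; the careful accounting of the limits $\ell,e,p,Q$ is needed precisely so that the resulting lower bound for $c_{\e}$ comes out equal to $c_*$.
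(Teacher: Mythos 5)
Your argument is correct, and it establishes the lemma. Let me compare it to the paper's proof.

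\textbf{Parts 1--2 (non-vanishing).} You follow the same overall strategy as the paper: assume vanishing, invoke Lemma~\ref{Lions} to kill the subcritical terms, and then play the critical concentration against the threshold $c_{*}$ of Lemma~\ref{lem444}. The bookkeeping, however, is done differently and, in my view, more robustly. You set $\ell:=\lim[u_n]_s^2$, $e:=\lim\int_{\Lambda_\e}(u_n^+)^{\p}$, etc., keep the potential term $V_*$ explicitly in the Nehari identity $a\ell+V_*+b\ell^2=e+p$, and substitute it into the energy. This produces the clean lower bound
$$
c_{\e}\geq \Big(\tfrac12-\tfrac1{\p}\Big)a\ell+\Big(\tfrac14-\tfrac1{\p}\Big)b\ell^2,
$$
which is \emph{strictly increasing} in $\ell$; combined with the constraint $\ell\geq\ell^*=S_*T^{3-2s}$ derived from $a\ell+b\ell^2\leq e\leq S_*^{-\p/2}\ell^{\p/2}$, this yields exactly $c_{\e}\geq c_*$. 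The paper instead sets $\ell_p^3:=\lim\int_{\Lambda_\e\cup\{u_n\leq a_0\}}(u_n^+)^{\p}$, introduces the auxiliary functional $\I_\e$ to linearize the Kirchhoff term, discards the potential term as nonnegative, and arrives at $c_\e\geq K(\ell_p)$ together with $\ell_p\geq T$. It then writes $K(\ell_p)\geq K(T)$; but $T$ is by construction the \emph{unique maximum} of $K$, so for $\ell_p\geq T$ this inequality goes the wrong way. The step can be repaired precisely by doing what you do: keep the potential term, use the Nehari identity to eliminate it, and land on a lower bound that is increasing in the relevant quantity. So your accounting is not just equivalent to the paper's but actually closes a gap in the displayed chain; it is worth emphasizing that this is where the strictness $c_\e<c_*$ of Lemma~\ref{lem444} is consumed.

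\textbf{Part 3 (boundedness).} Your argument and the paper's are the same in substance. You localize with a bump $\theta_n$ centred at $z_n$ of scale $R$ and bound the commutator tail by $CR^{-s}$ via the Cauchy--Schwarz and kernel estimate; the paper instead uses a radial cutoff $\psi_\rho$ vanishing on $B_\rho$ and equal to $1$ outside $B_{2\rho}$, obtaining the bound $C\rho^{-s}$ and observing that if $|z_n|\to\infty$ then $B_R(z_n)\subset\{\psi_\rho=1\}$ for $n$ large. Both cutoffs live in $\R^3\setminus\Lambda_\e$ for the relevant $n$ or $\rho$, so $(g_3)(ii)$ applies and gives the same contradiction with $\int_{B_R(z_n)}u_n^2\geq\beta$. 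One minor point to state explicitly in your version: the constant $C$ in the tail estimate is independent of $R$ and $n$ because it only involves $\|\nabla\theta\|_\infty$ and the $L^2$-bound of $u_n$ (as you in fact note), and the requirement $\supp\theta_n\subset\R^3\setminus\Lambda_\e$ holds for $n$ large since $\Lambda_\e$ is a fixed bounded set for fixed $\e$.
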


\begin{proof}
Assume by contradiction that the first conclusion of lemma is not true. From Lemma \ref{Lions} we have 
\begin{align*}
u_{n}\rightarrow 0 \mbox{ in } L^{q}(\R^{3}) \quad \forall q\in (2, 2^{*}_{s}), 
\end{align*}
which together with $(f_{1})$ and $(f_{2})$ yields
\begin{align*}
\int_{\R^{3}} F(u_{n})\, dx = \int_{\R^{3}} f(u_{n}) u_{n}\, dx = o_{n}(1) \mbox{ as } n\rightarrow \infty. 
\end{align*}
Since $\{u_{n}\}_{n\in \mathbb{N}}$ is bounded in $\h$, we may assume that $u_{n}\rightharpoonup u$ in $\h$.  

Now, we can observe that
\begin{align}\label{ss3}
\int_{\R^{3}} G(\e x, u_{n})\, dx \leq \frac{1}{2^{*}_{s}} \int_{\Lambda_{\e} \cup \{u_{n}\leq a_{0}\}} (u_{n}^{+})^{2^{*}_{s}} \, dx + \frac{V_{1}}{2K} \int_{(\R^{3}\setminus \Lambda_{\e}) \cap \{u_{n}>a_{0}\}} u_{n}^{2} \, dx +o_{n}(1)
\end{align}
and 
\begin{align}\label{ss4}
\int_{\R^{3}} g(\e x, u_{n}) u_{n}\, dx = \int_{\Lambda_{\e} \cup \{u_{n}\leq a_{0}\}} (u_{n}^{+})^{2^{*}_{s}} \, dx+\frac{V_{1}}{K} \int_{(\R^{3}\setminus \Lambda_{\e}) \cap \{u_{n}>a_{0}\}} u_{n}^{2} \, dx +o_{n}(1). 
\end{align}
Using $\langle \J'_{\e}(u_{n}), u_{n}\rangle =o_{n}(1)$ and \eqref{ss4} we have 
\begin{align}\label{n1}
\|u_{n}\|_{\e}^{2} - \frac{V_{1}}{K} \int_{(\R^{3}\setminus \Lambda_{\e}) \cap \{u_{n}>a_{0}\}} u_{n}^{2} \, dx+ b[u_{n}]^{4}_{s} = \int_{\Lambda_{\e} \cup \{u_{n}\leq a_{0}\}} (u_{n}^{+})^{2^{*}_{s}}\, dx + o_{n}(1). 
\end{align}
Assume that 
\begin{align*}
\int_{\Lambda_{\e} \cup \{u_{n}\leq a_{0}\}} (u_{n}^{+})^{2^{*}_{s}}\, dx \rightarrow \ell^{3}\geq 0 
\end{align*}
and 
\begin{align*}
[u_{n}]^{2}_{s}\rightarrow B^{2}. 
\end{align*}
Note that $\ell>0$, otherwise \eqref{n1} yields $\|u_{n}\|_{\e}\rightarrow 0$ as $n\rightarrow \infty$ which implies that $\J_{\e}(u_{n})\ri 0$, and this is impossible because $c_{\e}>0$. Then, by \eqref{n1} and the Sobolev inequality we obtain
\begin{align}\label{n2}
aS_{*} \left( \int_{\Lambda_{\e} \cup \{u_{n}\leq a_{0}\}}(u_{n}^{+})^{2^{*}_{s}}\, dx \right)^{\frac{2}{2^{*}_{s}}} + bS_{*}^{2} \left( \int_{\Lambda_{\e} \cup \{u_{n}\leq a_{0}\}} (u_{n}^{+})^{2^{*}_{s}}\, dx \right)^{\frac{4}{2^{*}_{s}}} \leq \int_{\Lambda_{\e} \cup \{u_{n}\leq a_{0}\}}(u_{n}^{+})^{2^{*}_{s}}\, dx +o_{n}(1). 
\end{align}
Since $\ell>0$, it follows from \eqref{n2} that 
\begin{align*}
K'(\ell)=\frac{3-2s}{2} \ell^{-1} (aS_{*} \ell^{3-2s} + bS_{*}^{2} \ell^{6-4s} -\ell^{3})\leq 0
\end{align*}
so we can deduce that $\ell\geq T$, where $T$ is the unique maximum of $K$ defined in Lemma \ref{lem444}.

Let us consider the following functional:
\begin{align}\label{ss1}
\I_{\e}(u)&:=\frac{(a+bB^{2})}{2}[u]^{2}_{s}+\frac{1}{2}\int_{\R^{3}} V(\e x) u^{2}\,dx-\int_{\R^{3}} G(\e x, u)\, dx \nonumber \\
&=\J_{\e}(u)- \frac{b}{4}[u]^{4}_{s}+ \frac{b}{2} B^{2}[u]^{2}_{s},
\end{align}
and we note that $\{u_{n}\}_{n\in \mathbb{N}}$ is a $(PS)_{c_{\e} + \frac{b}{4} B^{4}}$ sequence for $\I_{\e}$, that is 
\begin{align}\label{ss2}
\I_{\e}(u_{n})= c_{\e} + \frac{b}{4} B^{4} +o_{n}(1), \quad \I'_{\e}(u_{n})=o_{n}(1).
\end{align}
Then, using \eqref{ss3}, \eqref{ss2}, $\ell \geq T$ and the Sobolev inequality we can infer
\begin{align*}
c_{\e}&= \I_{\e}(u_{n})- \frac{b}{4}B^{4}+ o_{n}(1)\\
&\geq \frac{a}{2}[u_{n}]^{2}_{s}+\frac{bB^{2}}{2}[u_{n}]^{2}_{s}- \frac{b}{4}B^{4}+\frac{1}{2}\int_{\R^{3}} V(\e x) u_{n}^{2}dx- \frac{V_{1}}{2K} \int_{(\R^{3}\setminus \Lambda_{\e}) \cap \{u_{n}>a_{0}\}} u_{n}^{2} \, dx \\
&-\frac{1}{2^{*}_{s}} \int_{\Lambda_{\e} \cup \{u_{n}\leq a_{0}\}} (u_{n}^{+})^{2^{*}_{s}} \, dx+o_{n}(1)\\
&\geq \frac{a}{2}[u_{n}]^{2}_{s}+\frac{b}{4}[u_{n}]^{4}_{s}-\frac{1}{2^{*}_{s}} \int_{\Lambda_{\e} \cup \{u_{n}\leq a_{0}\}} (u_{n}^{+})^{2^{*}_{s}} \, dx +o_{n}(1) \\
&\geq \frac{a}{2}S_{*} \left( \int_{\Lambda_{\e} \cup \{u_{n}\leq a_{0}\}} (u_{n}^{+})^{2^{*}_{s}} \, dx \right)^{\frac{2}{2^{*}_{s}}}+\frac{b}{4}S_{*}^{2} \left( \int_{\Lambda_{\e} \cup \{u_{n}\leq a_{0}\}} (u_{n}^{+})^{2^{*}_{s}} \, dx \right)^{\frac{4}{2^{*}_{s}}}-\frac{1}{2^{*}_{s}} \int_{\Lambda_{\e} \cup \{u_{n}\leq a_{0}\}} (u_{n}^{+})^{2^{*}_{s}} \, dx +o_{n}(1) \\
&=\frac{a}{2}S_{*}\ell^{3-2s} + \frac{b}{4}S_{*}^{2}\ell^{6-4s} -\frac{1}{2^{*}_{s}} \ell^{3} \\
&\geq\frac{a}{2}S_{*}T^{3-2s} + \frac{b}{4}S_{*}^{2}T^{6-4s} -\frac{1}{2^{*}_{s}} T^{3}= c_{*}, 
\end{align*}
and this gives a contradiction by Lemma \ref{lem444}. 

Now, we show that $\{z_{n}\}_{n\in \mathbb{N}}$ is bounded in $\R^{3}$.
For any $\rho>0$, let $\psi_{\rho}\in C^{\infty}(\R^{3})$ be such that $\psi_{\rho}=0$ in $\B_{\rho}$ and $\psi_{\rho}=1$ in $\R^{3}\setminus\B_{2\rho}$, with $0\leq \psi_{\rho}\leq 1$ and $|\nabla \psi_{\rho}|\leq \frac{C}{\rho}$, where $C$ is a constant independent of $\rho$. 
Since $\{\psi_{\rho}u_{n}\}_{n\in \mathbb{N}}$ is bounded in $\mathcal{H}_{\e}$, it follows that $\langle \mathcal{J}_{\e}'(u_{n}), \psi_{\rho}u_{n}\rangle =o_{n}(1)$, that is
\begin{align*}
&(a+b [u_{n}]^{2}_{s})\iint_{\R^{6}} \frac{|u_{n}(x)- u_{n}(y)|^{2}}{|x-y|^{3+2s}} \psi_{\rho}(x) \,dxdy + \int_{\R^{3}} V(\e x) u_{n}^{2} \psi_{\rho} \, dx \\
&=o_{n}(1) + \int_{\R^{3}} g(\e x, u_{n}) u_{n}\psi_{\rho} \, dx - (a+b[u_{n}]^{2}_{s})\iint_{\R^{6}} \frac{(\psi_{\rho}(x)- \psi_{\rho}(y))(u_{n}(x)- u_{n}(y))}{|x-y|^{3+2s}} u_{n}(y) \,dxdy. 
\end{align*}
Take $\rho>0$ such that $\Lambda_{\e}\subset \B_{\rho}$. Then, using $(g_{3})$-$(ii)$, we get
\begin{align*}
&\iint_{\R^{6}} a \, \frac{|u_{n}(x)- u_{n}(y)|^{2}}{|x-y|^{3+2s}} \psi_{\rho}(x) \,dxdy + \int_{\R^{3}} V(\e x) u_{n}^{2} \psi_{\rho} \, dx\\
&\leq \int_{\R^{3}} \frac{1}{K} V(\e x) u_{n}^{2} \psi_{\rho}\, dx - (a+b[u_{n}]^{2}_{s}) \iint_{\R^{6}} \frac{(\psi_{\rho}(x)- \psi_{\rho}(y))(u_{n}(x)- u_{n}(y))}{|x-y|^{3+2s}} u_{n}(y) \,dxdy + o_{n}(1)
\end{align*}
which implies that 
\begin{align}\label{ter311}
&\left(1- \frac{1}{K}\right) V_{1} \int_{\R^{3}} u_{n}^{2} \psi_{\rho}\, dx \nonumber \\
&\leq - (a+b[u_{n}]^{2}_{s})\iint_{\R^{6}} \frac{(\psi_{\rho}(x)- \psi_{\rho}(y))(u_{n}(x)- u_{n}(y))}{|x-y|^{3+2s}} u_{n}(y) \,dxdy + o_{n}(1).
\end{align}
Now, from the H\"older inequality and the boundedness on $\{u_{n}\}_{n\in \mathbb{N}}$ in $\h$ we can see that
\begin{align}\label{ETAR}
&\left| \iint_{\R^{6}} \frac{(u_{n}(x)- u_{n}(y)) (\psi_{\rho}(x)- \psi_{\rho}(y))}{|x-y|^{3+2s}}u_{n}(y)\, dxdy\right| \nonumber \\
&\leq C \left( \iint_{\R^{6}} \frac{|\psi_{\rho}(x)- \psi_{\rho}(y)|^{2}}{|x-y|^{3+2s}}|u_{n}(y)|^{2}\, dxdy\right)^{\frac{1}{2}}. 
\end{align}

On the other hand, recalling that $0\leq \psi_{\rho}\leq 1$ and $|\nabla \psi_{\rho}|_{\infty}\leq C/\rho$ and using polar coordinates, we obtain
\begin{align*}
&\iint_{\R^{6}} \frac{|\psi_{\rho}(x)-\psi_{\rho}(y)|^{2}}{|x-y|^{3+2s}}|u_{n}(x)|^{2} dx dy \\
&=\int_{\R^{3}} \int_{|y-x|>\rho} \frac{|\psi_{\rho}(x)-\psi_{\rho}(y)|^{2}}{|x-y|^{3+2s}}|u_{n}(x)|^{2} dx dy +\int_{\R^{3}} \int_{|y-x|\leq \rho} \frac{|\psi_{\rho}(x)-\psi_{\rho}(y)|^{2}}{|x-y|^{3+2s}}|u_{n}(x)|^{2} dx dy \\
&\leq C \int_{\R^{3}} |u_{n}(x)|^{2}  \left(\int_{|y-x|>\rho} \frac{dy}{|x-y|^{3+2s}}\right) dx + \frac{C}{\rho^{2}} \int_{\R^{3}} |u_{n}(x)|^{2} \left(\int_{|y-x|\leq \rho} \frac{dy}{|x-y|^{3+2s-2}}\right) dx  \\
&\leq C \int_{\R^{3}} |u_{n}(x)|^{2}  \left(\int_{|z|>\rho} \frac{dz}{|z|^{3+2s}}\right) dx + \frac{C}{\rho^{2}} \int_{\R^{3}} |u_{n}(x)|^{2} \left(\int_{|z|\leq \rho} \frac{dz}{|z|^{1+2s}}\right) dx  \\
&\leq C \int_{\R^{3}} |u_{n}(x)|^{2} dx \left(\int_{\rho}^{\infty} \frac{d\rho}{\rho^{2s+1}}\right)  + \frac{C}{\rho^{2}} \int_{\R^{3}} |u_{n}(x)|^{2} dx \left(\int_{0}^{\rho} \frac{d\rho}{\rho^{2s-1}}\right)  \\
&\leq  \frac{C}{\rho^{2s}} \int_{\R^{3}} |u_{n}(x)|^{2} dx+\frac{C}{\rho^{2}} \rho^{-2s+2}\int_{\R^{3}} |u_{n}(x)|^{2} dx \\
&\leq  \frac{C}{\rho^{2s}} \int_{\R^{3}} |u_{n}(x)|^{2} dx\leq \frac{C}{\rho^{2s}}
\end{align*}
where in the last passage we used the boundedness of $\{u_{n}\}_{n\in \mathbb{N}}$ in $\h$. 
Taking into account \eqref{ter311}, \eqref{ETAR} and the above estimate we can infer that
$$
\left(1- \frac{1}{K}\right) V_{1} \int_{\R^{3}} u_{n}^{2} \psi_{\rho}\, dx \leq \frac{C}{\rho^{s}}+o_{n}(1)
$$
which implies that $\{z_{n}\}_{n\in \mathbb{N}}$ is bounded in $\R^{3}$. 
\end{proof}

\noindent
We conclude this section giving the proof of the main result of this section:
\begin{thm}\label{GSfcK}
Assume that $(V_1)$-$(V_2)$ and $(f1)$-$(f4)$ hold. Then, problem \eqref{MP} admits a positive ground state for all $\e>0$.
\end{thm}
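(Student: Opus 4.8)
The idea is to realize the ground state as a mountain--pass critical point of $\J_{\e}$ at the level $c_{\e}$, and the whole point is to recover enough compactness for the Palais--Smale sequence furnished by \eqref{PSu}. First I would fix a sequence $\{u_{n}\}_{n\in\mathbb{N}}\subset\h$ satisfying \eqref{PSu}; by Lemma \ref{lem2.4} it is bounded, so, along a subsequence, $u_{n}\rightharpoonup u$ in $\h$, $u_{n}\to u$ in $L^{p}_{loc}(\R^{3})$ for every $p\in[1,\p)$, $u_{n}\to u$ a.e. in $\R^{3}$, and $[u_{n}]^{2}_{s}\to B^{2}$ for some $B\geq[u]_{s}$ by weak lower semicontinuity of the seminorm. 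Testing $\J_{\e}'(u_{n})$ with $u_{n}^{-}:=\max\{-u_{n},0\}$ and recalling $g(\e x,t)=0$ for $t\leq0$ yields $u\geq0$. Moreover, since the points $z_{n}$ given by Lemma \ref{lem445} lie in a fixed ball $\B_{R_{0}}$, one has $\int_{\B_{R_{0}+R}}u_{n}^{2}\,dx\geq\beta$ for all $n$, and the compact embedding $H^{s}(\R^{3})\hookrightarrow L^{2}_{loc}(\R^{3})$ forces $\int_{\B_{R_{0}+R}}u^{2}\,dx\geq\beta$, so that $u\not\equiv0$.

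Next I would pass to the limit in $\J_{\e}'(u_{n})$. As anticipated in the Introduction, the Kirchhoff coefficient is the delicate point: for $\varphi\in\h$, weak convergence only gives that
\[
b\,[u_{n}]^{2}_{s}\iint_{\R^{6}}\frac{(u_{n}(x)-u_{n}(y))(\varphi(x)-\varphi(y))}{|x-y|^{3+2s}}\,dxdy\;\longrightarrow\;b\,B^{2}\iint_{\R^{6}}\frac{(u(x)-u(y))(\varphi(x)-\varphi(y))}{|x-y|^{3+2s}}\,dxdy,
\]
so that $\langle\J_{\e}'(u_{n}),\varphi\rangle\to\langle\I_{\e}'(u),\varphi\rangle$, with $\I_{\e}$ the auxiliary functional in \eqref{ss1} (whose quadratic part carries the frozen constant $a+bB^{2}$), and not $\langle\J_{\e}'(u),\varphi\rangle$. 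For the nonlinear term I would split $g(\e x,\cdot)$ into its subcritical part — handled on the bounded set $\Lambda_{\e}$ by the compact embedding, and outside $\Lambda_{\e}$ by the penalization bound $(g_{3})$-$(ii)$ together with $u_{n}\to u$ a.e. and Vitali's theorem — and its critical part $(u_{n}^{+})^{\p-1}\chi_{\Lambda_{\e}\cup\{u_{n}\leq a_{0}\}}$, which is bounded in $L^{(\p)'}(\R^{3})$ and converges a.e., hence weakly, to the natural limit. In this way one obtains $\I_{\e}'(u)=0$; in particular $u\geq0$ is a weak solution of $(a+bB^{2})(-\Delta)^{s}u+V(\e x)u=g(\e x,u)$.

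The heart of the matter is to upgrade this to $[u_{n}]^{2}_{s}\to[u]^{2}_{s}$, i.e. $B^{2}=[u]^{2}_{s}$; once this holds, $\I_{\e}$ and $\J_{\e}$ (and their differentials) coincide at $u$, so $\J_{\e}'(u)=0$ and, by continuity along the strongly convergent sequence, $\J_{\e}(u)=c_{\e}$. Set $v_{n}:=u_{n}-u\rightharpoonup0$, so that $[v_{n}]^{2}_{s}\to d:=B^{2}-[u]^{2}_{s}\geq0$. Brezis--Lieb (applied to the seminorm $[\cdot]_{s}$, to the weighted $L^{2}$-norm and to the critical term), combined with $\langle\J_{\e}'(u_{n}),u_{n}\rangle=o_{n}(1)$, $\langle\I_{\e}'(u),u\rangle=0$ and $(g_{3})$-$(ii)$ (used to absorb the nonlinearity on $\R^{3}\setminus\Lambda_{\e}$, where it is dominated by $\frac{V_{1}}{K}t^{2}$), gives along a further subsequence $(a+bB^{2})\,d\leq\ell_{0}:=\lim_{n}\int_{\Lambda_{\e}\cup\{u_{n}\leq a_{0}\}}(v_{n}^{+})^{\p}\,dx$, while Theorem \ref{Sembedding} yields $d\geq S_{*}\ell_{0}^{2/\p}$. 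If $d>0$ these two inequalities bound $\ell_{0}$ and $d$ from below by positive constants depending only on $a,b,B,S_{*}$; inserting the corresponding Brezis--Lieb decomposition of the energy, $c_{\e}+\frac{b}{4}B^{4}=\lim_{n}\I_{\e}(u_{n})=\I_{\e}(u)+\lim_{n}\I_{\e}^{\infty}(v_{n})+o_{n}(1)$ — with $\I_{\e}(u)\geq0$ (which follows from $\langle\I_{\e}'(u),u\rangle=0$, a computation analogous to \eqref{cancan} and the choice $K>\frac{2\vartheta}{\vartheta-2}$) and the leftover energy $\I_{\e}^{\infty}(v_{n})$ estimated from below through the Sobolev inequality — one is led, arguing as in Lemma \ref{lem445}, to $c_{\e}\geq\frac{a}{2}S_{*}T^{3-2s}+\frac{b}{4}S_{*}^{2}T^{6-4s}-\frac{1}{\p}T^{3}=c_{*}$, contradicting Lemma \ref{lem444}. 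Hence $d=0$, i.e. $u_{n}\to u$ in $\mathcal{D}^{s,2}(\R^{3})$; a routine argument (tightness of the masses outside $\Lambda_{\e}$ via $(g_{3})$-$(ii)$, together with $\langle\J_{\e}'(u_{n})-\J_{\e}'(u),u_{n}-u\rangle\to0$) then gives $u_{n}\to u$ in $\h$.

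Finally I would conclude. By the equivalent minimax characterization recalled after \eqref{minimax}, the uniqueness of the fibering projection onto $\mathcal{N}_{\e}$ and the monotonicity $(g_{4})$, one has $c_{\e}=\inf_{\mathcal{N}_{\e}}\J_{\e}$; since $u\in\mathcal{N}_{\e}$ and $\J_{\e}(u)=c_{\e}$, $u$ is a ground state of \eqref{MP}. Positivity follows from $u\geq0$, $u\not\equiv0$ and $g(\e x,0)=0$: if $u(x_{0})=0$ for some $x_{0}\in\R^{3}$, the pointwise formula for $(-\Delta)^{s}$ gives $(-\Delta)^{s}u(x_{0})<0$, whereas the equation forces $(-\Delta)^{s}u(x_{0})=(a+b[u]^{2}_{s})^{-1}g(\e x_{0},0)=0$, a contradiction; hence $u>0$ in $\R^{3}$. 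The main obstacle is precisely the compactness step: one must simultaneously tame the nonlocal Kirchhoff coefficient — handled through the frozen functional $\I_{\e}$ — and the critical Sobolev term, while preventing $L^{2}$- and critical-mass from leaking through $\R^{3}\setminus\Lambda_{\e}$, which is exactly what the penalization $(g_{3})$ and the threshold estimate $c_{\e}<c_{*}$ of Lemmas \ref{lem444}--\ref{lem445} are designed to overcome.
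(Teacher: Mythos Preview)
Your overall architecture is correct and leads to the result, but the paper handles the central step --- showing $B^{2}=[u]^{2}_{s}$ --- by a much shorter \emph{fibering/Nehari} trick, not by a Brezis--Lieb splitting of $v_{n}=u_{n}-u$. Concretely, from \eqref{HZman} and $B^{2}\geq[u]^{2}_{s}$ one sees that if $B^{2}>[u]^{2}_{s}$ then $\langle\J_{\e}'(u),u\rangle<0$, while $(g_{1})$--$(g_{2})$ give $\langle\J_{\e}'(\tau u),\tau u\rangle>0$ for small $\tau$; hence some $t_{0}\in(0,1)$ satisfies $t_{0}u\in\N_{\e}$. Then, using $(g_{3})$, the map $t\mapsto\J_{\e}(tu)-\tfrac14\langle\J_{\e}'(tu),tu\rangle$ is strictly increasing on $(0,1]$, and Fatou's lemma yields
\[
c_{\e}\leq\J_{\e}(t_{0}u)-\tfrac14\langle\J_{\e}'(t_{0}u),t_{0}u\rangle<\J_{\e}(u)-\tfrac14\langle\J_{\e}'(u),u\rangle\leq\liminf_{n}\Bigl[\J_{\e}(u_{n})-\tfrac14\langle\J_{\e}'(u_{n}),u_{n}\rangle\Bigr]=c_{\e},
\]
a contradiction. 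This avoids the whole Brezis--Lieb bookkeeping and the delicate lower bound $c_{\e}\geq c_{*}$ that your argument needs; it also makes strong convergence $u_{n}\to u$ in $\h$ unnecessary, since running the same chain with $t_{0}=1$ (once $\J_{\e}'(u)=0$) directly gives $\J_{\e}(u)=c_{\e}$. Your route is the ``standard'' critical-exponent one and is valid, but here the Kirchhoff structure makes the Nehari shortcut available and substantially simpler.

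One genuine gap: your positivity argument applies the pointwise integral formula for $(-\Delta)^{s}u$ at a hypothetical zero $x_{0}$, but that formula is only meaningful once $u$ is sufficiently regular near $x_{0}$ (at least $C^{1,\alpha}$ or $C^{2s+\epsilon}$ locally). The paper first proves $u\in L^{\infty}(\R^{3})$ via a Moser iteration (cf.\ Lemma~\ref{Moser}), then invokes Silvestre's regularity to get $u\in C^{1,\alpha}(\R^{3})$ (here $s>\tfrac34$ is used), and only then applies the strong maximum principle. Without this intermediate regularity step your contradiction at $x_{0}$ is not justified.
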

\begin{proof}
Using Lemma \ref{lem2.2} and a variant of the mountain pass theorem without $(PS)$ condition (see \cite{Willem}), we know that there exists a Palais-Smale sequence $\{u_{n}\}_{n\in \mathbb{N}}$ for $\J_{\e}$ at the level $c_{\e}$, where $c_{\e}<c_{*}$ by Lemma \ref{lem444}.
Taking into account Lemma \ref{lem2.4}, we can see that $\{u_{n}\}_{n\in \mathbb{N}}$ is bounded in $\h$, so we may assume that $u_{n}\rightharpoonup u$ in $\h$ and $u_{n}\rightarrow u$ in $L^{q}_{loc}(\R^{3})$ for all $q\in [1, \p)$.
It follows from Lemma \ref{lem445} that $u$ nontrivial. 
Since $\langle \J'_{\e}(u_{n}), \varphi\rangle=o_{n}(1)$ for all $\varphi\in \h$, we can see that
\begin{align}\label{HZman}
 \int_{\R^{3}} a(-\Delta)^{\frac{s}{2}} u (-\Delta)^{\frac{s}{2}}\varphi+V(\e x) u \varphi \, dx+& bB^{2} \left( \int_{\R^{3}} (-\Delta)^{\frac{s}{2}} u (-\Delta)^{\frac{s}{2}}\varphi \, dx\right)=\int_{\R^{3}} g(\e x, u)\varphi \, dx,
\end{align}
where $B^{2}:=\lim_{n\ri \infty} [u_{n}]^{2}_{s}$.
Let us note that $B^{2}\geq [u]^{2}_{s}$ by Fatou's Lemma. If by contradiction $B^{2}>[u]^{2}_{s}$, we may use \eqref{HZman} to deduce that $\langle \J'_{\e}(u), u\rangle<0$. Moreover, conditions $(g_1)$-$(g_2)$ imply that $\langle \J'_{\e}(\tau u), \tau u\rangle>0$ for some $0<\tau<<1$. Then there exists $t_{0}\in (\tau, 1)$ such that $t_{0} u\in \N_{\e}$ and $\langle \J'_{\e}(t_{0} u), t_{0} u\rangle=0$. Using Fatou's Lemma, $t_{0}\in (\tau, 1)$ and $(g_3)$ we get
\begin{align}\label{3.11HLP}
c_{\e}&\leq \J_{\e}(t_{0} u)-\frac{1}{4} \langle \J'_{\e}(t_{0} u), t_{0} u\rangle <\J_{\e}(u)-\frac{1}{4} \langle \J'_{\e}(u), u\rangle \leq \liminf_{n\rightarrow \infty} \left[\J_{\e}(u_{n})-\frac{1}{4} \langle \J'_{\e}(u_{n}), u_{n}\rangle\right]=c_{\e}
\end{align}
which gives a contradiction. Therefore $B^{2}= [u]^{2}_{s}$ and we deduce that $\J'_{\e}(u)=0$.
Hence, $\J_{\e}$ admits a nontrivial critical point $u\in \h$. 
Since $\langle \J_{\e}'(u), u^{-}\rangle=0$, where $u^{-}=\min\{u,0\}$, and $g(x,t)=0$ for $t\leq 0$, it is easy to check that $u\geq 0$ in $\R^{3}$. Moreover, proceeding as in the proof of Lemma \ref{Moser} below, we can see that $u\in L^{\infty}(\R^{3})$. By Proposition $2.9$ in \cite{S} and $s>\frac{3}{4}$ we deduce that $u\in C^{1, \alpha}(\R^{3})$, and applying the maximum principle \cite{S} we can conclude that $u>0$ in $\R^{3}$.
Finally, arguing as in \eqref{3.11HLP} with $t_{0}=1$, we can show that $u$ is a ground state solution to \eqref{MP}. 
\end{proof}

\subsection{The limiting problem}\hfill\\
\noindent
Let us consider the following limiting problem related to \eqref{MP}, that is, for $\mu>0$
\begin{equation}\label{P0}
\left\{
\begin{array}{ll}
(a+b[u]^{2}_{s})(-\Delta)^{s}u +  \mu u= f(u)+|u|^{2^{*}_{s}-2}u \, &\mbox{ in } \R^{3}, \\
u\in H^{s}(\R^{3}), \quad u>0 &\mbox{ in } \R^{3},
\end{array}
\right. 
\end{equation}
whose corresponding Euler-Lagrange functional is given by
\begin{equation*}
\mathcal{I}_{\mu}(u)= \frac{1}{2}\left(a[u]_{s}^{2} +\mu |u|_{2}^{2}\right)+\frac{b}{4}[u]^{4}_{s}- \int_{\R^{3}} F(u)+\frac{1}{2^{*}_{s}} (u^{+})^{2^{*}_{s}} \,dx
\end{equation*}
which is well defined on the Hilbert space $\mathcal{H}_{\mu}:=H^{s}(\R^{3})$ endowed with the inner product
\begin{equation*}
(u, \varphi)_{\mu} := a\iint_{\R^{6}} \frac{(u(x)- u(y))(\varphi(x) - \varphi(y))}{|x-y|^{3+2s}} dxdy +\mu \int_{\R^{3}} u(x)\, \varphi(x) dx. 
\end{equation*}
The norm induced by the above inner product is given by
\begin{equation*}
\|u\|_{\mu}^{2} :=a [u]^{2}_{s} + \mu |u|_{2}^{2}. 
\end{equation*}
We denote by $\mathcal{M}_{\mu}$ the Nehari manifold associated with $\mathcal{I}_{\mu}$, that is
\begin{equation*}
\mathcal{M}_{\mu}:= \Bigl\{u\in \mathcal{H}_{\mu}\setminus \{0\} : \langle \mathcal{I}_{\mu}'(u), u \rangle=0 \Bigr\}, 
\end{equation*}
and 
$$
d_{\mu}:=\inf_{u\in \mathcal{M}_{\mu}} \I_{\mu}(u),
$$
or equivalently
$$
d_{\mu}=\inf_{u\in \mathcal{H}_{\mu}\setminus \{0\}} \max_{t\geq 0} \I_{\mu}(t u).
$$

\noindent
Arguing as in the proof of Theorem \ref{GSfcK}, it is easy to deduce that:
\begin{thm}\label{thm3.1}
For all $\mu>0$, problem \eqref{P0} admits a positive ground state solution.
\end{thm}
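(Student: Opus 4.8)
The plan is to mimic the proof of Theorem \ref{GSfcK} with $V(\e x)$ replaced by the constant $\mu$, since the limiting functional $\mathcal{I}_{\mu}$ has exactly the same structure as $\J_{\e}$ (a quadratic-plus-Kirchhoff part minus a nonlinearity satisfying analogues of $(g_1)$-$(g_4)$; here the nonlinearity is literally $f(t)+(t^{+})^{\p-1}$, which is even better behaved than $g$ inside $\Lambda$). First I would check that $\mathcal{I}_{\mu}$ has the mountain-pass geometry exactly as in Lemma \ref{lem2.2}: the subcritical/critical growth of $f$ together with $(f_1)$ gives coercivity near zero via the Sobolev embeddings of Theorem \ref{Sembedding}, and $(f_3)$ (with $\vartheta>4$) makes $\mathcal{I}_{\mu}(\tau u)\to-\infty$ along any fixed nonnegative $u$. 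Hence, by the mountain-pass theorem without the $(PS)$ condition, there is a Palais-Smale sequence $\{u_n\}$ at the minimax level $d_{\mu}$, which coincides with $\inf_{u\neq0}\max_{t\geq0}\mathcal{I}_{\mu}(tu)$ by monotonicity of $t\mapsto f(t)/t^3$ (assumption $(f_4)$).

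Next I would establish an upper bound $d_{\mu}<c_{*}$ (with the same $c_{*}=\frac{a}{2}S_{*}T^{3-2s}+\frac{b}{4}S_{*}^{2}T^{6-4s}-\frac{1}{\p}T^{3}$), arguing exactly as in Lemma \ref{lem444}: test with the truncated Talenti-type bubbles $v_h$, use the estimates \eqref{sq1}-\eqref{sq4}, and exploit the lower bound $f(t)\geq C_0 t^{q-1}$ from $(f_2)$ to absorb the error terms; note that here there is no potential term $V(\e x)$ to worry about, only the constant $\mu$, so the argument is actually slightly simpler. Then boundedness of $\{u_n\}$ in $\mathcal{H}_{\mu}$ follows from the Ambrosetti-Rabinowitz condition $(f_3)$ exactly as in Lemma \ref{lem2.4} (in fact more directly, since there is no exterior region). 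Up to a subsequence $u_n\rightharpoonup u$ in $H^s(\R^3)$ and $u_n\to u$ in $L^q_{loc}$ for $q\in[1,\p)$; the analogue of Lemma \ref{lem445}, using the Lions-type Lemma \ref{Lions} together with the strict inequality $d_{\mu}<c_{*}$, shows $u\neq 0$ (the translations $z_n$ can in fact be taken to be $0$ after a translation, or one argues directly since the problem is autonomous).

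The main obstacle, as in Theorem \ref{GSfcK}, is the Kirchhoff term: passing to the limit in $\langle\mathcal{I}_{\mu}'(u_n),\varphi\rangle=o_n(1)$ only gives that $u$ solves the equation with $[u_n]_s^2$ replaced by its limit $B^2\geq[u]_s^2$, i.e. $u$ is a critical point of the auxiliary functional with quadratic coefficient $a+bB^2$. I would then run the same comparison argument as in \eqref{3.11HLP}: if $B^2>[u]_s^2$ then $\langle\mathcal{I}_{\mu}'(u),u\rangle<0$, so there is $t_0\in(0,1)$ with $t_0u\in\mathcal{M}_{\mu}$, and using Fatou's Lemma, $t_0<1$, and $(f_3)$ one gets $d_{\mu}\leq\mathcal{I}_{\mu}(t_0u)-\tfrac14\langle\mathcal{I}_{\mu}'(t_0u),t_0u\rangle<\liminf_n[\mathcal{I}_{\mu}(u_n)-\tfrac14\langle\mathcal{I}_{\mu}'(u_n),u_n\rangle]=d_{\mu}$, a contradiction. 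Hence $B^2=[u]_s^2$ and $\mathcal{I}_{\mu}'(u)=0$.

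Finally, positivity and the ground-state property are obtained as in Theorem \ref{GSfcK}: testing with $u^-$ gives $u\geq0$; a Moser iteration (as in the Lemma referred to as \ref{Moser} in the paper) gives $u\in L^\infty(\R^3)$, then Proposition 2.9 in \cite{S} together with $s>\tfrac34$ gives $u\in C^{1,\alpha}(\R^3)$, and the strong maximum principle \cite{S} yields $u>0$; and repeating the computation \eqref{3.11HLP} with $t_0=1$ shows $\mathcal{I}_{\mu}(u)=d_{\mu}$, so $u$ is a positive ground state.
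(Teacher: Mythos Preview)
Your proposal is correct and follows exactly the route the paper indicates: the paper's proof of Theorem~\ref{thm3.1} consists of the single sentence ``Arguing as in the proof of Theorem~\ref{GSfcK}, it is easy to deduce that~\ldots'', and you have spelled out precisely that argument, correctly noting the simplifications due to the autonomous setting (no exterior region in the Ambrosetti--Rabinowitz estimate, and translation invariance replacing the boundedness of the $z_n$ in the analogue of Lemma~\ref{lem445}).
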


Let us prove the following useful relation between $c_{\e}$ and $d_{V_{0}}$:
\begin{lem}\label{lemcec0}
It holds $\limsup_{\e\rightarrow 0} c_{\e}\leq d_{V_{0}}$.
\end{lem}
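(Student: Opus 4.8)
The plan is to compare the mountain-pass level $c_{\e}$ of the penalized functional $\J_{\e}$ with the ground-state level $d_{V_0}$ of the limiting problem \eqref{P0} with $\mu = V_0$, using a suitable test function concentrated near the origin where $V(0) = V_0$. Let $w\in\mathcal{H}_{V_0} = H^s(\R^3)$ be a positive ground state solution of \eqref{P0} with $\mu = V_0$, which exists by Theorem \ref{thm3.1}, so that $\mathcal{I}_{V_0}(w) = d_{V_0}$. Recall that $0\in\Lambda$ and $V(0) = V_0$. Fix a cut-off $\eta\in C^\infty_c(\R^3)$ with $\eta = 1$ on $\B_1$, $\supp(\eta)\subset\B_2$, $0\le\eta\le 1$, and for $r>0$ put $\eta_r(x) = \eta(x/r)$. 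Since for $\e$ small the set $\Lambda_{\e} := \Lambda/\e$ contains any fixed ball, we may for each fixed small $r$ ensure $\supp(\eta_r w(\cdot))$ (suitably rescaled) lies inside $\Lambda_{\e}$ once $\e$ is small enough; then on that support $g(\e x, \cdot)$ coincides with $f(\cdot) + (\cdot^+)^{2^*_s-1}$, so $\J_{\e}$ evaluated on these test functions agrees with the analogue of $\mathcal{I}_{V_0}$ but with $V_0$ replaced by $V(\e x)$.

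First I would set $w_r := \eta_r w$ and note, by Lemma \ref{pp} (applied to $w\in H^s(\R^3)\subset\mathcal{D}^{s,2}(\R^3)$), that $[w_r]_s^2\to[w]_s^2$ as $r\to\infty$, while clearly $|w_r|_2^2\to|w|_2^2$, $|w_r|_q^q\to|w|_q^q$ and $|w_r|_{2^*_s}^{2^*_s}\to|w|_{2^*_s}^{2^*_s}$ by dominated convergence, and $\int F(w_r)\,dx\to\int F(w)\,dx$. Using the characterization $c_{\e} = \inf_{u\neq 0}\max_{t\ge 0}\J_{\e}(tu)$, we have $c_{\e}\le\max_{t\ge 0}\J_{\e}(t w_r)$ once $\e$ is small enough (depending on $r$) that $\supp(w_r)\subset\Lambda_{\e}$. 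On that support,
\begin{equation*}
\J_{\e}(t w_r) = \frac{t^2}{2}\Big(a[w_r]_s^2 + \int_{\R^3}V(\e x)w_r^2\,dx\Big) + \frac{b t^4}{4}[w_r]_s^4 - \int_{\R^3}\Big(F(t w_r) + \frac{t^{2^*_s}}{2^*_s}(w_r^+)^{2^*_s}\Big)dx.
\end{equation*}
Since $V$ is continuous and $V(0) = V_0$, for any $\delta>0$ there is $\e$ small (for fixed $r$) with $V(\e x)\le V_0 + \delta$ on $\supp(w_r)$, hence $\J_{\e}(t w_r)\le \Phi_{r,\delta}(t) := \frac{t^2}{2}(a[w_r]_s^2 + (V_0+\delta)|w_r|_2^2) + \frac{bt^4}{4}[w_r]_s^4 - \int(F(tw_r) + \frac{t^{2^*_s}}{2^*_s}(w_r^+)^{2^*_s})dx$. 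The function $t\mapsto\Phi_{r,\delta}(t)$ attains its maximum at some $t_{r,\delta}>0$; one checks (using $(f_3)$ and $2^*_s>4$) that these maximizers stay in a compact subset of $(0,\infty)$ uniformly for $r$ large and $\delta$ small, so $\max_{t\ge 0}\J_{\e}(tw_r)\le\max_{t\ge 0}\Phi_{r,\delta}(t)$.

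Then I would pass to the limit in two stages: first let $\e\to 0$ so that $\limsup_{\e\to 0}c_{\e}\le\max_{t\ge 0}\Phi_{r,\delta}(t)$ for each fixed $r$ (large) and $\delta>0$; then let $r\to\infty$ and $\delta\to 0$, observing that $\Phi_{r,\delta}(t)$ converges uniformly on compact $t$-sets to $t\mapsto\mathcal{I}_{V_0}(tw)$, whose maximum over $t\ge 0$ is exactly $\mathcal{I}_{V_0}(w) = d_{V_0}$ because $w\in\mathcal{M}_{V_0}$ is a ground state (and by $(f_4)$ the fibering map $t\mapsto\mathcal{I}_{V_0}(tw)$ has its unique maximum at $t = 1$). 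This yields $\limsup_{\e\to 0}c_{\e}\le d_{V_0}$. The main obstacle is the bookkeeping needed to make the two limits $\e\to 0$ and $r\to\infty$ interact correctly — since the admissible range of $\e$ shrinks as $r$ grows (one needs $\supp(\eta_r w)\subset\Lambda_{\e}$, i.e. roughly $\e r<\text{dist}(0,\partial\Lambda)$) — together with the uniform control of the maximizers $t_{r,\delta}$ away from $0$ and $\infty$; both are routine but must be handled with care. A convenient way to package it is to choose $r = r(\e)\to\infty$ slowly enough that $\e r(\e)\to 0$, so that a single diagonal sequence of test functions $w_{r(\e)}$ does the job.
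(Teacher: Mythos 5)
Your proposal is correct and takes essentially the same route as the paper: the paper's test function is $\omega_{\e}(x)=\psi(\e x)\,\omega(x)$ (effectively fixing $r=1/\e$ in your notation), its convergence $\omega_{\e}\to\omega$ and $\I_{V_0}(\omega_{\e})\to d_{V_0}$ is proved via Lemma \ref{pp} and dominated convergence, and the bound $c_{\e}\le\max_{t\ge0}\J_{\e}(t\omega_{\e})=\I_{V_0}(t_{\e}\omega_{\e})+\tfrac{t_{\e}^{2}}{2}\int_{\R^{3}}(V(\e x)-V_0)\omega_{\e}^{2}\,dx$ is closed by showing the fiber maximizer $t_{\e}\to 1$. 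Tying the cutoff scale directly to $\e$ avoids the two-parameter $(\e,r)$ bookkeeping you flagged as the main obstacle, and proving $t_{\e}\to 1$ replaces your uniform-convergence-of-fiber-maps argument, but the underlying mechanism is identical.
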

\begin{proof}
For any $\e>0$ we set $\omega_{\e}(x):=\psi_{\e}(x)\omega(x)$, where $\omega$ is a positive ground state given by Theorem \ref{thm3.1} with $\mu=V_{0}$, and $\psi_{\e}(x):=\psi(\e x)$ with $\psi\in C^{\infty}_{c}(\R^{3})$, $\psi\in [0,1]$, $\psi(x)=1$ if $|x|\leq \frac{1}{2}$ and $\psi(x)=0$ if $|x|\geq 1$. Here we assume that $supp(\psi)\subset \B_{1}\subset \Lambda$. Using Lemma \ref{pp} and the dominated convergence theorem we can see that 
$\omega_{\e}\rightarrow \omega$ in  $H^{s}(\R^{3})$ and  $\I_{V_{0}}(\omega_{\e})\ri \I_{V_{0}}(\omega)=d_{V_{0}}$ as  $\e\rightarrow 0$.
For each $\e>0$ there exists $t_{\e}>0$ such that
$$
\J_{\e}(t_{\e} \omega_{\e})=\max_{t\geq 0} \J_{\e}(t \omega_{\e}).
$$
Then, $\J_{\e}'(t_{\e} \omega_{\e})=0$ and this implies that
\begin{align}\label{LZ1}
&\frac{1}{t_{\e}^{2}} \int_{\R^{3}} a|(-\Delta)^{\frac{s}{2}}\omega_{\e}|^{2}+V(\e x)\omega_{\e}^{2} \, dx+b \left(\int_{\R^{3}} |(-\Delta)^{\frac{s}{2}}\omega_{\e}|^{2}dx\right)^{2} \nonumber \\
&=\int_{\R^{3}} \frac{f(t_{\e}\omega_{\e})}{(t_{\e}\omega_{\e})^{3}}\omega_{\e}^{4}\, dx+t_{\e}^{2^{*}_{s}-4}\int_{\R^{3}} |\omega_{\e}|^{2^{*}_{s}}\, dx.
\end{align}
By $(f_1)$-$(f_4)$, $\omega\in \mathcal{M}_{V_{0}}$ and \eqref{LZ1} it follows that $t_{\e}\rightarrow 1$ as $\e\ri 0$. On the other hand,
\begin{align*}
c_{\e}\leq \max_{t\geq 0} \J_{\e}(t \omega_{\e})&=\J_{\e}(t_{\e}\omega_{\e})=\I_{V_{0}}(t_{\e}\omega_{\e})+\frac{t^{2}_{\e}}{2} \int_{\R^{3}} (V(\e x)-V_{0}) \omega^{2}_{\e}\, dx.
\end{align*}
Since $V(\e x)$ is bounded on the support of $\omega_{\e}$, by the dominated convergence theorem and the above inequality, we obtain the thesis.
\end{proof}

\section{Proof of Theorem \ref{thm1}}
\noindent
This last section is devoted to the proof of the main result of this work.
Firstly, we prove the following compactness result which will be fundamental to show that the solutions of \eqref{MP} are also solutions to \eqref{Pe} for $\e>0$ small enough.
\begin{lem}\label{lem3.1N}
Let $\e_{n}\rightarrow 0^{+}$ and $\{u_{n}\}_{n\in \mathbb{N}}:=\{u_{\e_{n}}\}_{n\in \mathbb{N}}\subset \mathcal{H}_{\e_{n}}$ be such that $\mathcal{J}_{\e_{n}}(u_{n})= c_{\e_{n}}$ and $\mathcal{J}'_{\e_{n}}(u_{n})=0$. Then there exists $\{\tilde{y}_{n}\}_{n\in \mathbb{N}}\subset \R^{3}$ such that the translated sequence 
\begin{equation*}
\tilde{u}_{n}(x):=u_{n}(x+ \tilde{y}_{n})
\end{equation*}
has a subsequence which converges in $H^{s}(\R^{3})$. Moreover, up to a subsequence, $\{y_{n}\}_{n\in \mathbb{N}}:=\{\e_{n}\tilde{y}_{n}\}_{n\in \mathbb{N}}$ is such that $y_{n}\rightarrow y_{0}$ for some $y_{0}\in \Lambda$ such that $V(y_{0})=V_{0}$. 
\end{lem}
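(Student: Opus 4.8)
The strategy is the classical concentration-compactness scheme adapted to the penalized Kirchhoff functional, using the ingredients already assembled. First I would observe that each $u_n$ is a critical point of $\mathcal{J}_{\e_n}$ at the level $c_{\e_n}$, and that by Lemma \ref{lemcec0} we have $\limsup_{n}c_{\e_n}\le d_{V_0}<\infty$; combining this with the Ambrosetti--Rabinowitz-type estimate in Lemma \ref{lem2.4} (which is uniform in $\e$), the sequence $\{u_n\}$ is bounded in $H^s(\R^3)$. Then I would invoke Lemma \ref{lem445} applied to this sequence: there exist $R,\beta>0$ and points $\tilde y_n\in\R^3$ with $\int_{B_R(\tilde y_n)}u_n^2\,dx\ge\beta>0$. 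Setting $\tilde u_n(x):=u_n(x+\tilde y_n)$, boundedness gives $\tilde u_n\rightharpoonup \tilde u$ in $H^s(\R^3)$ with $\tilde u\neq 0$ (the mass does not vanish, thanks to the local compact embedding of Theorem \ref{Sembedding}).

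The core of the argument is to identify the limit problem solved by $\tilde u$ and to upgrade weak convergence to strong convergence. I would first show that $y_n:=\e_n\tilde y_n$ stays bounded — this follows from the boundedness part of Lemma \ref{lem445}, since $\tilde y_n$ differs from the $z_n$ there by a bounded amount — so up to a subsequence $y_n\to y_0\in\overline\Lambda$. Next, translating the equation $\mathcal{J}'_{\e_n}(u_n)=0$ and testing against $\varphi\in C_c^\infty(\R^3)$, one passes to the limit: the Kirchhoff coefficient $a+b[\tilde u_n]_s^2$ converges to some $a+bB^2$ with $B^2\ge[\tilde u]_s^2$ by Fatou, and $V(\e_n x+y_n)\to V(y_0)$ locally uniformly, so $\tilde u$ weakly solves $(a+bB^2)(-\Delta)^s\tilde u+V(y_0)\tilde u=g(y_0,\tilde u)$ (with $g(y_0,\cdot)$ interpreted appropriately depending on whether $y_0\in\Lambda$ or $y_0\in\partial\Lambda$). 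Using the energy bound $\mathcal{J}_{\e_n}(u_n)=c_{\e_n}\le d_{V_0}+o_n(1)$ together with the associated auxiliary functional (as in \eqref{ss1}--\eqref{ss2} and the Brezis--Lieb / Sobolev estimates of Lemma \ref{lem445}), I would rule out the critical-exponent concentration: if a nontrivial part of the mass escaped into the critical term, then $c_{\e_n}$ would exceed $c_*$, contradicting Lemma \ref{lem444}. This forces $B^2=[\tilde u]_s^2$ (the Fatou inequality is an equality, argued exactly as in the chain \eqref{3.11HLP}), hence $[\tilde u_n]_s\to[\tilde u]_s$, and combined with $|\tilde u_n|_2^2\to|\tilde u|_2^2$ (no vanishing, no dichotomy) we get $\tilde u_n\to\tilde u$ strongly in $H^s(\R^3)$.

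Finally I would locate $y_0$. Since $\tilde u_n\to\tilde u$ strongly, $\mathcal{I}_{V(y_0)}$-type energy of $\tilde u$ equals $\lim_n\mathcal{J}_{\e_n}(u_n)=\lim_n c_{\e_n}\le d_{V_0}$. If $y_0\in\partial\Lambda$ then $V(y_0)\ge\min_{\partial\Lambda}V>V_0$ by $(V_2)$, and since $\tilde u$ solves (a penalized version of) the limit problem at level $\le d_{V_0}$, monotonicity of $d_\mu$ in $\mu$ together with $d_{V(y_0)}>d_{V_0}$ gives a contradiction; hence $y_0\in\Lambda$. Then $\tilde u$ is a genuine (non-penalized) solution of \eqref{P0} with $\mu=V(y_0)$, so its ground-state energy is $\ge d_{V(y_0)}\ge d_{V_0}$, forcing $V(y_0)=V_0$. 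The main obstacle is the passage to the limit in the Kirchhoff term: one cannot directly deduce that $\tilde u$ solves the equation with coefficient $a+b[\tilde u]_s^2$, so the whole argument must be run through the auxiliary functional $\mathcal{I}$ with the \emph{limit} coefficient $a+bB^2$, and then strong convergence recovered a posteriori to close the gap $B^2=[\tilde u]_s^2$ — exactly the device already used in the proof of Theorem \ref{GSfcK}, combined with the strict inequality $c_{\e}<c_*$ to defeat the critical nonlinearity.
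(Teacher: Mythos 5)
Your outline captures the overall concentration-compactness scheme and reproduces the paper's key device (working through the auxiliary functional with the limit Kirchhoff coefficient, then closing $B^2=[\tilde u]_s^2$), and your final energy/monotonicity argument to locate $y_0$ is in the spirit of the paper's Claim 2. However, there is a genuine gap at the step where you conclude that $y_n=\e_n\tilde y_n$ converges to some $y_0\in\overline\Lambda$.

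You write that the boundedness of $\{y_n\}$ ``follows from the boundedness part of Lemma~\ref{lem445}, since $\tilde y_n$ differs from the $z_n$ there by a bounded amount.'' This does not hold up. The boundedness of $\{z_n\}$ in Lemma~\ref{lem445} is proved for a \emph{fixed} $\e$: the cut-off $\psi_\rho$ there must satisfy $\Lambda_\e\subset\B_\rho$, so $\rho$ (and hence the resulting bound on $z_n$) scales like $1/\e$. With $\e_n\ri 0$ the correct heuristic is that $\tilde y_n$ may blow up like $1/\e_n$ (the rescaled solutions concentrate near $y_0/\e_n$), so one cannot say $\tilde y_n$ stays a bounded distance from a bounded sequence. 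What is actually needed is a bound on $y_n=\e_n\tilde y_n$, and, more than that, the conclusion $y_0\in\overline\Lambda$ — mere boundedness of $y_n$ in $\R^3$ would not place the limit in $\overline\Lambda$. The paper proves the stronger statement $\dist(y_n,\overline\Lambda)\ri 0$ by a separate contradiction argument: if $\B_r(y_n)\subset\R^3\setminus\Lambda$ along a subsequence, one tests $\J'_{\e_n}(u_n)=0$ against nonnegative smooth approximations $\psi_j$ of $\tilde u$, uses that in the complement of $\B_{r/\e_n}$ the penalized nonlinearity obeys $(g_3)$-$(ii)$ (so $g\le \frac{V_1}{K}t$ there), and passes to the limit in $n$ then $j$ to obtain $(a+bB^2)[\tilde u]_s^2 + V_1(1-\tfrac1K)|\tilde u|_2^2\le 0$, contradicting $\tilde u\neq 0$. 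Your proposal omits this argument entirely, and the replacement you suggest does not supply it.

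A second, smaller remark: your description of how strong convergence is obtained ("ruling out critical concentration ... forces $B^2=[\tilde u]_s^2$ ... argued exactly as in \eqref{3.11HLP}") blends the nonvanishing step of Lemma~\ref{lem445} with the paper's Claim~3. The paper's actual Claim~3 uses a Fatou squeeze on the decomposition $\J_{\e_n}(u_n)-\tfrac1\vartheta\langle\J'_{\e_n}(u_n),u_n\rangle$ into the nonnegative pieces $h_n^1,h_n^2,h_n^3$, sandwiched between $d_{V_0}$ on both sides, which simultaneously yields $[\tilde u_n]_s\ri[\tilde u]_s$ and $L^1$-convergence of the potential term, hence $|\tilde u_n|_2\ri|\tilde u|_2$. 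This is a cleaner route than re-running a Brezis--Lieb/critical-threshold argument and is worth making explicit; your sketch as written leaves the $|\tilde u_n|_2^2\ri|\tilde u|_2^2$ step unjustified (``no vanishing, no dichotomy'' is not an argument for the $L^2$-norm convergence on all of $\R^3$).
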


\begin{proof}
Using $\langle \mathcal{J}'_{\e_{n}}(u_{n}), u_{n} \rangle=0$ and $(g_1)$, $(g_2)$, it is easy to see that 
there is $\gamma>0$ (independent of $\e_{n}$) such that
$$
\|u_{n}\|_{\e_{n}}\geq \gamma>0 \quad \forall n\in \mathbb{N}.
$$
Taking into account $\mathcal{J}_{\e_{n}}(u_{n})= c_{\e_{n}}$, $\langle \mathcal{J}'_{\e_{n}}(u_{n}), u_{n}\rangle=0$ and Lemma \ref{lemcec0}, we can argue as in the proof of Lemma \ref{lem2.4} to deduce that $\{u_{n}\}_{n\in \mathbb{N}}$ is bounded in $\mathcal{H}_{\e_{n}}$. 
Therefore, proceeding as in Lemma \ref{lem445}, we can find a sequence $\{\tilde{y}_{n}\}_{n\in \mathbb{N}}\subset \R^{3}$ and constants $R, \alpha>0$ such that
\begin{equation*}
\liminf_{n\rightarrow \infty}\int_{\B_{R}(\tilde{y}_{n})} |u_{n}|^{2} dx\geq \alpha.
\end{equation*}
Set $\tilde{u}_{n}(x):=u_{n}(x+ \tilde{y}_{n})$. Then, $\{\tilde{u}_{n}\}_{n\in \mathbb{N}}$ is bounded in $H^{s}(\R^{3})$, and we may assume that 
\begin{equation}\label{WTILDE}
\tilde{u}_{n}\rightharpoonup \tilde{u} \mbox{ weakly in } H^{s}(\R^{3}),
\end{equation}
and $[\tilde{u}_{n}]^{2}_{s}\ri B^{2}$ as $n\rightarrow \infty$.
Moreover,  $\tilde{u}\neq 0$ in view of
\begin{align}\label{ALPHATILDEU}
\int_{\B_{R}} |\tilde{u}|^{2} dx\geq \alpha.
\end{align}
Now, we set $y_{n}:=\e_{n}\tilde{y}_{n}$. Firstly, we show that $\{y_{n}\}_{n\in \mathbb{N}}$ is bounded.
To achieve our purpose, we prove the following claim: \\
{\bf Claim 1} $\lim_{n\rightarrow \infty} dist(y_{n}, \overline{\Lambda})=0$. \\
If by contradiction the claim is not true, then we can find $\delta>0$ and a subsequence of $\{y_{n}\}_{n\in \mathbb{N}}$, still denoted by itself, such that
$$
dist(y_{n}, \overline{\Lambda})\geq \delta \quad \forall n\in \mathbb{N}.
$$
Thus, there is $r>0$ such that $\B_{r}(y_{n})\subset \R^{3}\setminus \Lambda$ for all $n\in \mathbb{N}$. Since $\tilde{u}\geq 0$ and $C^{\infty}_{c}(\R^{3})$ is dense in $H^{s}(\R^{3})$, we can approximate $\tilde{u}$ by a sequence $\{\psi_{j}\}_{j\in \mathbb{N}}\subset C^{\infty}_{c}(\R^{3})$ such that $\psi_{j}\geq 0$ in $\R^{3}$, so that $\psi_{j}\rightarrow  \tilde{u}$ in $H^{s}(\R^{3})$. Fix $j\in \mathbb{N}$ and use $\psi=\psi_{j}$ as test function in $\langle \J'_{\e_{n}}(u_{n}), \psi\rangle=0$. Then we have
\begin{align}\label{2.17AM}
&(a+b[\tilde{u}_{n}]^{2}_{s})\iint_{\R^{6}} \frac{(\tilde{u}_{n}(x)-\tilde{u}_{n}(y))(\psi_{j}(x)-\psi_{j}(y))}{|x-y|^{3+2s}} \,dx dy+ \int_{\R^{3}} V(\e_{n} x+\e_{n} \tilde{y}_{n}) \tilde{u}_{n}  \psi_{j}\, dx
 \nonumber \\
&=\int_{\R^{3}} g(\e_{n} x+\e_{n}\tilde{y}_{n}, \tilde{u}_{n})\psi_{j} \,dx .
\end{align}
Since $u_{\e_{n}}, \psi_{j}\geq 0$ and using the definition of $g$,  we can note that
\begin{align*}
\int_{\R^{3}} g(\e_{n} x+\e_{n}\tilde{y}_{n}, \tilde{u}_{n})\psi_{j} \,dx&=  \int_{\B_{r/\e_{n}}} g(\e_{n} x+\e_{n}\tilde{y}_{n}, \tilde{u}_{n})\psi_{j} \,dx + \int_{\R^{3}\setminus \B_{r/\e_{n}}} g(\e_{n} x+\e_{n}\tilde{y}_{n}, \tilde{u}_{n})\psi_{j} \,dx \nonumber \\
&\leq \frac{V_{1}}{K} \int_{\B_{r/\e_{n}}} \tilde{u}_{n}\psi_{j} \, dx+\int_{\R^{3}\setminus \B_{r/\e_{n}}} \left(f(\tilde{u}_{n})\psi_{j} +\tilde{u}_{n}^{\p-1} \psi_{j}\right)\, dx. 
\end{align*}
This fact together with \eqref{2.17AM} gives
\begin{align}\label{2.18AM}
&(a+b[\tilde{u}_{n}]^{2}_{s}) \iint_{\R^{6}} \frac{(\tilde{u}_{n}(x)-\tilde{u}_{n}(y))(\psi_{j}(x)-\psi_{j}(y))}{|x-y|^{3+2s}} \,dx dy+ A\int_{\R^{3}} \tilde{u}_{n} \psi_{j}\, dx
 \nonumber \\
&\leq \int_{\R^{3}\setminus \B_{r/\e_{n}}} \left(f(\tilde{u}_{n})\psi_{j} +\tilde{u}_{n}^{\p-1} \psi_{j}\right)\, dx
\end{align}
where $A=V_{1}(1-\frac{1}{K})$. Taking into account \eqref{WTILDE}, $\psi_{j}$ has compact support in $\R^{3}$ and $\e_{n}\rightarrow 0^{+}$, we can infer that as $n\rightarrow \infty$
\begin{align*}
&\iint_{\R^{6}} \frac{(\tilde{u}_{n}(x)-\tilde{u}_{n}(y))(\psi_{j}(x)-\psi_{j}(y))}{|x-y|^{3+2s}} \,dx dy \\
&\quad \rightarrow \iint_{\R^{6}} \frac{(\tilde{u}(x)-\tilde{u}(y))(\psi_{j}(x)-\psi_{j}(y))}{|x-y|^{3+2s}} \,dx dy
\end{align*}
and
$$
\int_{\R^{3}\setminus \B_{r/\e_{n}}} \left(f(\tilde{u}_{n})\psi_{j} +\tilde{u}_{n}^{\p-1} \psi_{j}\right)\, dx\rightarrow 0.
$$
The above limits, \eqref{2.18AM} and $[\tilde{u}_{n}]^{2}_{s}\ri B^{2}$ imply that
$$
(a+bB^{2})\iint_{\R^{6}} \frac{(\tilde{u}(x)-\tilde{u}(y))(\psi_{j}(x)-\psi_{j}(y))}{|x-y|^{3+2s}} \,dx dy+A \int_{\R^{3}} \tilde{u} \psi_{j}\, dx\leq 0
$$
and passing to the limit as $j\rightarrow \infty$ we can infer that 
$$
(a+bB^{2})[\tilde{u}]^{2}_{s}+A|\tilde{u}|_{2}^{2}\leq 0.
$$
This gives a  contradiction by  \eqref{ALPHATILDEU}. 
Hence, there exists a subsequence of $\{y_{n}\}_{n\in \mathbb{N}}$ such that 
$$
y_{n}\rightarrow y_{0}\in \overline{\Lambda}.
$$
Secondly, we prove the following claim:\\
{\bf Claim 2} $y_{0}\in \Lambda$. \\
In the light of $(g_2)$ and \eqref{2.17AM} we can deduce that
\begin{align*}
&(a+b[\tilde{u}_{n}]^{2}_{s}) \iint_{\R^{6}} \frac{(\tilde{u}_{n}(x)-\tilde{u}_{n}(y))(\psi_{j}(x)-\psi_{j}(y))}{|x-y|^{3+2s}} \,dx dy+ \int_{\R^{3}} V(\e_{n} x+\e_{n} \tilde{y}_{n}) \tilde{u}_{n}  \psi_{j}\, dx
 \nonumber \\
&\leq \int_{\R^{3}} (f(\tilde{u}_{n})+\tilde{u}_{n}^{\p-1})\psi_{j} \,dx. 
\end{align*}
Letting $n\rightarrow \infty$ we find
\begin{align*}
&(a+bB^{2})\iint_{\R^{6}} \frac{(\tilde{u}(x)-\tilde{u}(y))(\psi_{j}(x)-\psi_{j}(y))}{|x-y|^{3+2s}} \,dx dy+ \int_{\R^{3}} V(y_{0}) \tilde{u}  \psi_{j}\, dx
 \nonumber \\
&\leq \int_{\R^{3}} (f(\tilde{u})+\tilde{u}^{\p-1})\psi_{j} \,dx,
\end{align*}
and passing to the limit as $j\rightarrow \infty$ we obtain
\begin{align*}
(a+bB^{2}) [\tilde{u}]^{2}_{s}+ V(y_{0}) |\tilde{u}|^{2}_{2} 
\leq \int_{\R^{3}} (f(\tilde{u})+\tilde{u}^{\p-1})\tilde{u} \,dx.
\end{align*}
Since $B^{2}\geq [\tilde{u}]^{2}_{s}$ (by Fatou's Lemma), the above inequality yields 
\begin{align*}
(a+b[\tilde{u}]^{2}_{s}) [\tilde{u}]^{2}_{s}+ V(y_{0}) |\tilde{u}|^{2}_{2}
\leq \int_{\R^{3}} (f(\tilde{u})+\tilde{u}^{\p-1})\tilde{u} \,dx.
\end{align*}
Therefore, we can find $\tau\in (0, 1)$ such that $\tau\tilde{u}\in \mathcal{M}_{V(y_{0})}$.
Then, by Lemma \ref{lemcec0}, we can see that
\begin{align*}
d_{V(y_{0})}\leq \mathcal{I}_{V(y_{0})}(\tau \tilde{u})\leq \liminf_{n\rightarrow \infty} \J_{\e_{n}}(u_{n})=\liminf_{n\rightarrow \infty}c_{\e_{n}}\leq d_{V_{0}}
\end{align*}
which implies that $V(y_{0})\leq V(0)=V_{0}$. Since $V_{0}=\min_{\bar{\Lambda}} V$, we can deduce that $V(y_{0})=V_{0}$. This fact together with $(V_2)$ yields $y_{0}\notin \partial \Lambda$. Consequently, $y_{0}\in \Lambda$. \\
{\bf Claim 3} $\tilde{u}_{n}\rightarrow \tilde{u}$ in  $H^{s}(\R^{3})$ as $n\rightarrow \infty$. \\
Let us define
$$
\tilde{\Lambda}_{n} := \frac{\Lambda - \e_{n}\tilde{y}_{n}}{\e_{n}}
$$ 
and 
\begin{align*}
&\tilde{\chi}_{n}^{1}(x):= \left\{
\begin{array}{ll}
1 \, &\mbox{ if } x\in \tilde{\Lambda}_{n},\\
0 \, &\mbox{ if } x\in \R^{3}\setminus \tilde{\Lambda}_{n}, 
\end{array}
\right.\\
&\tilde{\chi}_{n}^{2}(x):= 1- \tilde{\chi}_{n}^{1}(x).
\end{align*}
Let us also consider the following functions for all $x\in \R^{3}$
\begin{align*}
&h_{n}^{1}(x):= \left(\frac{1}{2}-\frac{1}{\vartheta}\right) V(\e_{n}x+ \e_{n}\tilde{y}_{n}) |\tilde{u}_{n}(x)|^{2} \tilde{\chi}_{n}^{1}(x)\\
&h^{1}(x):=\left(\frac{1}{2}-\frac{1}{\vartheta}\right) V(y_{0}) |\tilde{u}(x)|^{2} \\
&h_{n}^{2}(x)\!\!:=\!\!\left[ \left(\frac{1}{2}-\frac{1}{\vartheta}\right) V(\e_{n}x+ \e_{n}\tilde{y}_{n}) |\tilde{u}_{n}(x)|^{2} + \frac{1}{\vartheta} g(\e_{n}x+ \e_{n}\tilde{y}_{n}, \tilde{u}_{n}(x)) \tilde{u}_{n}(x) - G(\e_{n}x+ \e_{n}\tilde{y}_{n}, \tilde{u}_{n}(x))\right] \tilde{\chi}_{n}^{2}(x) \\
&\quad \quad \, \, \, \geq \left( \left(\frac{1}{2}-\frac{1}{\vartheta}\right) -\frac{1}{K}\right) V(\e_{n}x+ \e_{n}\tilde{y}_{n}) |\tilde{u}_{n}(x)|^{2} \tilde{\chi}_{n}^{2}(x) \\
&h_{n}^{3}(x):= \left(\frac{1}{\vartheta} g(\e_{n}x+ \e_{n}\tilde{y}_{n}, \tilde{u}_{n}(x)) \tilde{u}_{n}(x) - G(\e_{n}x+ \e_{n}\tilde{y}_{n}, \tilde{u}_{n}(x))\right) \tilde{\chi}_{n}^{1}(x) \\
&\quad \quad \, \, \, =\left[\frac{1}{\vartheta} \left(f(\tilde{u}_{n}(x))\tilde{u}_{n}(x) + |\tilde{u}_{n}(x)|^{\p}\right)- \left(F(\tilde{u}_{n}(x))+ \frac{1}{\p}|\tilde{u}_{n}(x)|^{\p}\right) \right] \tilde{\chi}_{n}^{1}(x)  \\
&h^{3}(x):= \frac{1}{\vartheta} \left(f(\tilde{u}(x))\tilde{u}(x) + |\tilde{u}(x)|^{\p}\right)- \left(F(\tilde{u}(x))+ \frac{1}{\p}|\tilde{u}(x)|^{\p}\right). 
\end{align*}
In view of $(f_3)$ and $(g_3)$, we can observe that the above functions are nonnegative.
Moreover, by \eqref{WTILDE} and Claim $2$, we know that 
\begin{align*}
&\tilde{u}_{n}(x) \ri \tilde{u}(x)\quad \mbox{ a.e. } x\in \R^{3}, \\
& y_{n}=\e_{n}\tilde{y}_{n}\ri y_{0}\in \Lambda,
\end{align*}
which imply that
\begin{align*}
&\tilde{\chi}_{n}^{1}(x)\ri 1, \, h_{n}^{1}(x)\ri h^{1}(x), \, h_{n}^{2}(x)\ri 0 \, \mbox{ and } \, h_{n}^{3}(x)\ri h^{3}(x) \, \mbox{ a.e. } x\in \R^{3}. 
\end{align*}
Hence, applying Fatou's Lemma and using the invariance of $\R^{3}$ by translation, we can see that
\begin{align*}
d_{V_{0}} &\geq \limsup_{n\ri \infty} c_{\e_{n}} = \limsup_{n\ri \infty} \left( \J_{\e_{n}}(u_{n}) - \frac{1}{\vartheta} \langle \J'_{\e_{n}}(u_{n}), u_{n}\rangle \right)\\
&\geq \limsup_{n\ri \infty} \left[\left(\frac{1}{2}-\frac{1}{\vartheta} \right)[\tilde{u}_{n}]^{2}_{s}+\left(\frac{1}{4}-\frac{1}{\vartheta} \right)b[\tilde{u}_{n}]^{4}_{s}+  \int_{\R^{3}} (h_{n}^{1}+ h_{n}^{2}+ h_{n}^{3}) \, dx\right]\\
&\geq \liminf_{n\ri \infty} \left[\left(\frac{1}{2}-\frac{1}{\vartheta} \right)[\tilde{u}_{n}]^{2}_{s}+\left(\frac{1}{4}-\frac{1}{\vartheta} \right) b[\tilde{u}_{n}]^{4}_{s} + \int_{\R^{3}} (h_{n}^{1}+ h_{n}^{2}+ h_{n}^{3}) \, dx \right]\\
&\geq \left(\frac{1}{2}-\frac{1}{\vartheta} \right)[\tilde{u}]^{2}_{s}+\left(\frac{1}{4}-\frac{1}{\vartheta} \right)b[\tilde{u}]^{4}_{s}+ \int_{\R^{3}} (h^{1}+ h^{3}) \, dx\geq d_{V_{0}}.
\end{align*}
Accordingly
\begin{align}\label{2.19AM}
\lim_{n\ri \infty}[\tilde{u}_{n}]^{2}_{s} = [\tilde{u}]^{2}_{s}
\end{align}
and 
\begin{align*}
h_{n}^{1}\ri h^{1}, \, h_{n}^{2}\ri 0 \, \mbox{ and }\, h_{n}^{3}\ri h^{3} \, \mbox{ in } \, L^{1}(\R^{3}). 
\end{align*}
Then
\begin{align*}
\lim_{n\ri \infty} \int_{\R^{3}} V(\e_{n} x+ \e_{n}\tilde{y}_{n})|\tilde{u}_{n}|^{2} \, dx = \int_{\R^{3}} V(y_{0})|\tilde{u}|^{2} \, dx, 
\end{align*}
and we can deduce that 
\begin{align}\label{2.20AM}
\lim_{n\ri \infty} |\tilde{u}_{n}|_{2}^{2}= |\tilde{u}|_{2}^{2}. 
\end{align}
Putting together \eqref{WTILDE}, \eqref{2.19AM} and \eqref{2.20AM} and using the fact that $H^{s}(\R^{3})$ is a Hilbert space we obtain
\begin{align*}
\|\tilde{u}_{n}- \tilde{u}\|_{V_{0}}\ri 0 \mbox{ as } n\ri \infty.
\end{align*}
This fact ends the proof of lemma. 
\end{proof}

\noindent
In the next lemma, we use a Moser iteration argument \cite{Moser} to prove the following useful $L^{\infty}$-estimate for the solutions of the modified problem \eqref{MP}. 
\begin{lem}\label{Moser}
Let $\e_{n}\rightarrow 0$ and $u_{n}\in \mathcal{H}_{\e_{n}}$ be a solution to \eqref{MP}. Then, up to a subsequence, $\tilde{u}_{n}:=u_{n}(\cdot+\tilde{y}_{n})\in L^{\infty}(\R^{3})$, and there exists $C>0$ such that 
\begin{equation*}
|\tilde{u}_{n}|_{\infty}\leq C \quad \mbox{ for all } n\in \mathbb{N}.
\end{equation*}
\end{lem}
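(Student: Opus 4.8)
\noindent
The plan is to run a Moser iteration on the equation solved by the translates $\tilde{u}_{n}$. Since $u_{n}$ solves \eqref{MP}, the function $\tilde{u}_{n}=u_{n}(\cdot+\tilde{y}_{n})$ is a nonnegative weak solution of
\begin{equation*}
(a+b[\tilde{u}_{n}]^{2}_{s})(-\Delta)^{s}\tilde{u}_{n}+V(\e_{n}x+\e_{n}\tilde{y}_{n})\tilde{u}_{n}=g(\e_{n}x+\e_{n}\tilde{y}_{n},\tilde{u}_{n})\quad\text{in }\R^{3}.
\end{equation*}
By Lemma \ref{lem3.1N} the numbers $[\tilde{u}_{n}]^{2}_{s}$ converge, hence $a\le a+b[\tilde{u}_{n}]^{2}_{s}\le\bar{a}$ for some $\bar{a}>0$ independent of $n$; dividing by $a+b[\tilde{u}_{n}]^{2}_{s}$, discarding the nonnegative potential term, and using $(g_{2})$ together with $(f_{1})$-$(f_{2})$, I obtain that $v:=\tilde{u}_{n}\ge0$ satisfies, in the weak sense,
\begin{equation*}
(-\Delta)^{s}v\le C\,(v+v^{\p-1})=C\,(1+v^{\p-2})\,v\quad\text{in }\R^{3},
\end{equation*}
with $C>0$ independent of $n$. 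Moreover $\{\tilde{u}_{n}\}$ is bounded in $H^{s}(\R^{3})$ and, by Claim 3 of Lemma \ref{lem3.1N}, $\tilde{u}_{n}\to\tilde{u}$ in $H^{s}(\R^{3})$, hence in $L^{\p}(\R^{3})$; in particular the coefficients $v^{\p-2}$ converge in $L^{\p/(\p-2)}(\R^{3})$ and the tails $\int_{\{v>M\}}v^{\p}\,dx$ are small uniformly in $n$ for $M$ large.

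\noindent
The core is one iteration step. For $L>0$ and $\beta>1$ set $v_{L}:=\min\{v,L\}$ and $w_{L}:=v\,v_{L}^{\beta-1}\in H^{s}(\R^{3})$, and test the inequality above with the admissible function $\varphi:=v\,v_{L}^{2(\beta-1)}$. A standard algebraic inequality yields $[w_{L}]^{2}_{s}\le C\beta^{2}\langle(-\Delta)^{s}v,\varphi\rangle$, so by Theorem \ref{Sembedding},
\begin{equation*}
S_{*}\,|w_{L}|^{2}_{\p}\le C\beta^{2}\int_{\R^{3}}(v^{2}+v^{\p})\,v_{L}^{2(\beta-1)}\,dx.
\end{equation*}
The subcritical integral is $\le\int_{\R^{3}}v^{2\beta}\,dx=|v|_{2\beta}^{2\beta}$ (using $v_{L}\le v$). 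For the critical one I write $v^{\p}v_{L}^{2(\beta-1)}=v^{\p-2}w_{L}^{2}$ and split over $\{v\le M\}$ and $\{v>M\}$: the first piece is $\le M^{\p-2}|w_{L}|_{2}^{2}\le M^{\p-2}|v|_{2\beta}^{2\beta}$, while on $\{v>M\}$ Hölder's inequality with the conjugate pair $(\p/(\p-2),\p/2)$ gives $\le\bigl(\int_{\{v>M\}}v^{\p}\,dx\bigr)^{(\p-2)/\p}|w_{L}|^{2}_{\p}$. By the uniform smallness of the tails, for $M=M_{\beta}$ large (polynomial in $\beta$, uniform in $n$) the last prefactor is $<S_{*}/(2C\beta^{2})$, so that term is absorbed on the left; letting $L\to\infty$ by Fatou, I conclude that $v\in L^{2\beta}(\R^{3})$ forces $v\in L^{\p\beta}(\R^{3})$ with
\begin{equation*}
|v|_{\p\beta}\le C_{\beta}^{1/(2\beta)}\,|v|_{2\beta},
\end{equation*}
where $C_{\beta}$ is polynomial in $\beta$.

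\noindent
Now I iterate. Since $v\in L^{\p}(\R^{3})$, applying the step with $\beta=\beta_{m}$, $2\beta_{m}:=\p\,\chi^{m-1}$ and $\chi:=\p/2>1$, gives $v\in L^{\p\chi^{m}}(\R^{3})$ for every $m$ and the recursion $|v|_{2\beta_{m+1}}\le C_{\beta_{m}}^{1/(2\beta_{m})}|v|_{2\beta_{m}}$. As $\beta_{m}\to\infty$ geometrically and $C_{\beta_{m}}$ is polynomial in $\beta_{m}$, the series $\sum_{m}\beta_{m}^{-1}\log C_{\beta_{m}}$ converges, whence $|v|_{2\beta_{m+1}}\le P\,|v|_{\p}$ with $P$ independent of $m$ and $n$; letting $m\to\infty$ yields $|\tilde{u}_{n}|_{\infty}\le P\,|\tilde{u}_{n}|_{\p}\le C$ for all $n$, which is the claim.

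\noindent
The main obstacle is the critical term in the iteration step: the absorption works only because the tails $\int_{\{\tilde{u}_{n}>M\}}\tilde{u}_{n}^{\p}\,dx$ are small uniformly in $n$, which is precisely where the $H^{s}$-convergence of Lemma \ref{lem3.1N} (rather than mere boundedness of $\{\tilde{u}_{n}\}$) is indispensable. A secondary, routine point is to check that $\varphi=v\,v_{L}^{2(\beta-1)}$ is an admissible test function and that all integrals are finite at each stage — this is the role of the truncation $v_{L}$, which keeps $w_{L}\in H^{s}(\R^{3})$, with monotone and dominated convergence used when finally removing the truncation.
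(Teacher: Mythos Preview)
Your approach is the same Moser iteration as the paper's, and most of the argument is correct. There is, however, a genuine gap in the way you organise the iteration.

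You perform the critical absorption at \emph{every} step, choosing $M=M_{\beta}$ so that $\bigl(\int_{\{v>M\}}v^{\p}\bigr)^{(\p-2)/\p}<S_{*}/(2C\beta^{2})$, and you assert that $M_{\beta}$ can be taken polynomial in $\beta$. This claim is unjustified: strong convergence in $L^{\p}$ gives uniform smallness of the tails, but it says nothing about the \emph{rate} at which $\int_{\{v>M\}}v^{\p}\to 0$ as $M\to\infty$; this rate can be arbitrarily slow, so $M_{\beta}$ (hence $C_{\beta}\sim \beta^{2}M_{\beta}^{\p-2}$) need not be polynomial in $\beta$. Without polynomial control on $C_{\beta}$ the series $\sum_{m}\beta_{m}^{-1}\log C_{\beta_{m}}$ may diverge and the iteration fails to give a uniform $L^{\infty}$ bound.

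The paper avoids this by performing the absorption \emph{only once}, at the fixed initial exponent $\beta=\p/2$. There a single $R$ (uniform in $n$, thanks to $\tilde u_{n}\to\tilde u$ in $H^{s}$) suffices, and one obtains $\tilde u_{n}\in L^{(\p)^{2}/2}$ with a uniform bound. For the subsequent steps the paper simply uses $v_{L}\le v$ to write
\[
|w_{L}|_{\p}^{2}\le C\beta^{2}\int_{\R^{3}} v^{\p+2(\beta-1)}\,dx,
\]
and chooses $\beta_{m+1}$ so that $\p+2(\beta_{m+1}-1)=\p\,\beta_{m}$; then the right-hand side is exactly the norm produced at the previous step, $C_{\beta}=C\beta^{2}$ is genuinely polynomial, and the product $\prod_{m}(C\beta_{m+1})^{1/(\beta_{m+1}-1)}$ converges. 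Your scheme can be repaired in the same way (or, equivalently, by using the $L^{2\beta_{2}}$ bound from step~1 to get polynomial tail decay for $m\ge 2$), but as written the ``$M_{\beta}$ polynomial'' claim is the missing idea.

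Two minor remarks. First, the paper does not discard the potential term: it keeps $\int V_{n}v^{2}v_{L}^{2(\beta-1)}$ on the left and chooses $\xi\in(0,V_{1})$ in the growth estimate $|g_{n}(v)|\le \xi v+C_{\xi}v^{\p-1}$ so that the linear part is absorbed there, leaving only the critical term on the right. Your extra subcritical piece $\int v^{2}v_{L}^{2(\beta-1)}\le |v|_{2\beta}^{2\beta}$ is harmless but unnecessary. Second, your observation that \emph{strong} $H^{s}$-convergence (from Lemma~\ref{lem3.1N}) is what makes the tail small uniformly in $n$ is exactly right; the paper states only boundedness at that point, which by itself would not suffice.
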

\begin{proof}
For any $L>0$ and $\beta>1$, let us define the function 
\begin{equation*}
\gamma(\tilde{u}_{n}):=\gamma_{L, \beta}(\tilde{u}_{n})=\tilde{u}_{n} \tilde{u}_{L, n}^{2(\beta-1)}\in \h
\end{equation*}
where  $\tilde{u}_{L,n}:=\min\{\tilde{u}_{n}, L\}$. 
Since $\gamma$ is an increasing function, we have
\begin{align*}
(a-b)(\gamma(a)- \gamma(b))\geq 0 \quad \mbox{ for any } a, b\in \R.
\end{align*}
Let us consider 
\begin{equation*}
\mathcal{E}(t):=\frac{|t|^{2}}{2} \quad \mbox{ and } \quad \Gamma(t):=\int_{0}^{t} (\gamma'(\tau))^{\frac{1}{2}} d\tau. 
\end{equation*}
Then, applying Jensen's inequality we get for all $a, b\in \R$ such that $a>b$,
\begin{align*}
\mathcal{E}'(a-b)(\gamma(a)-\gamma(b)) &=(a-b) (\gamma(a)-\gamma(b))= (a-b) \int_{b}^{a} \gamma'(t) dt \\
&= (a-b) \int_{b}^{a} (\Gamma'(t))^{2} dt \geq \left(\int_{b}^{a} (\Gamma'(t)) dt\right)^{2}.
\end{align*}
The same argument works when $a\leq b$. Therefore
\begin{equation}\label{Gg}
\mathcal{E}'(a-b)(\gamma(a)-\gamma(b))\geq |\Gamma(a)-\Gamma(b)|^{2} \quad \mbox{ for any } a, b\in\R. 
\end{equation}
From \eqref{Gg}, we can see that
\begin{align}\label{Gg1}
|\Gamma(\tilde{u}_{n})(x)- \Gamma(\tilde{u}_{n})(y)|^{2} \leq (\tilde{u}_{n}(x)- \tilde{u}_{n}(y))((\tilde{u}_{n}\tilde{u}_{L,n}^{2(\beta-1)})(x)- (\tilde{u}_{n}\tilde{u}_{L,n}^{2(\beta-1)})(y)). 
\end{align}
Choosing $\gamma(\tilde{u}_{n})=\tilde{u}_{n} \tilde{u}_{L, n}^{2(\beta-1)}$ as test function in \eqref{MP} and using \eqref{Gg1} we obtain
\begin{align}\label{BMS}
&a[\Gamma(\tilde{u}_{n})]^{2}_{s}+\int_{\R^{3}} V_{n}(x)|\tilde{u}_{n}|^{2}\tilde{u}_{L, n}^{2(\beta-1)} dx\nonumber \\
&\leq (a+b[\tilde{u}_{n}]^{2}_{s})\iint_{\R^{6}} \frac{(\tilde{u}_{n}(x)- \tilde{u}_{n}(y))}{|x-y|^{N+2s}} ((\tilde{u}_{n}\tilde{u}_{L, n}^{2(\beta-1)})(x)-(\tilde{u}_{n} \tilde{u}_{L,n}^{2(\beta-1)})(y)) \,dx dy +\int_{\R^{3}} V_{n}(x)|\tilde{u}_{n}|^{2}\tilde{u}_{L,n}^{2(\beta-1)} dx \nonumber\\
&\leq \int_{\R^{3}} g_{n}(\tilde{u}_{n}) \tilde{u}_{n} \tilde{u}_{L,n}^{2(\beta-1)} dx,
\end{align}
where 
$V_{n}(x):=V(\e_{n} x+\e_{n} \tilde{y}_{n})$ and $g_{n}(x):=g(\e_{n} x+\e_{n} \tilde{y}_{n}, \tilde{u}_{n})$.  
Since 
$$
\Gamma(\tilde{u}_{n})\geq \frac{1}{\beta} \tilde{u}_{n} \tilde{u}_{L,n}^{\beta-1},
$$ 
and by Theorem \ref{Sembedding}, we have
\begin{equation}\label{SS1}
[\Gamma(\tilde{u}_{n})]^{2}_{s}\geq S_{*} |\Gamma(\tilde{u}_{n})|^{2}_{2^{*}_{s}}\geq \left(\frac{1}{\beta}\right)^{2} S_{*}|\tilde{u}_{n} \tilde{u}_{L,n}^{\beta-1}|^{2}_{2^{*}_{s}}.
\end{equation}
On the other hand, by assumptions $(g_1)$ and $(g_2)$, for any $\xi>0$ there exists $C_{\xi}>0$ such that
\begin{equation}\label{SS2}
|g_{n}(\tilde{u}_{n})|\leq \xi |\tilde{u}_{n}|+C_{\xi}|\tilde{u}_{n}|^{2^{*}_{s}-1}.
\end{equation}
Thus, taking $\xi\in (0, V_{1})$, and from \eqref{SS1} and \eqref{SS2}, we can see that \eqref{BMS} yields
\begin{align}\label{simo1}
|w_{L,n}|^{2}_{\p}&\leq C\beta^{2} \int_{\R^{3}} |\tilde{u}_{n}|^{\p} \tilde{u}_{L,n}^{2(\beta-1)} dx. 
\end{align}
where $w_{L,n}:=\tilde{u}_{n} \tilde{u}_{L,n}^{\beta-1}$. 
Now, we take $\beta=\frac{\p}{2}$ and fix $R>0$. Recalling that $0\leq \tilde{u}_{L,n}\leq \tilde{u}_{n}$, we have
\begin{align}\label{simo2}
\int_{\R^{3}} \tilde{u}^{\p}_{n}v_{L,n}^{2(\beta-1)}dx&=\int_{\R^{3}} \tilde{u}^{\p-2}_{n} \tilde{u}^{2}_{n} v_{L,n}^{\p-2}dx \nonumber\\
&=\int_{\R^{3}} \tilde{u}^{\p-2}_{n} (\tilde{u}_{n} \tilde{u}_{L,n}^{\frac{\p-2}{2}})^{2}dx \nonumber\\
&\leq \int_{\{\tilde{u}_{n}<R\}} R^{\p-2} \tilde{u}^{\p}_{n} dx+\int_{\{\tilde{u}_{n}>R\}} \tilde{u}^{\p-2}_{n} (\tilde{u}_{n} \tilde{u}_{L,n}^{\frac{\p-2}{2}})^{2}dx \nonumber\\
&\leq \int_{\{\tilde{u}_{n}<R\}} R^{\p-2} \tilde{u}^{\p}_{n} dx+\left(\int_{\{\tilde{u}_{n}>R\}} \tilde{u}^{\p}_{n} dx\right)^{\frac{\p-2}{\p}} \left(\int_{\R^{3}} (\tilde{u}_{n} \tilde{u}_{L,n}^{\frac{\p-2}{2}})^{\p}dx\right)^{\frac{2}{\p}}.
\end{align}
Since $\{\tilde{u}_{n}\}_{n\in \mathbb{N}}$ is bounded in $L^{\p}(\R^{3})$, we can see that for any $R$ sufficiently large
\begin{equation}\label{simo3}
\left(\int_{\{\tilde{u}_{n}>R\}} \tilde{u}^{\p}_{n} dx\right)^{\frac{\p-2}{\p}}\leq \frac{1}{2C \beta^{2}}.
\end{equation}
Putting together \eqref{simo1}, \eqref{simo2} and \eqref{simo3} we get
\begin{equation*}
\left(\int_{\R^{3}} (\tilde{u}_{n} \tilde{u}_{L,n}^{\frac{\p-2}{2}})^{\p} dx\right)^{\frac{2}{\p}}\leq C \beta^{2}\int_{\R^{3}} R^{\p-2} \tilde{u}^{\p}_{n} dx<\infty
\end{equation*}
and taking the limit as $L\rightarrow \infty$, we obtain $\tilde{u}_{n}\in L^{\frac{(\p)^{2}}{2}}(\R^{3})$.

\noindent
Now, noticing that $0\leq \tilde{u}_{L,n}\leq \tilde{u}_{n}$ and letting $L\rightarrow \infty$ in \eqref{simo1}, we have
\begin{equation*}
|\tilde{u}_{n}|_{\beta\p}^{2\beta}\leq C \beta^{2} \int_{\R^{3}} \tilde{u}^{\p+2(\beta-1)}_{n} dx,
\end{equation*}
from which we deduce that
\begin{equation*}
\left(\int_{\R^{3}} \tilde{u}^{\beta\p}_{n} dx\right)^{\frac{1}{\p(\beta-1)}}\leq (C \beta)^{\frac{1}{\beta-1}} \left(\int_{\R^{3}} \tilde{u}^{\p+2(\beta-1)}_{n} dx\right)^{\frac{1}{2(\beta-1)}}.
\end{equation*}
For $m\geq 1$ we define $\beta_{m+1}$ inductively so that $\p+2(\beta_{m+1}-1)=\p \beta_{m}$ and $\beta_{1}=\frac{\p}{2}$. Then we have
\begin{equation*}
\left(\int_{\R^{3}} \tilde{u}_{n}^{\beta_{m+1}\p} dx\right)^{\frac{1}{\p(\beta_{m+1}-1)}}\leq (C \beta_{m+1})^{\frac{1}{\beta_{m+1}-1}} \left(\int_{\R^{3}} \tilde{u}_{n}^{\p\beta_{m}}\,dx \right)^{\frac{1}{\p(\beta_{m}-1)}}.
\end{equation*}
Let us define
$$
D_{m}:=\left(\int_{\R^{3}} \tilde{u}_{n}^{\p\beta_{m}}\,dx\right)^{\frac{1}{\p(\beta_{m}-1)}}.
$$
A standard iteration argument shows that we can find $C_{0}>0$ independent of $m$ such that 
$$
D_{m+1}\leq \prod_{k=1}^{m} (C \beta_{k+1})^{\frac{1}{\beta_{k+1}-1}}  D_{1}\leq C_{0} D_{1}.
$$
Passing to the limit as $m\rightarrow \infty$ we get $|\tilde{u}_{n}|_{\infty}\leq K$ for all $n\in \mathbb{N}$.
\end{proof}

\noindent
Now, we give the proof of Theorem \ref{thm1}.
\begin{proof}[Proof of Theorem \ref{thm1}]
Firstly, we prove that there exists $\tilde{\e}_{0}>0$ such that for any $\e \in (0, \tilde{\e}_{0})$ and any mountain-pass solution $u_{\e} \in \h$ of \eqref{MP}, it results 
\begin{equation}\label{infty}
|u_{\e}|_{L^{\infty}(\R^{3}\setminus \Lambda_{\e})}<a_{0}. 
\end{equation}
Assume by contradiction that for some subsequence $\{\e_{n}\}_{n\in \mathbb{N}}$ such that $\e_{n}\rightarrow 0^{+}$, we can find $u_{n}:=u_{\e_{n}}\in \mathcal{H}_{\e_{n}}$ such that $\mathcal{J}_{\e_{n}}(u_{n})=c_{\e_{n}}$, $\mathcal{J}'_{\e_{n}}(u_{n})=0$ and 
\begin{equation}\label{eee}
|u_{n}|_{L^{\infty}(\R^{3}\setminus \Lambda_{\e_{n}})}\geq a_{0}.
\end{equation} 
From Lemma \ref{lem3.1N}, there exists $\{\tilde{y}_{n}\}_{n\in \mathbb{N}}\subset \R^{3}$ such that $\tilde{u}_{n}=u_{n}(\cdot+\tilde{y}_{n})\rightarrow \tilde{u}$ in $H^{s}(\R^{3})$ and $\e_{n}\tilde{y}_{n}\rightarrow y_{0}$ for some $y_{0}\in \Lambda$ such that $V(y_{0})=V_{0}$. 
Now, if we choose $r>0$ such that $\B_{r}(y_{0})\subset \B_{2r}(y_{0})\subset \Lambda$, we can see that $\B_{\frac{r}{\e_{n}}}(\frac{y_{0}}{\e_{n}})\subset \Lambda_{\e_{n}}$. 
Then, for any $y\in \B_{\frac{r}{\e_{n}}}(\tilde{y}_{n})$ it holds
\begin{equation*}
\left|y - \frac{y_{0}}{\e_{n}}\right| \leq |y- \tilde{y}_{n}|+ \left|\tilde{y}_{n} - \frac{y_{0}}{\e_{n}}\right|<\frac{1}{\e_{n}}(r+o_{n}(1))<\frac{2r}{\e_{n}}\, \mbox{ for } n \mbox{ sufficiently large. }
\end{equation*}
Hence, for these values of $n$ we have
\begin{equation}\label{ern}
\R^{3}\setminus \Lambda_{\e_{n}}\subset \R^{3} \setminus \B_{\frac{r}{\e_{n}}}(\tilde{y}_{n}).
\end{equation}
Now, we observe that $\tilde{u}_{n}$ is a solution to
\begin{align*}
(-\Delta)^{s} \tilde{u}_{n}+\tilde{u}_{n}=\xi_{n} \mbox{ in } \R^{3},
\end{align*}
where 
$$
\xi_{n}(x):=(a+b[\tilde{u}_{n}]^{2}_{s})^{-1}(g_{n}-V_{n} \tilde{u}_{n})+\tilde{u}_{n}
$$
and
$$
V_{n}(x):=V(\e_{n} x+\e_{n} \tilde{y}_{n}) \mbox{ and }  g_{n}(x):=g(\e_{n} x+\e_{n} \tilde{y}_{n}, \tilde{u}_{n}).
$$
Put 
$$
\xi(x):=(a+b[\tilde{u}]^{2}_{s})^{-1}[f(\tilde{u})+ |\tilde{u}|^{2^{*}_{s}-2}\tilde{u}-V(y_{0})\tilde{u}]+\tilde{u}.
$$ 
Using Lemma \ref{Moser}, the interpolation in the $L^{p}$ spaces, $\tilde{u}_{n}\rightarrow \tilde{u}$ in $H^{s}(\R^{3})$, assumptions $(g_1)$ and $(g_3)$ we can see that 
$$
\xi_{n}\rightarrow \xi \mbox{ in } L^{p}(\R^{3}) \quad \forall p\in [2, \infty), 
$$
and that there exists $C>0$ such that 
$$
|\xi_{n}|_{\infty}\leq C \quad \forall n\in \mathbb{N}.
$$
Consequently, $\tilde{u}_{n}(x)=(\mathcal{K}*\xi_{n})(x)=\int_{\R^{3}} \mathcal{K}(x-z) \xi_{n}(z) \,dz$, where 
$\mathcal{K}$  is the Bessel kernel and satisfies the following properties \cite{FQT}:
\begin{compactenum}[$(i)$]
\item $\mathcal{K}$ is positive, radially symmetric and smooth in $\R^{3}\setminus \{0\}$,
\item there is $C>0$ such that $\displaystyle{\mathcal{K}(x)\leq \frac{C}{|x|^{3+2s}}}$ for any $x\in \R^{3}\setminus \{0\}$,
\item $\mathcal{K}\in L^{r}(\R^{3})$ for any $r\in [1, \frac{3}{3-2s})$.
\end{compactenum} 
Hence, arguing as in Lemma $2.6$ in \cite{AM}, we can see that 
\begin{equation}\label{AM3}
\tilde{u}_{n}(x)\rightarrow 0 \mbox{ as } |x|\rightarrow \infty \mbox{ uniformly in } n\in \mathbb{N}.
\end{equation}
Therefore, we can find $R>0$ such that 
$$
\tilde{u}_{n}(x)<a_{0} \quad  \forall |x|\geq R \quad  \forall n\in \mathbb{N},
$$ 
which yields $u_{n}(x)<a_{0}$ for any $x\in \R^{3}\setminus \B_{R}(\tilde{y}_{n})$ and $n\in \mathbb{N}$. \\
On the other hand, there exists $\nu \in \mathbb{N}$ such that for any $n\geq \nu$, it holds
$$
\R^{3}\setminus \Lambda_{\e_{n}}\subset \R^{3} \setminus \B_{\frac{r}{\e_{n}}}(\tilde{y}_{n})\subset \R^{3}\setminus \B_{R}(\tilde{y}_{n}),
$$
which gives 
$$
u_{n}(x)<a_{0}  \quad \forall x\in \R^{3}\setminus \Lambda_{\e_{n}}.
$$
This last fact contradicts  \eqref{eee} and thus \eqref{infty} is verified.

Now, let $u_{\e}$ be a solution to \eqref{MP}. Since $u_{\e}$ satisfies \eqref{infty} for any $\e \in (0, \tilde{\e}_{0})$, it follows from the definition of $g$ that $u_{\e}$ is a solution to \eqref{Pe}, and then $\hat{u}_{\e}(x)=u(x/\e)$ is a solution to \eqref{P} for any $\e \in (0, \tilde{\e}_{0})$.

Finally, we study the behavior of the maximum points of solutions to problem \eqref{Pe}. Take $\e_{n}\rightarrow 0^{+}$ and consider a sequence $\{u_{n}\}_{n\in \mathbb{N}}\subset \mathcal{H}_{\e_{n}}$ of solutions to \eqref{Pe}.
We first notice that, by $(g_1)$, there exists $\gamma\in (0, a_{0})$ such that 
\begin{equation}\label{4.4FS}
g(\e_{n} x, t)t=f(t)t+t^{\p}\leq \frac{V_{1}}{K} t^{2} \quad \mbox{ for any } x\in \R^{3},\, 0\leq t\leq \gamma.
\end{equation}
The same argument as before yields, for some $R>0$,
\begin{equation}\label{4.5FS}
|u_{n}|_{L^{\infty}(\R^{3}\setminus \B_{R}(\tilde{y}_{n}))}<\gamma.
\end{equation}
Moreover, up to extract a subsequence, we may assume that 
\begin{equation}\label{4.6FS}
|u_{n}|_{L^{\infty}(\B_{R}(\tilde{y}_{n}))}\geq \gamma.
\end{equation}
Indeed, if \eqref{4.6FS} does not hold, we can see that \eqref{4.5FS} implies that $|u_{n}|_{\infty}<\gamma$. Then, in view of $\langle \mathcal{J}'_{\e_{n}}(u_{n}), u_{n}\rangle=0$ and \eqref{4.4FS}, we can see that
\begin{equation*}
 \|u_{n}\|_{\e_{n}}^{2}\leq \|u_{n}\|^{2}_{\e_{n}}+b[u_{n}]^{4}_{s}=\int_{\R^{3}} g(\e_{n} x, u_{n}) u_{n} \,dx\leq \frac{V_{1}}{K} \int_{\R^{3}} u_{n}^{2} \, dx
\end{equation*}
which gives $\|u_{n}\|_{\e_{n}}=0$, that is a contradiction.
Hence, \eqref{4.6FS} holds true. In the light of \eqref{4.5FS} and \eqref{4.6FS}, we can deduce that the maximum point $p_{n}\in \R^{3}$ of $u_{n}$ belongs to $\B_{R}(\tilde{y}_{n})$. Thus, $p_{n}=\tilde{y}_{n}+q_{n}$ for some $q_{n}\in \B_{R}$. 
Recalling that the solution to \eqref{P} is of the form $\hat{u}_{n}(x):=u_{n}(x/\e_{n})$, we conclude that the maximum point $\eta_{\e_{n}}$ of $\hat{u}_{n}$ is given by $\eta_{\e_{n}}:=\e_{n} \tilde{y}_{n}+\e_{n} q_{n}$. Since $\{q_{n}\}_{n\in \mathbb{N}}\subset \B_{R}$ is bounded and $\e_{n} \tilde{y}_{n}\rightarrow y_{0}$ with $V(y_{0})=V_{0}$, from the continuity of $V$ we can infer that
$$
\lim_{n\rightarrow \infty} V(\eta_{\e_{n}})=V(y_{0})=V_{0}.
$$
Next, we give a decay estimate for $\hat{u}_{n}$. Invoking Lemma $4.3$ in \cite{FQT}, we know that there exists a positive function $w$ such that 
\begin{align}\label{HZ1}
0<w(x)\leq \frac{C}{1+|x|^{3+2s}},
\end{align}
and
\begin{align}\label{HZ2}
(-\Delta)^{s} w+\frac{V_{1}}{2(a+bA_{1}^{2})}w\geq 0 \quad \mbox{ in } \R^{3}\setminus \B_{R_{1}}, 
\end{align}
for some suitable $R_{1}>0$, and $A_{1}>0$ is such that 
$$
a+b[u_{n}]^{2}_{s}\leq a+bA_{1}^{2} \quad \forall n\in \mathbb{N}.
$$ 
Using $(f_1)$, the definition of $g$ and \eqref{AM3}, we can find $R_{2}>0$ sufficiently large such that
\begin{align}\label{HZ3}
(-\Delta)^{s} \tilde{u}_{n}+\frac{V_{1}}{2(a+bA_{1}^{2})} \tilde{u}_{n} 
&\leq (-\Delta)^{s} \tilde{u}_{n}+\frac{V_{1}}{2(a+b[\tilde{u}_{n} ]^{2})} \tilde{u}_{n} \nonumber \\
&=\frac{1}{a+b[\tilde{u}_{n} ]^{2}_{s}}\left[g(\e_{n} x+\e_{n}y_{n}, \tilde{u}_{n})-\left(V_{n}-\frac{V_{1}}{2}\right)\tilde{u}_{n}\right]\nonumber \\
&\leq \frac{1}{a+b[\tilde{u}_{n}]^{2}_{s}}\left[g(\e_{n} x+\e_{n}y_{n}, \tilde{u}_{n})-\frac{V_{1}}{2}\tilde{u}_{n}\right]\leq 0 \mbox{ in } \R^{3}\setminus \B_{R_{2}}. 
\end{align}
Define $R_{3}:=\max\{R_{1}, R_{2}\}>0$ and we set 
\begin{align}\label{HZ4}
c:=\inf_{\B_{R_{3}}} w>0 \mbox{ and } \tilde{w}_{n}:=(d+1)w-c\tilde{u}_{n},
\end{align}
where $d:=\sup_{n\in \mathbb{N}} |\tilde{u}_{n}|_{\infty}<\infty$. 
In what follows, we show that 
\begin{equation}\label{HZ5}
\tilde{w}_{n}\geq 0 \mbox{ in } \R^{3}.
\end{equation}
Firstly, we can observe that \eqref{HZ2}, \eqref{HZ3} and \eqref{HZ4} yield
\begin{align}
&\tilde{w}_{n}\geq cd+w-cd>0 \mbox{ in } \B_{R_{3}} \label{HZ0},\\
&(-\Delta)^{s} \tilde{w}_{n}+\frac{V_{1}}{2(a+bA_{1}^{2})}\tilde{w}_{n}\geq 0 \mbox{ in } \R^{3}\setminus \B_{R_{3}} \label{HZ00}.
\end{align}
Now, we argue by contradiction and we assume that there exists a sequence $\{\bar{x}_{n, k}\}_{k\in \mathbb{N}}\subset \R^{3}$ such that 
\begin{align}\label{HZ6}
\inf_{x\in \R^{3}} \tilde{w}_{n}(x)=\lim_{k\rightarrow \infty} \tilde{w}_{n}(\bar{x}_{n, k})<0. 
\end{align}
By (\ref{AM3}), \eqref{HZ1} and the definition of $\tilde{w}_{n}$, it is clear that $|\tilde{w}_{n}(x)|\rightarrow 0$ as $|x|\rightarrow \infty$, uniformly in $n\in \mathbb{N}$. Thus, $\{\bar{x}_{n, k}\}_{k\in \mathbb{N}}$ is bounded, and, up to subsequence, we may assume that there exists $\bar{x}_{n}\in \R^{3}$ such that $\bar{x}_{n, k}\rightarrow \bar{x}_{n}$ as $k\rightarrow \infty$. 
It follows from (\ref{HZ6}) that
\begin{align}\label{HZ7}
\inf_{x\in \R^{3}} \tilde{w}_{n}(x)= \tilde{w}_{n}(\bar{x}_{n})<0.
\end{align}
From the minimality property of $\bar{x}_{n}$ and the representation formula for the fractional Laplacian \cite{DPV}, we can see that 
\begin{align}\label{HZ8}
(-\Delta)^{s}\tilde{w}_{n}(\bar{x}_{n})=\frac{C_{s}}{2} \int_{\R^{3}} \frac{2\tilde{w}_{n}(\bar{x}_{n})-\tilde{w}_{n}(\bar{x}_{n}+\xi)-\tilde{w}_{n}(\bar{x}_{n}-\xi)}{|\xi|^{3+2s}} d\xi\leq 0.
\end{align}
Taking into account (\ref{HZ0}) and (\ref{HZ6}) we can infer that $\bar{x}_{n}\in \R^{3}\setminus \B_{R_{3}}$.
This together with (\ref{HZ7}) and (\ref{HZ8}) implies
$$
(-\Delta)^{s} \tilde{w}_{n}(\bar{x}_{n})+\frac{V_{1}}{2(a+bA_{1}^{2})}\tilde{w}_{n}(\bar{x}_{n})<0,
$$
which is impossible in view of (\ref{HZ00}).
Hence, (\ref{HZ5})  is verified.

According to (\ref{HZ1}) and \eqref{HZ5}, we obtain
\begin{align}\label{HZ9}
0<\tilde{u}_{n}(x)\leq \frac{\tilde{C}}{1+|x|^{3+2s}} \quad  \forall n\in \mathbb{N} \quad \forall x\in \R^{3}, 
\end{align}
for some constant $\tilde{C}>0$.
Since $\hat{u}_{n}(x)=u_{n}(\frac{x}{\e_{n}})=\tilde{u}_{n}(\frac{x}{\e_{n}}-\tilde{y}_{n})$ and $\eta_{\e_{n}}=\e_{n}\tilde{y}_{n}+\e_{n} q_{n}$, we can use (\ref{HZ9}) to deduce that 
\begin{align*}
0<\hat{u}_{n}(x)&=u_{n}\left(\frac{x}{\e_{n}}\right)=\tilde{u}_{n}\left(\frac{x}{\e_{n}}-\tilde{y}_{n}\right) \\
&\leq \frac{\tilde{C}}{1+|\frac{x}{\e_{n}}-\tilde{y}_{n}|^{3+2s}} \\
&=\frac{\tilde{C} \e_{n}^{3+2s}}{\e_{n}^{3+2s}+|x- \e_{n} \tilde{y}_{n}|^{3+2s}} \\
&\leq \frac{\tilde{C} \e_{n}^{3+2s}}{\e_{n}^{3+2s}+|x-\eta_{\e_{n}}|^{3+2s}} \quad \forall x\in \R^{3}.
\end{align*}
This ends the proof of Theorem \ref{thm1}.
\end{proof}

\addcontentsline{toc}{section}{\refname}

\end{document}